\newcommand{\Cay}{\mathrm{Cay}}
\newcommand{\Dih}{\mathrm{Dih}}
\newcommand{\Dic}{\mathrm{Dic}}
\newcommand{\SC}{\mathrm{SC}}
\newtheorem{theorem}{Theorem}[section]
\newtheorem{prop}[theorem]{Proposition}
\newtheorem{lemma}[theorem]{Lemma}
\newtheorem{cor}[theorem]{Corollary}
\newtheorem{example}{Example}
\theoremstyle{definition}
\newtheorem{definition}{Definition}
\numberwithin{equation}{section} %%��ʽ���Ÿ����½�
\def\qed{\hfill$\Box$\vspace{12pt}}
\long\def\delete#1{}
\begin{document}
\title{Fractional revival on semi-Cayley graphs over abelian groups}
\author{Jing Wang$^{a,b,c}$,~Ligong Wang$^{a,b,c}$$^,$\thanks{Supported by the National Natural Science Foundation of China (Nos. 11871398 and 12271439).}~,~Xiaogang Liu$^{a,b,c,}$\thanks{Supported by the Guangdong Basic and Applied
Basic Research Foundation (No. 2023A1515010986).}~$^,$\thanks{Corresponding author. Email addresses: wj66@mail.nwpu.edu.cn, lgwangmath@163.com, xiaogliu@nwpu.edu.cn}
\\[2mm]
{\small $^a$School of Mathematics and Statistics,}\\[-0.8ex]
{\small Northwestern Polytechnical University, Xi'an, Shaanxi 710072, P.R.~China}\\
{\small $^b$Research \& Development Institute of Northwestern Polytechnical University in Shenzhen,}\\[-0.8ex]
{\small Shenzhen, Guangdong 518063, P.R. China}\\
{\small $^c$Xi'an-Budapest Joint Research Center for Combinatorics,}\\[-0.8ex]
{\small Northwestern Polytechnical University, Xi'an, Shaanxi 710129, P.R. China}
}
\date{}

\openup 0.5\jot
\maketitle

\begin{abstract}
In this paper, we investigate the existence of fractional revival on semi-Cayley graphs over finite abelian groups. We give some necessary and sufficient conditions for semi-Cayley graphs over finite abelian groups admitting fractional revival. We also show that integrality is necessary for some semi-Cayley graphs admitting fractional revival. Moreover, we characterize the minimum time when semi-Cayley graphs admit fractional revival. As applications, we give examples of certain Cayley graphs over the generalized dihedral groups and generalized dicyclic groups admitting fractional revival.

\smallskip

\emph{Keywords:} Fractional revival; Semi-Cayley graph; Cayley graph

\emph{Mathematics Subject Classification (2010):} 05C50, 81P68
\end{abstract}

\section{Introduction}
%Continuous-times quantum walk is an essential technology in quantum information and computation. It is an important research content for studying transport problems in quantum spin networks. This was first introduced by Bose in \cite{Bose03} to study state transfer in quantum spin chains. Further works has been described in \cite{AlbCDE04,chris1,chris2}. In \cite{Kay}, Kay given a comprehensive survey on this topic. With the generation of entanglement in quantum spin chains, the transport problem is said to be fractional revival in \cite{GenestVZ16}. In fact, fractional revival is a generalization of two quantum transport phenomena, perfect state transfer and periodicity. It also has important research significance.

Let $\Gamma$ be a graph with vertex set $V(\Gamma)$ and edge set $E(\Gamma)$. The \emph{adjacency matrix} of $\Gamma$ is denoted by $A=A(\Gamma)=(a_{u,v})_{u,v\in V(\Gamma)}$, where $a_{u,v}=1$ if $uv\in E(\Gamma)$, and $a_{u,v}=0$ otherwise. The \emph{transition matrix} \cite{FarhiG98} of $\Gamma$ with respect to $A$ is defined by
$$
H(t) = \exp(\imath tA)=\sum_{k=0}^{\infty}\frac{\imath^{k} A^{k} t^{k}}{k!}, ~ t \in \mathbb{R},~\imath=\sqrt{-1}.
$$
where $\mathbb{R}$ is the set of real numbers.
Let $\mathbb{C}^n$ denote the $n$-dimensional vector space over complex field and $\mathbf{e}_u$ the standard basis vector in $\mathbb{C}^n$ indexed by the vertex $u$.
If $u$ and $v$ are distinct vertices in $\Gamma$ and there is a time $t$ such that
\begin{equation}\label{FRCAY-EQUATION3}
H(t)\mathbf{e}_u=\alpha\mathbf{e}_u+\beta\mathbf{e}_v,
\end{equation}
where $\alpha$ and $\beta$ are complex numbers and $|\alpha|^2+|\beta|^2=1$, then we say that $\Gamma$ admits \emph{fractional revival} from $u$ to $v$ at time $t$ or \emph{$(\alpha, \beta)$-revival} occurs from $u$ to $v$ at time $t$.
In particular, if $\alpha=0$, we say $\Gamma$ admits \emph{perfect state transfer} from $u$ to $v$ at time $t$; and if $\beta=0$, we say $G$ is \emph{periodic} at vertex $u$ at time $t$. We say \emph{$\Gamma$ has $(\alpha, \beta)$-revival} if there is a permutation matrix $Q$ of order two (with no fixed points) and one time $t$ such that
\begin{equation*}\label{ai+bq}
H(t)=\alpha I+\beta Q,
\end{equation*}
where $I$ is the identity matrix. That is, $\Gamma$ has $(\alpha, \beta)$-revival if and only if $(\alpha, \beta)$-revival occurs from every vertex of $\Gamma$ \cite{ChanCCTVZ20}.

%We say that $\Gamma$ admits \emph{fractional revival} from $u$ to $v$ at some time $t$ if for some complex numbers $\alpha$ and $\beta$,
%with $|\alpha|^2+|\beta|^2=1$, we have
%\begin{equation}\label{FRCAY-EQUATION3}
%H(t)\mathbf{e}_u=\alpha\mathbf{e}_u+\beta\mathbf{e}_v,
%\end{equation}
%In this case, we also say that \emph{$(\alpha, \beta)$-revival} occurs from $u$ to $v$ at time $t$.
%In particular, if $\alpha=0$, then we say $\Gamma$ admits \emph{perfect state transfer} from $u$ to $v$ at time $t$; and if $\beta=0$, then $G$ is \emph{periodic} at vertex $u$ at time $t$.
%It was shown in \cite{CCTVZ19} that if $\Gamma$ admits fractional revival from $u$ to $v$, then $\Gamma$ admits fractional revival from  $v$ to $u$ at the same time.
%Thus, without considering $\alpha$ and $\beta$, we also simply say $\Gamma$ admits fractional revival between $u$ and $v$ at time $t$. \xg{Under which condition, can we simply say ...?}

%In fact, this result can be applied to vertex-transitive graphs.
%\cha{}{It has been proved by Cao and Luo in \cite{Caoluo22}. In fact, its proof applies to all that when $\Gamma$ is vertex-transitive graphs.}\xg{The meaning of this sentence is not clear.}

%The main problem is to find graphs admitting fractional revival. Moreover, our goal is to determine the minimum time when graphs admit fractional revival.
%$$T(u,v)=\{t\in \mathbb{R}: t>0, \Gamma \text{~admits~fractional~revival~from}~u~\text{to}~v \},$$
%for any pair of vertices.}\xg{What is the meaning of this paragraph?}.

Quantum state transfer in quantum networks first introduced by Bose in \cite{Bose03} is a very important research content for quantum communication protocols. Up until now, many wonderful results on perfect state transfer have been achieved. See \cite{Basic09, CaoCL20, CaoF21, chris1, chris2, Tan19, CC, Coh14, Coh19, Coutinho15, CoutinhoL2015, CGodsil, Godsil12, GodsilKSS12, LiLZZ21, LiuW2021} for example. However, to the best of our knowledge, only few results on fractional revival have been given, which are listed as follows:
\begin{itemize}
  \item A systematic study of fractional revival at two sites in XX quantum spin chains was given in \cite{GenestVZ16}.
  \item An example of a graph that admits balanced fractional revival between antipodes was given in \cite{BernardCLTV18}.
  \item A generalizations of fractional revival between cospectral and strongly cospectral vertices to arbitrary subsets of vertices, was given in \cite{BernardCLTV18}.
  \item A cycle admits fractional revival if and only if it has four or six vertices, and a path admits fractional revival if and only it has two, three or four vertices \cite{CCTVZ19}.
  \item Fractional revival in graphs, whose adjacency matrices belong to the Bose--Mesner algebra of association schemes, was given in \cite{ChanCCTVZ20}.
  \item An indication on how fractional revival can be swapped with perfect state transfer by modifying chains through isospectral deformations was given in \cite{SVZ22}.
\item A class of graphs admitting fractional revival between non-cospectral vertices was proposed in \cite{GodsilZ22}.
  \item A characterization on Cayley graphs over abelian groups admitting fractional revival was given in \cite{WangWliu22,Caoluo22}.
 \end{itemize}

Let $G$ be a group and $S$ a subset of $G$ with $S=S^{-1}=\left\{s^{-1}| s\in S\right\}$ (inverse-closed). The \emph{Cayley graph} $\Gamma=\Cay(G,S)$ is a graph whose vertex set is $G$ and edge set is $\{\{g,sg\}|s\in S\}$. It is known \cite{Caoluo22} that $(\alpha,\beta)$-revival occurs on $\Gamma=\Cay(G,S)$ from a vertex if and only if $(\alpha, \beta)$-revival occurs from every vertex of $\Gamma=\Cay(G,S)$.

As a generalization of Cayley graphs, de Resmini and Jungnickel \cite{ResJ92} introduced the definition of semi-Cayley graphs.

\begin{definition}
Let $G$ be a finite group with the identity $1$, and let $R$, $L$ and $S$ be subsets of $G$ such that $R$ and $L$ are inverse-closed subsets and $1\notin R,L$.
The \emph{semi-Cayley graph} $\SC(G,R,L,S)$ is an undirected graph with the vertex set $\{(g,0),(g,1)|g\in G\}$ and the edge set consisting of:
 \begin{align*}
 &\left\{\{(g,0),(h,0)\}|hg^{-1}\in R\right\},~~\text{(the~set~of~right~edges),}\\[0.2cm]
 &\left\{\{(g,1),(h,1)\}|hg^{-1}\in L\right\},~~\text{(the~set~of~left~edges),}\\[0.2cm]
 &\left\{\{(g,0),(h,1)\}|hg^{-1}\in S\right\},~~\text{(the~set~of~spoke~edges),}
 \end{align*}
where $\{(g,0)|g\in G\}$ and $\{(g,1)|g\in G\}$ are two orbits of $\SC(G,R,L,S)$.
\end{definition}

The purpose of this paper is to find the existence of fractional revival on semi-Cayley graphs. It is known \cite{AreT13} that every Cayley graph over a group having a subgroup of index $2$ is a semi-Cayley graph. Thus, our results on semi-Cayley graphs can also help us to find the existence of fractional revival on some Cayley graphs over a group having a subgroup of index $2$.

The paper is organized as follows. In Section 2, we introduce some known results on the eigenvalues and the spectral decomposition of the adjacency matrix of a semi-Cayley graph. In Section 3, we give some necessary and sufficient conditions for semi-Cayley graphs admitting fractional revival. In Section 4, we prove that integrality is necessary for semi-Cayley graphs admitting fractional revival. In Section 5, we characterize the minimum time when semi-Cayley graphs admit fractional revival. In Section 6, we give examples of certain Cayley graphs over the generalized dihedral groups and generalized dicyclic groups admitting fractional revival.

\section{Preliminaries}
In this section, we give some notions, notations and helpful results used in this paper.

\subsection{Characters of finite abelian groups}

Let $\mathbb{C}$ and $\mathbb{Z}$ denote the set of complex numbers and integer numbers, respectively.
Let $G$ be a finite abelian group of order $n$. It is well-known that $G$ can be decomposed as a direct sum of cyclic groups:
 $$G=\mathbb{Z}_{n_1}\otimes \mathbb{Z}_{n_2}\otimes\cdots\otimes \mathbb{Z}_{n_r}~(n_s\geq2),$$
where $\mathbb{Z}_m=(\mathbb{Z}/m\mathbb{Z},+)$ is a cyclic group of order $m$.
 For every $z=(z_1,\ldots, z_r)\in G$, $(z_s\in \mathbb{Z}_{n_s})$, the mapping
 \begin{equation}\label{character}
 \chi_z: G\rightarrow \mathbb{C}, ~\chi_z(g)=\prod_{s=1}^r\omega_{n_s}^{z_sg_s}, ~(\text{for~} g=(g_1,\ldots,g_r)\in G)
 \end{equation}
is a \emph{character} of $G$, where $\omega_{n_s}=\exp(2\pi \mathrm{i}/n_s)$ is a primitive $n_s$-th root of unity in $\mathbb{C}$. It is easy to verify that
$$
\chi_z(g)=\chi_g(z) \text{~~for~all~}z,g\in G.
$$
For a character $\chi_z$ and a subset $X$ of $G$, we denote
$$
\chi_z(X):=\sum\limits_{x\in X}\chi_z(x).
$$

All characters of $G$ form a group $\hat{G}=\{\chi_z|z\in G\}$ that is isomorphic to $G$ under the operation
$\chi_z\chi_g(x):=\chi_z(x)\chi_g(x)$, where $g,h,x\in G$. The identity of the group $\hat{G}$ is the principal character $\chi_1$ that maps every element of $G$ to $1$.
Moreover, all characters $\chi\in \hat{G}$ satisfy the ``First orthogonality relations'' in \cite{Steinberg12}, that is,
\begin{equation}\label{Sum=0-0}
\frac{1}{n}\sum\limits_{g\in G}\chi_i(g)\overline{\chi_j(g)}=\left\{
\begin{array}{cc}
1,    &i=j; \\[0.2cm]
0,    &i\neq j.
\end{array}\right.
\end{equation}
where $\overline{\chi_j(g)}$ denotes the complex conjugate of $\chi_j(g)$. Thus,
\begin{equation}\label{Sum=0-1}
 \sum\limits_{g\in G, ~i\not=1}\chi_i(g)=\sum\limits_{g\in G, ~j\not=1}\overline{\chi_j(g)}=0.
\end{equation}

%and the $\delta_{i,j}$ denotes the function $\delta_{i,j}=1$ if $i=j$, and $\delta_{i,j}=0$ if $i\neq j$.

Let $f:G\rightarrow \mathbb{C}$ be a complex-valued function on $G$. The Fourier transform $\hat{f}:\hat{G}\rightarrow \mathbb{C}$ is defined by
$$\hat{f}(\chi)=\sum\limits_{g\in G} f(g)\overline{\chi(g)},$$
then the Fourier inversion \cite{Steinberg12} is
$$f=\frac{1}{n}\sum\limits_{\chi\in \hat{G}}\hat{f}(\chi)\chi.$$

\subsection{Spectral decomposition of semi-Cayley graphs}

Let $\Gamma$ be a graph with adjacency matrix $A$.  The eigenvalues of $A$ are called the \emph{eigenvalues} of $\Gamma$. Suppose that $ \lambda_1\geq\lambda_2\geq\cdots\geq\lambda_n$ ($\lambda_i$ and $\lambda_j$ may be equal) are all eigenvalues of $\Gamma$  and $\xi_j$ is the eigenvector associated with $\lambda_{j}$, $j=1,2,\ldots,n$.  Let $\mathbf{x}^H$ denote the conjugate transpose of a column vector $\mathbf{x}$. Then, for each eigenvalue $\lambda_j$ of $\Gamma$, define
$$
E_{\lambda_j} =\mathbf{\xi}_j (\mathbf{\xi}_j)^H,
$$
which is usually called the \emph{eigenprojector} corresponding to  $\lambda_j$ of $G$. Note that $\sum_{j=1}^nE_{\lambda_j}=I_n$ (the identity matrix of order $n$). Then
\begin{equation}\label{spect1}
A=A\sum_{j=1}^nE_{\lambda_j} =\sum_{j=1}^nA\mathbf{\xi}_j (\mathbf{\xi}_j)^H  =\sum_{j=1}^n\lambda_j\mathbf{\xi}_j (\mathbf{\xi}_j)^H  =\sum_{j=1}^{n}\lambda_jE_{\lambda_j},
\end{equation}
which is called the \emph{spectral decomposition of $A$ with respect to the eigenvalues}  (see ``Spectral Theorem for Diagonalizable Matrices'' in \cite[Page 517]{MAALA}). Note that $E_{\lambda_j}^{2}=E_{\lambda_j}$ and $E_{\lambda_j}E_{\lambda_h}=\mathbf{0}$ for $j\neq h$, where $\mathbf{0}$ denotes the zero matrix. So, by (\ref{spect1}), we have
\begin{equation*}\label{SpecDec2-1}
H(t)=\sum_{k\geq 0}\dfrac{\imath^{k}A^{k}t^{k}}{k!}=\sum_{k\geq 0}\dfrac{\imath^{k}\left(\sum\limits_{j=1}^{n}\lambda_{j}^{k}E_{\lambda_j}\right)t^{k}}{k!} =\sum_{j=1}^{n}\exp(\imath t\lambda_{j})E_{\lambda_j}.
\end{equation*}

In \cite{Are22}, Arezoomand gave the spectral decomposition of the adjacency matrix of a semi-Cayley graph over an abelian group. Before to present this result, we follow some symbols used in \cite{Are22}, which are stated as follows.

Let $a_z^+, a_z^-, b_z^+, b_z^-~(z\in G)$ be complex numbers such that
\begin{equation}\label{abpm}
\begin{split}
&b_z^\pm\chi_z(S)=\frac{-x_z\pm\sqrt{x_z^2+4|\chi_z(S)|^2}}{2}a_z^\pm,\\[0.2cm]
&a_z^\pm\overline{\chi_z(S)}=\frac{x_z\pm\sqrt{x_z^2+4|\chi_z(S)|^2}}{2}b_z^\pm,
\end{split}
\end{equation}
where $\overline{\chi_z(S)}$ denotes the complex conjugate of $\chi_z(S)$ and $$x_z=\chi_z(R)-\chi_z(L).$$
It is known \cite{JamLi01} that $\chi_z(R)$ and $\chi_z(L)$ are real numbers, since $R$ and $L$ are inverse-closed. Thus $x_z\in \mathbb{R}$. If $\chi_z(S)=0$, then assume that $(a_z^+, b_z^+)=(1,0)$ and $(a_z^-,b_z^-)=(0,1)$.

\begin{lemma}\emph{(see \cite[Section 3]{Are22})}\label{SC-Eigen-111}
Let $\Gamma=\SC(G,R,L,S)$ be a semi-Cayley graph over an abelian group $G=\{g_1,g_2,\ldots, g_n\}$. Let $a_z^\pm, b_z^\pm$ be defined by (\ref{abpm}) and $\hat{G}=\{\chi_z|z\in G\}$. Then
\begin{itemize}
\item[\rm(a)]the eigenvalues of $\Gamma$ are
\begin{equation*}
\lambda_z^\pm=\frac{1}{2}\left(\chi_z(R)+\chi_z(L)\pm\sqrt{(\chi_z(R)-\chi_z(L))^2+4|\chi_z(S)|^2}\right),~z\in G;
\end{equation*}
\item[\rm(b)]the eigenvectors of $\Gamma$ associated with the eigenvalue $\lambda_z^\pm$ are
\begin{equation*}
\xi_z^\pm=\frac{1}{\sqrt{n(|a_z^\pm|^2+|b_z^\pm|^2)}}\left(a_z^\pm\chi_z(g_1^{-1}), \ldots a_z^\pm\chi_z(g_n^{-1}),b_z^\pm\chi_z(g_1^{-1}),\ldots, b_z^\pm\chi_z(g_n^{-1})  \right)^\top,
\end{equation*}
where $\mathbf{x}^\top$ denotes the transpose of a column vector $\mathbf{x};$
\item[\rm(c)] the eigenprojector of $\Gamma$ associated with the eigenvalue $\lambda_z^\pm$ are
\begin{equation*}
E_{\lambda_z}^\pm:=\xi_z^\pm\cdot(\xi_z^\pm)^H=\frac{1}{n(|a_z^\pm|^2+|b_z^\pm|^2)}\left(
\begin{array}{cc}
|a_z^\pm|^2F                      &a_z^\pm\overline{b_z^\pm}F\\[0.2cm]
b_z^\pm\overline{a_z^\pm}F        &|b_z^\pm|^2F
\end{array}
\right),~z\in G,
\end{equation*}
where $F=(\chi_z(g_r^{-1}g_s))_{1\leq r,s\leq n}$.
\end{itemize}
Moreover, the spectral decomposition of the adjacency matrix $A$ of $\Gamma$ is
$$A=\sum\limits_{z\in G}\sum\limits_\pm\lambda_z^\pm E_{\lambda_z}^\pm,
\text{~and~}H(t)=\sum\limits_{z\in G}\sum\limits_\pm\exp(\imath \lambda_z^\pm t)E_{\lambda_z}^\pm.$$
\end{lemma}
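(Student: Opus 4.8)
The plan is to read off the eigendata from the natural $2\times 2$ block form of $A$. Order the vertices so that coordinates $1,\dots,n$ correspond to $(g_1,0),\dots,(g_n,0)$ and coordinates $n+1,\dots,2n$ to $(g_1,1),\dots,(g_n,1)$; then
$$A=\begin{pmatrix} M_R & M_S\\[2pt] M_S^{\top} & M_L\end{pmatrix},$$
where $M_R,M_L$ are the adjacency matrices of the Cayley graphs $\Cay(G,R),\Cay(G,L)$, i.e. $(M_R)_{r,s}=1$ iff $g_sg_r^{-1}\in R$ and $(M_L)_{r,s}=1$ iff $g_sg_r^{-1}\in L$, and $(M_S)_{r,s}=1$ iff $g_sg_r^{-1}\in S$. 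For each $z\in G$ put $v_z=\bigl(\chi_z(g_1^{-1}),\dots,\chi_z(g_n^{-1})\bigr)^{\top}\in\mathbb{C}^n$. The first step is the routine verification that $v_z$ is a common eigenvector of $M_R$, $M_L$, $M_S$ and $M_S^{\top}$: substituting $g_s=ug_r$ and using that $R$ and $L$ are inverse-closed together with $\chi_z(u^{-1})=\overline{\chi_z(u)}$, one obtains $M_Rv_z=\chi_z(R)v_z$, $M_Lv_z=\chi_z(L)v_z$, $M_Sv_z=\overline{\chi_z(S)}\,v_z$ and $M_S^{\top}v_z=\chi_z(S)\,v_z$. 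Since $R,L$ are inverse-closed, $\chi_z(R),\chi_z(L)\in\mathbb{R}$, so $x_z=\chi_z(R)-\chi_z(L)\in\mathbb{R}$ and all radicands below are nonnegative.

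Next I would localise to the two-dimensional subspace $W_z=\mathrm{span}\{(v_z;0),(0;v_z)\}\subseteq\mathbb{C}^{2n}$, where $(x;y)$ denotes the block column with top block $x$ and bottom block $y$. By the previous paragraph $W_z$ is $A$-invariant, and in the ordered basis $\{(v_z;0),(0;v_z)\}$ the operator $A|_{W_z}$ is the $2\times 2$ Hermitian matrix
$$B_z=\begin{pmatrix}\chi_z(R) & \overline{\chi_z(S)}\\[2pt] \chi_z(S) & \chi_z(L)\end{pmatrix}.$$
Its trace equals $\chi_z(R)+\chi_z(L)$ and its determinant equals $\chi_z(R)\chi_z(L)-|\chi_z(S)|^2$, so its two (real) eigenvalues are exactly the $\lambda_z^{\pm}$ of part (a); and since $\lambda_z^{\pm}-\chi_z(R)=\tfrac12\bigl(-x_z\pm\sqrt{x_z^2+4|\chi_z(S)|^2}\bigr)$ and $\lambda_z^{\pm}-\chi_z(L)=\tfrac12\bigl(x_z\pm\sqrt{x_z^2+4|\chi_z(S)|^2}\bigr)$, writing the eigenvector equation $B_z(a_z^{\pm},b_z^{\pm})^{\top}=\lambda_z^{\pm}(a_z^{\pm},b_z^{\pm})^{\top}$ out row by row reproduces the defining relations (\ref{abpm}) (the degenerate case $\chi_z(S)=0$, in which $B_z$ is diagonal, being exactly the convention $(a_z^{\pm},b_z^{\pm})=(1,0),(0,1)$). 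Pulling an eigenpair of $B_z$ back to $W_z$ gives $A(a_z^{\pm}v_z;\,b_z^{\pm}v_z)=\lambda_z^{\pm}(a_z^{\pm}v_z;\,b_z^{\pm}v_z)$, and since $\|v_z\|^2=\sum_r|\chi_z(g_r^{-1})|^2=n$ the normalisation produces the vector $\xi_z^{\pm}$ of part (b).

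It then remains to assemble these into a spectral decomposition. The character orthogonality relation (\ref{Sum=0-0}) gives $v_w^{H}v_z=n\,[z=w]$, so the subspaces $W_z$, $z\in G$, are pairwise orthogonal; being $2$-dimensional and $n$ in number they yield the orthogonal decomposition $\mathbb{C}^{2n}=\bigoplus_{z\in G}W_z$, and inside each $W_z$ the vectors $\xi_z^{+},\xi_z^{-}$ are orthonormal, being normalised eigenvectors of the Hermitian matrix $B_z$. Hence $\{\xi_z^{\pm}:z\in G\}$ is an orthonormal eigenbasis of $A$, the matrices $E_{\lambda_z}^{\pm}=\xi_z^{\pm}(\xi_z^{\pm})^{H}$ are pairwise orthogonal idempotents with $\sum_{z\in G}\sum_{\pm}E_{\lambda_z}^{\pm}=I_{2n}$, and therefore $A=\sum_{z\in G}\sum_{\pm}\lambda_z^{\pm}E_{\lambda_z}^{\pm}$ exactly as in (\ref{spect1}); the stated formula for $H(t)$ then follows term by term as in the computation preceding the lemma. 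Finally, the closed form in part (c) is obtained by expanding $\xi_z^{\pm}(\xi_z^{\pm})^{H}$ in $2\times 2$ block form and noting that $v_zv_z^{H}=\bigl(\chi_z(g_r^{-1})\overline{\chi_z(g_s^{-1})}\bigr)_{1\le r,s\le n}=\bigl(\chi_z(g_r^{-1}g_s)\bigr)_{1\le r,s\le n}=F$.

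The computations are routine; the point that needs care is the bookkeeping of complex conjugates separating the action of $M_S$ from that of $M_S^{\top}$, i.e. placing the off-diagonal entries of $B_z$ correctly so that its row-wise eigenvector equations match (\ref{abpm}) --- with the opposite convention this amounts to replacing $S$ by $S^{-1}$, which, $G$ being abelian, gives the isomorphic graph $\SC(G,R,L,S^{-1})$ and changes nothing. The remaining wrinkles are the small degenerate subcases: $\chi_z(S)=0$, and $\lambda_z^{+}=\lambda_z^{-}$ (which forces $x_z=0$ and $\chi_z(S)=0$), where one simply notes that any orthonormal eigenbasis of $B_z$ serves and the sum of the two corresponding rank-one projectors is the orthogonal projection onto $W_z$; and the possibility $\lambda_z^{\epsilon}=\lambda_w^{\epsilon'}$ with $z\ne w$, in which case $E_{\lambda_z}^{\pm}$ is rank one rather than the full spectral idempotent of that eigenvalue, but the identities $A=\sum\lambda_z^{\pm}E_{\lambda_z}^{\pm}$ and $H(t)=\sum e^{\imath t\lambda_z^{\pm}}E_{\lambda_z}^{\pm}$ remain valid.
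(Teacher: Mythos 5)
The paper itself gives no proof of this lemma (it is quoted from the source \cite{Are22}), so the only question is whether your argument is sound; in substance it is, and it is the standard route one would expect the source to take: decompose $\mathbb{C}^{2n}$ into the $A$-invariant two-dimensional character subspaces $W_z$, represent $A|_{W_z}$ by the Hermitian $2\times 2$ matrix $B_z$, get $\lambda_z^{\pm}$ from trace and determinant, obtain orthogonality of the $W_z$ from character orthogonality, and assemble the orthonormal eigenbasis into the spectral decomposition, with correct remarks about the degenerate cases and about $E_{\lambda_z}^{\pm}$ being a rank-one projector rather than the full spectral idempotent of a repeated eigenvalue.

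The one step to repair is exactly the conjugation issue you flag at the end. With the paper's own block convention (Lemma \ref{lemma5}: the top-right block has $(g,h)$-entry $1$ iff $hg^{-1}\in S$) and eigenvector entries $\chi_z(g_i^{-1})$, the row equations you correctly derive are $b\,\overline{\chi_z(S)}=\frac{-x_z\pm\sqrt{x_z^2+4|\chi_z(S)|^2}}{2}\,a$ and $a\,\chi_z(S)=\frac{x_z\pm\sqrt{x_z^2+4|\chi_z(S)|^2}}{2}\,b$, i.e.\ (\ref{abpm}) with $\chi_z(S)$ and $\overline{\chi_z(S)}$ interchanged; (\ref{abpm}) as printed pairs instead with eigenvector entries $\chi_z(g_i)$, equivalently with the transpose convention for the spoke block. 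Your repair, that the opposite convention ``amounts to replacing $S$ by $S^{-1}$, which gives an isomorphic graph and changes nothing,'' is not the right justification: the graph is the same either way, and the discrepancy does change which pair $(a_z^{\pm},b_z^{\pm})$ is attached to which $\lambda_z^{\pm}$ (and conjugates $a_z^{\pm}\overline{b_z^{\pm}}$, hence the off-diagonal blocks of $E_{\lambda_z}^{\pm}$) whenever $\chi_z(S)\notin\mathbb{R}$. For instance, if $x_z=0$ and $\chi_z(S)$ is purely imaginary, your relations give $(a,b)=(1,\imath)$ for $\lambda_z^{+}$, while (\ref{abpm}) gives $(1,-\imath)$, which in your $B_z$ is the $\lambda_z^{-}$ eigenvector; an isomorphism of graphs cannot resolve a statement about specific labelled eigenvectors. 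The correct reconciliation is a re-indexing remark: the substitution $z\mapsto z^{-1}$ conjugates $\chi_z$, fixes $x_z$, $|\chi_z(S)|$ and $\lambda_z^{\pm}$, and permutes $G$, so the two conventions produce the same family of eigenpairs, and the orthogonality, completeness, and both displayed decompositions for $A$ and $H(t)$ are unaffected. State that (or simply prove the lemma with the conjugated form of (\ref{abpm}), which is the version consistent with Lemma \ref{lemma5}), and your proof is complete.
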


Let $c_z^+,c_z^-,d_z^+,d_z^-,e_z^+,e_z^-~(z\in G)$ be complex numbers such that
\begin{align*}
c_z^\pm=\frac{|a_z^\pm|^2}{|a_z^\pm|^2+|b_z^\pm|^2},~~d_z^\pm=\frac{|b_z^\pm|^2}{|a_z^\pm|^2+|b_z^\pm|^2},~~
e_z^\pm=\frac{a_z^\pm\overline{b_z^\pm}}{|a_z^\pm|^2+|b_z^\pm|^2}.
\end{align*}
If $\chi_z(S)\neq 0$, then
\begin{equation}\label{equation21}
\begin{split}
&c_z^\pm=\frac{\left|x_z\pm\sqrt{x_z^2+4|\chi_z(S)|^2}\right|^2}{\left|x_z\pm\sqrt{x_z^2+4|\chi_z(S)|^2}\right|^2+4|\chi_z(S)|^2},\\
&d_z^\pm=\frac{4|\chi_z(S)|^2}{\left|x_z\pm\sqrt{x_z^2+4|\chi_z(S)|^2}\right|^2+4|\chi_z(S)|^2},\\
&e_z^\pm=\pm2\chi_z(S)\frac{x_z+\sqrt{x_z^2+4|\chi_z(S)|^2}}{\left|x_z+\sqrt{x_z^2+4|\chi_z(S)|^2}\right|^2+4|\chi_z(S)|^2}.
\end{split}
\end{equation}

\noindent Suppose that $u=(g,r)$ and $v=(h,s)$ are two vertices of $\Gamma$, where $r,s\in \{0,1\}$. Let $H(t)_{u,v}$ denote the $(u,v)$-entry of the transition matrix $H(t)$. Then
\begin{equation}\label{htuv}
H(t)_{u,v}=\left\{
\begin{array}{ll}
\frac{1}{n}\sum\limits_{z\in G}\left(c_z^+\exp(\imath t \lambda_z^+ )+c_z^-\exp(\imath t\lambda_z^- )\right)\chi_z(g^{-1}h), &r=s=0,\\[0.4cm]
\frac{1}{n}\sum\limits_{z\in G}\left(d_z^+\exp(\imath t\lambda_z^+ )+d_z^-\exp(\imath t\lambda_z^-)\right)\chi_z(g^{-1}h), &r=s=1,\\[0.4cm]
\frac{1}{n}\sum\limits_{z\in G}\left(e_z^+\exp(\imath t\lambda_z^+)+e_z^-\exp(\imath t\lambda_z^- )\right)\chi_z(g^{-1}h), &r=0,s=1,\\[0.4cm]
\frac{1}{n}\sum\limits_{z\in G}\left(\overline{e_z^+}\exp(\imath t\lambda_z^+ )+\overline{e_z^-}\exp(\imath t\lambda_z^-)\right)\chi_z(g^{-1}h),&r=1,s=0.\\
\end{array}
\right.
\end{equation}

The following lemma gives some useful properties on $c_z^\pm$, $d_z^\pm$ and $e_z^\pm$.

\begin{lemma}\label{cz+cz-}\emph{(see \cite[Lemma 4.3]{Are22})}
Keep the above notations.
\begin{itemize}
\item[\rm (a)] If $\chi_z(S)=0$, then $c_z^+=d_z^-=1$, $c_z^-=d_z^+=e_z^+=e_z^-=0$.
\item[\rm (b)] If $\chi_z(S)\neq 0$, then $0<c_z^+,c_z^-,d_z^+,d_z^-<1$, and $e_z^++e_z^-=0$.
\item[\rm (c)] For each $z\in G$, we have
$c_z^++c_z^-=1, ~d_z^++d_z^-=1, ~|e_z^+|=|e_z^-|\leq\frac{1}{2},$ and
$$c_z^+ c_z^-=d_z^+ d_z^-=\frac{|\chi_z(S)|^2}{4|\chi_z(S)|^2+x_z^2}.$$
\end{itemize}
\end{lemma}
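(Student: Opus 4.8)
The strategy I would use is to first record two identities that hold for every $z$ directly from the defining formulas for $c_z^\pm,d_z^\pm,e_z^\pm$, namely
$$c_z^\pm+d_z^\pm=\frac{|a_z^\pm|^2+|b_z^\pm|^2}{|a_z^\pm|^2+|b_z^\pm|^2}=1\qquad\text{and}\qquad |e_z^\pm|^2=\frac{|a_z^\pm|^2|b_z^\pm|^2}{\bigl(|a_z^\pm|^2+|b_z^\pm|^2\bigr)^2}=c_z^\pm d_z^\pm,$$
and then to reduce everything else to statements about $c_z^+$ and $c_z^-$ alone. For part (a), when $\chi_z(S)=0$ the stated convention sets $(a_z^+,b_z^+)=(1,0)$ and $(a_z^-,b_z^-)=(0,1)$; substituting these into the three definitions gives $c_z^+=d_z^-=1$ and $c_z^-=d_z^+=e_z^+=e_z^-=0$ at once, and the identities in (c) then follow directly from these values.

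For parts (b) and (c) I would assume $\chi_z(S)\neq0$ and abbreviate $S_z:=|\chi_z(S)|^2>0$ and $\delta_z:=\sqrt{x_z^2+4S_z}$, the positive square root. Since $x_z\in\mathbb{R}$ (because $R$ and $L$ are inverse-closed), the moduli in (\ref{equation21}) are genuine squares, so $c_z^\pm=\frac{(x_z\pm\delta_z)^2}{(x_z\pm\delta_z)^2+4S_z}$ and $d_z^\pm=1-c_z^\pm=\frac{4S_z}{(x_z\pm\delta_z)^2+4S_z}$. Because $\delta_z>|x_z|$ strictly, both $x_z+\delta_z>0$ and $x_z-\delta_z<0$ are nonzero, so each of these numerators is strictly positive and strictly smaller than its denominator; this gives $0<c_z^\pm,d_z^\pm<1$. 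Moreover the formula for $e_z^\pm$ in (\ref{equation21}) differs between the two signs only by an overall factor $\pm1$, so $e_z^+=-e_z^-$ and hence $e_z^++e_z^-=0$. This settles (b).

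The only step that does any real work is $c_z^++c_z^-=1$ in part (c). I would put $p:=(x_z+\delta_z)^2$ and $q:=(x_z-\delta_z)^2$ and use the Vieta-type relation $pq=\bigl((x_z+\delta_z)(x_z-\delta_z)\bigr)^2=(x_z^2-\delta_z^2)^2=16S_z^2$ together with $p+q=2x_z^2+2\delta_z^2=4x_z^2+8S_z$; then
$$c_z^++c_z^-=\frac{p}{p+4S_z}+\frac{q}{q+4S_z}=\frac{2pq+4S_z(p+q)}{pq+4S_z(p+q)+16S_z^2}=\frac{64S_z^2+16S_zx_z^2}{64S_z^2+16S_zx_z^2}=1,$$
and the same manipulation gives $c_z^+c_z^-=pq/\bigl((p+4S_z)(q+4S_z)\bigr)=16S_z^2/\bigl(16S_z(4S_z+x_z^2)\bigr)=|\chi_z(S)|^2/\bigl(4|\chi_z(S)|^2+x_z^2\bigr)$. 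From $c_z^\pm+d_z^\pm=1$ and $c_z^++c_z^-=1$ one reads off $d_z^-=c_z^+$, $d_z^+=c_z^-$, hence $d_z^++d_z^-=1$ and $d_z^+d_z^-=c_z^+c_z^-$. Finally $|e_z^\pm|=\sqrt{c_z^\pm d_z^\pm}=\sqrt{c_z^\pm(1-c_z^\pm)}\le\tfrac12$ by the AM--GM inequality, and since $c_z^-=1-c_z^+$ this value is $\sqrt{c_z^+c_z^-}$ for both signs, so $|e_z^+|=|e_z^-|$; in the case $\chi_z(S)=0$ these identities are immediate from (a). I do not anticipate any genuine obstacle: once the two definitional identities and the relation $x_z^2-\delta_z^2=-4S_z$ are in hand, the lemma is a short verification, with the displayed simplification of $c_z^++c_z^-$ being the only place where one actually computes. (One could also bypass (\ref{equation21}) entirely by reading $b_z^\pm/a_z^\pm=(-x_z\pm\delta_z)/(2\chi_z(S))$ off (\ref{abpm}) and recomputing $c_z^\pm,d_z^\pm,e_z^\pm$ from there.)
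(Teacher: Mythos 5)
Your verification is correct: the two definitional identities $c_z^\pm+d_z^\pm=1$ and $|e_z^\pm|^2=c_z^\pm d_z^\pm$, the convention $(a_z^+,b_z^+)=(1,0)$, $(a_z^-,b_z^-)=(0,1)$ for $\chi_z(S)=0$, and the Vieta-style computation with $p=(x_z+\delta_z)^2$, $q=(x_z-\delta_z)^2$, $pq=16S_z^2$, $p+q=4x_z^2+8S_z$ do yield all of (a)--(c), and I checked the displayed simplifications of $c_z^++c_z^-$ and $c_z^+c_z^-$. Note that the paper itself gives no proof of this lemma --- it is imported verbatim from \cite[Lemma 4.3]{Are22} --- so there is nothing internal to compare against; your argument is exactly the kind of direct verification from (\ref{abpm}) and (\ref{equation21}) that underlies the cited result, and it is complete as written.
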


\section{Characterization of semi-Cayley graphs admitting fractional revival}
%In this section, we give a characterization on semi-Cayley graphs admitting fractional revival.

%Firstly, we discuss an equivalent condition with the definition of fractional revival. %Next, we discuss the necessary and sufficient condition on semi-Cayley graphs %admitting fractional revival in the same orbits or between the different orbits. %Moreover, The relationship between $(\alpha, \beta)$-revival occurring on a %semi-Cayley graph $\Gamma$ from a vertex and $\Gamma$ having $(\alpha, %\beta)$-revival from every vertex is discussed.

\subsection{An equivalent condition on semi-Cayley graphs admitting fractional revival}
Before proceeding, we introduce a result obtained by Chan et al. in \cite{CCTVZ19}.
\begin{lemma}\label{lemma4}\emph{(see \cite[Proposition 4.1]{CCTVZ19})}
If $(\alpha,\beta)$-revival occurs from $u$ to $v$ in a graph, then $(-\frac{\bar{\alpha}\beta}{\bar{\beta}},\beta)$-revival occurs from $v$ to $u$ at the same time.
\end{lemma}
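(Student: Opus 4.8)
The plan is to prove Lemma~\ref{lemma4} directly from the definition of fractional revival by writing the transition matrix in terms of its eigenprojectors and exploiting the fact that $H(t)$ is symmetric and unitary. Suppose that $(\alpha,\beta)$-revival occurs from $u$ to $v$ at time $t$, that is, $H(t)\mathbf{e}_u=\alpha\mathbf{e}_u+\beta\mathbf{e}_v$ with $|\alpha|^2+|\beta|^2=1$. Since the adjacency matrix $A$ is real symmetric, each eigenprojector $E_{\lambda_j}$ is real symmetric, hence $H(t)=\sum_j\exp(\imath t\lambda_j)E_{\lambda_j}$ is complex symmetric, i.e. $H(t)^\top=H(t)$. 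It is also unitary, because $H(t)^H=\sum_j\exp(-\imath t\lambda_j)E_{\lambda_j}=H(-t)=H(t)^{-1}$. Combining these two facts gives $H(t)^{-1}=\overline{H(t)}$, the entrywise complex conjugate.

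The key step is to read off the relevant entries. From $H(t)\mathbf{e}_u=\alpha\mathbf{e}_u+\beta\mathbf{e}_v$ we obtain $H(t)_{u,u}=\alpha$, $H(t)_{v,u}=\beta$, and $H(t)_{w,u}=0$ for every $w\notin\{u,v\}$. By symmetry of $H(t)$ these equal $H(t)_{u,u}=\alpha$, $H(t)_{u,v}=\beta$, and $H(t)_{u,w}=0$ for $w\notin\{u,v\}$. Now I would apply $H(-t)=H(t)^{-1}=\overline{H(t)}$: acting on $\mathbf{e}_u$ we get $H(-t)\mathbf{e}_u=\overline{\alpha}\,\mathbf{e}_u+\overline{\beta}\,\mathbf{e}_v$, and since $H(t)$ is unitary the column $H(t)\mathbf{e}_v$ must be a unit vector orthogonal to $H(t)\mathbf{e}_u=\alpha\mathbf{e}_u+\beta\mathbf{e}_v$ in all coordinates except possibly $u$ and $v$; combined with the vanishing of the off-support entries of both columns, $H(t)\mathbf{e}_v$ is supported on $\{\mathbf{e}_u,\mathbf{e}_v\}$ as well. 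Writing $H(t)\mathbf{e}_v=\gamma\mathbf{e}_u+\delta\mathbf{e}_v$, symmetry of $H(t)$ already forces $\gamma=H(t)_{u,v}=\beta$.

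Finally I would pin down $\delta$. Unitarity of $H(t)$ restricted to the two-dimensional invariant subspace $\mathrm{span}\{\mathbf{e}_u,\mathbf{e}_v\}$ says that $\begin{pmatrix}\alpha&\beta\\ \beta&\delta\end{pmatrix}$ is unitary, so its columns are orthonormal: $\overline{\alpha}\beta+\overline{\beta}\delta=0$, giving $\delta=-\overline{\alpha}\beta/\overline{\beta}$ (note $\beta\neq0$, else this would be perfect state transfer and the claimed revival from $v$ is vacuous, or one treats it separately as the $(-\overline{\alpha}\beta/\overline{\beta},\beta)$ statement degenerating correctly). Hence $H(t)\mathbf{e}_v=-\frac{\overline{\alpha}\beta}{\overline{\beta}}\mathbf{e}_u+\beta\mathbf{e}_v$, which is exactly $(-\frac{\overline{\alpha}\beta}{\overline{\beta}},\beta)$-revival from $v$ to $u$ at the same time $t$; the coefficient identity $|{-\overline{\alpha}\beta/\overline{\beta}}|^2+|\beta|^2=|\alpha|^2+|\beta|^2=1$ confirms it is a genuine revival. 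The only mildly delicate point, and the one I would be most careful about, is justifying that $H(t)\mathbf{e}_v$ has no support outside $\{u,v\}$: this follows because the two columns $H(t)\mathbf{e}_u$ and $H(t)\mathbf{e}_v$ together with the remaining columns form an orthonormal basis, and the first column already has all its off-$\{u,v\}$ entries zero while symmetry transfers this to the rows, but I would spell it out via the unitarity relation $\sum_w H(t)_{w,u}\overline{H(t)_{w,v}}=0$ together with $H(t)_{w,u}=0$ for $w\neq u,v$ and the analogous row conditions, rather than hand-waving about invariant subspaces.
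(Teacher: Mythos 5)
Your proof is correct. Note, however, that the paper does not prove this lemma at all: it is imported verbatim from \cite[Proposition~4.1]{CCTVZ19}, so there is no internal argument to compare against. What you have written is a self-contained reconstruction of the standard argument behind that cited result, resting on exactly the two properties one needs: $H(t)^\top=H(t)$ (because $A$ is real symmetric) and $H(t)^H=H(t)^{-1}$. Your handling of the one delicate point is sound: from column $u$ supported on $\{u,v\}$ and symmetry, the row-orthogonality relation $\sum_x H(t)_{w,x}\overline{H(t)_{u,x}}=\overline{\beta}\,H(t)_{w,v}=0$ for $w\notin\{u,v\}$ (using $H(t)_{w,u}=0$ and $\beta\neq0$) kills the off-support entries of column $v$; alternatively, once column orthogonality gives $\delta=-\overline{\alpha}\beta/\overline{\beta}$ with $|\delta|=|\alpha|$, the unit norm of column $v$ forces the remaining entries to vanish, so you do not even need the row conditions. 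Two small remarks: the implicit hypothesis $\beta\neq0$ is indeed needed for the statement to parse, as you observe; and your assertion that each eigenprojector $E_{\lambda_j}=\xi_j\xi_j^H$ is real symmetric is not quite right when the eigenvectors are complex (as with the character eigenvectors used in this paper), but it is also unnecessary, since $H(t)^\top=\exp(\imath tA^\top)=\exp(\imath tA)=H(t)$ follows directly from $A^\top=A$.
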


Gao and Luo has given the adjacency matrix of semi-Cayley graphs in \cite{GaoLuo10}.

\begin{lemma}\emph{(see \cite[Lemma 3.1]{GaoLuo10})}\label{lemma5}
Let $\Gamma=\SC(G,R,L,S)$ be a semi-Cayley graph over an abelian group $G$. Let $A,B,C$ and $D$ be the adjacency matrices of $\Gamma$, $\Cay(G, S)$, $\Cay(G, R)$ and $\Cay(G, L)$, respectively. Then
\begin{align*}
A=\left(
\begin{array}{cc}
C    &B\\ [0.2mm]
B^T  &D
\end{array}
\right),
\end{align*}
where $B^T$ means the transpose of $B$.
\end{lemma}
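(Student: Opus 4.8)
The statement to prove is Lemma~\ref{lemma5}, which gives the block form of the adjacency matrix $A$ of $\SC(G,R,L,S)$ in terms of the adjacency matrices of $\Cay(G,S)$, $\Cay(G,R)$, $\Cay(G,L)$.

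\medskip

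\noindent\textbf{Proof proposal.} The plan is to unwind the definition of the semi-Cayley graph and read off the four blocks directly from the three families of edges. First I would fix the ordering $g_1,\dots,g_n$ of $G$ and index the rows and columns of $A$ by the pairs $(g_i,0)$ for $i=1,\dots,n$ followed by $(g_i,1)$ for $i=1,\dots,n$; this partitions $A$ into four $n\times n$ blocks, which I will call $A_{00}$, $A_{01}$, $A_{10}$, $A_{11}$ according to which orbit the row- and column-vertices lie in. The key observation is that each of the three edge sets in the definition contributes to exactly one pair of blocks: the right edges $\{(g,0),(h,0)\}$ with $hg^{-1}\in R$ only affect $A_{00}$, the left edges $\{(g,1),(h,1)\}$ with $hg^{-1}\in L$ only affect $A_{11}$, and the spoke edges $\{(g,0),(h,1)\}$ with $hg^{-1}\in S$ only affect $A_{01}$ and (by symmetry of the adjacency matrix) $A_{10}$.

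\medskip

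Next I would compute each block. For $A_{00}$: its $(g,h)$-entry is $1$ iff $\{(g,0),(h,0)\}$ is a right edge, i.e.\ iff $hg^{-1}\in R$. Since $C=A(\Cay(G,R))$ has $(g,h)$-entry equal to $1$ iff $hg^{-1}\in R$ (by definition of the Cayley graph, using that $R$ is inverse-closed so the graph is undirected and $C$ is symmetric), we get $A_{00}=C$. Identically, $A_{11}=D=A(\Cay(G,L))$. For $A_{01}$: its $(g,h)$-entry is $1$ iff $\{(g,0),(h,1)\}$ is a spoke edge, i.e.\ iff $hg^{-1}\in S$; writing $B=A(\Cay(G,S))$, whose $(g,h)$-entry is $1$ iff $hg^{-1}\in S$, we get $A_{01}=B$. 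Finally $A_{10}=A_{01}^{T}=B^{T}$ because $A$ is a symmetric $0$--$1$ matrix (the graph is undirected); alternatively one checks directly that the $(g,h)$-entry of $A_{10}$ is $1$ iff $\{(g,1),(h,0)\}$ is a spoke edge iff $gh^{-1}\in S$, which is exactly the $(h,g)$-entry of $B$, hence $A_{10}=B^{T}$. Assembling the blocks gives the claimed form
\[
A=\left(\begin{array}{cc} C & B\\ B^{T} & D\end{array}\right).
\]

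\medskip

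\noindent\textbf{Main obstacle.} There is essentially no obstacle here — the proof is a bookkeeping exercise. The only point requiring a little care is the bookkeeping around which group element plays the role of the ``difference'' in the Cayley-graph convention: one must be consistent about whether an edge of $\Cay(G,X)$ joins $g$ to $xg$ (as in the paper's definition) and therefore corresponds to the condition $hg^{-1}\in X$, and then check that this matches the semi-Cayley edge conditions verbatim. Keeping the left/right multiplication convention fixed throughout makes the identification of each block immediate, and the symmetry of $A$ (guaranteed by $R,L$ being inverse-closed and the spoke edges being an undirected edge set) handles the off-diagonal blocks. $\hfill\Box$
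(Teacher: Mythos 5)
Your proof is correct; note that the paper itself gives no argument for this lemma, simply citing \cite[Lemma 3.1]{GaoLuo10}, and your block-by-block verification is exactly the standard bookkeeping argument behind that citation. One detail you implicitly got right and is worth keeping: since $S$ is not assumed inverse-closed, $B$ need not be symmetric (so $\Cay(G,S)$ is really to be read as the matrix with $(g,h)$-entry $1$ iff $hg^{-1}\in S$), and your derivation of the lower-left block via the symmetry of $A$ itself (or the direct check that the $(g,h)$-entry of $A_{10}$ equals the $(h,g)$-entry of $B$) correctly yields $B^{T}$ without ever assuming $B=B^{T}$.
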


Note that $H(t)$ is a unitary matrix. If $(\alpha,\beta)$-revival occurs on $\Gamma=\SC(G,R,L,S)$ from $u$ to $v$, by (\ref{FRCAY-EQUATION3}), the $u$-th row of $H(t)$ is determined by
\begin{equation*}
H(t)_{u,w}=\left\{
\begin{array}{ccc}
\alpha, &\text{if}~~w=u,\\[0.2cm]
\beta,  &\text{if}~~w=v,\\[0.2cm]
0,      &\text{otherwise}.
\end{array}\right.
\end{equation*}

%Recall that $A,C$ and $B$ be the adjacency matrices of $\Cay(G, R)$, $\Cay(G, L)$, and $\Cay(G, S)$, respectively. Therefore $A, B, C$ are $G$-invariant matrices, namely,
%$$a_{(gz,0), (hz,0)}=a_{(g,0),(h,0)},
%c_{(gz,1), (hz,1)}=c_{(g,1),(h,1)},
%b_{(gz,0), (hz,1)}=b_{(g,0),(h,1)}, \forall g,h,z\in G.$$
%With the similarly proof in \cite{Caoluo22},
Next, we obtain the transition matrix with the block form of the adjacency matrix described in Lemma \ref{lemma5}.
 %For the sake of convenience, we give a concrete proof in the following.

\begin{lemma}\label{lemma1}
Let $\Gamma=\SC(G,R,L,S)$ be a semi-Cayley graph over an abelian group $G$ of order $n$. Suppose that $\alpha$ and $\beta$ are two complex numbers. Then
\begin{itemize}
\item[\rm (a)] $(\alpha,\beta)$-revival occurs on $\Gamma$ from a vertex $u=(g,0)$ to a vertex $v=(h,0)$ at some time $t$ if and only if the transition matrix is of the form
\begin{equation}\label{equation32}
H(t)=\left(
\begin{array}{cc}
\alpha I_n+\beta Q_n &\mathbf{0}\\[0.2cm]
\mathbf{0}         &*          \\[0.2cm]
\end{array}\right),
\end{equation}
where $*$ is a unitary matrix of order $n$ and $Q_n$ is a permutation matrix with no fixed points and $Q_n^T=Q_n$. Moreover, $\alpha\bar{\beta}+\bar{\alpha}\beta=0$.
\item[\rm (b)]$(\alpha,\beta)$-revival occurs on $\Gamma$ from a vertex $u=(g,1)$ to a vertex $v=(h,1)$ at some time $t$ if and only if the transition matrix is of the form
\begin{equation*}\label{equation33}
H(t)=\left(
\begin{array}{cc}
*'                  &\mathbf{0}\\[0.2cm]
\mathbf{0}         &\alpha I_n+\beta Q_n'          \\[0.2cm]
\end{array}\right),
\end{equation*}
where $*'$ is a unitary matrix of order $n$ and $Q'_n$ is a permutation matrix with no fixed points and $Q_n'^T=Q_n'$. Moreover, $\alpha\bar{\beta}+\bar{\alpha}\beta=0$.
\item[\rm (c)]
$(\alpha,\beta)$-revival occurs on $\Gamma$ from a vertex $u=(g,0)$ to a vertex $v=(h,1)$ at some time $t$ if and only if the transition matrix is of the form
\begin{equation}\label{equation34}
H(t)=\left(
\begin{array}{cc}
\alpha I_n         &\beta Q_n''\\[0.2cm]
\beta Q_n''^T          &-\frac{\bar{\alpha}\beta}{\bar{\beta}} I_n          \\[0.2cm]
\end{array}\right),
\end{equation}
where $Q_n''$ is a permutation matrix with no fixed points.
\end{itemize}
\end{lemma}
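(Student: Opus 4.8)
The plan is to use that $H(t)=\exp(\imath tA)$ is both \emph{symmetric} — because $A$ is real symmetric, so $H(t)^{\top}=\exp(\imath tA^{\top})=H(t)$ — and \emph{unitary}, and to combine this with the block description of $H(t)$ coming from Lemma~\ref{lemma5} and the explicit entries in~(\ref{htuv}). Write $H(t)$ in $2\times 2$ block form with $n\times n$ blocks $H_{ij}(t)$, $i,j\in\{0,1\}$, where $H_{ij}(t)$ records the transitions between orbit $i$ and orbit $j$; symmetry of $H(t)$ says precisely $H_{ij}(t)=H_{ji}(t)^{\top}$. The key observation, read off from~(\ref{htuv}), is that \emph{each block $H_{ij}(t)$ is a $G$-circulant matrix}: its $((g,i),(h,j))$-entry is $\tfrac1n\sum_{z\in G}\gamma_z^{(ij)}(t)\chi_z(g^{-1}h)$, depending only on $g^{-1}h$. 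Hence, by the Fourier inversion formula on $G$, each block is determined by any single one of its columns; in particular, if one column of $H_{ij}(t)$ vanishes then $H_{ij}(t)=\mathbf 0$, and if one column of $H_{ij}(t)$ equals $\alpha\mathbf e_g+\beta\mathbf e_h$ then $H_{ij}(t)=\alpha I_n+\beta P$, where $P$ is the permutation matrix of the translation of $G$ carrying $g$ to $h$.

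For (a), I would start from $H(t)\mathbf e_u=\alpha\mathbf e_u+\beta\mathbf e_v$ with $u=(g,0)$, $v=(h,0)$ (so $g\neq h$; I assume $\beta\neq 0$, as otherwise $Q_n$ is not determined). Since the whole column $u$ sits in the orbit-$0$ coordinates, the column of $H_{10}(t)$ indexed by $g$ is zero, hence $H_{10}(t)=\mathbf 0$ and, by symmetry, $H_{01}(t)=\mathbf 0$; so $H(t)=\diag(H_{00}(t),H_{11}(t))$. The same column gives that column $g$ of $H_{00}(t)$ equals $\alpha\mathbf e_g+\beta\mathbf e_h$, so $H_{00}(t)=\alpha I_n+\beta Q_n$ with $Q_n$ the permutation of translation by $h^{-1}g\neq 1$ — fixed-point-free because $g\neq h$. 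Symmetry of $H_{00}(t)$ forces $Q_n=Q_n^{\top}$, whence $Q_n^2=Q_nQ_n^{\top}=I_n$; expanding the unitarity identity $(\alpha I_n+\beta Q_n)(\bar\alpha I_n+\bar\beta Q_n)=I_n$ and using $Q_n=Q_n^{\top}=Q_n^{-1}$ then yields $\alpha\bar\beta+\bar\alpha\beta=0$ (the coefficient of $Q_n$ must vanish, $Q_n\neq I_n$). The converse is immediate: if $H(t)$ has the stated shape, reading column $(g,0)$ gives $(\alpha,\beta)$-revival from $(g,0)$ to the appropriate $(h,0)$. (Alternatively, $\alpha\bar\beta+\bar\alpha\beta=0$ follows from Lemma~\ref{lemma4}, since the block form also produces $(\alpha,\beta)$-revival from $v$ back to $u$, forcing $\alpha=-\bar\alpha\beta/\bar\beta$.) Part (b) is the same argument with the two orbits interchanged — equivalently, it is (a) applied to the isomorphic graph $\SC(G,L,R,S^{-1})$, whose two orbits are those of $\SC(G,R,L,S)$ swapped.

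For (c), with $u=(g,0)$, $v=(h,1)$ and $\beta\neq0$, the equation $H(t)\mathbf e_u=\alpha\mathbf e_u+\beta\mathbf e_v$ now says that column $g$ of $H_{00}(t)$ is $\alpha\mathbf e_g$ and column $g$ of $H_{10}(t)$ is $\beta\mathbf e_h$; by the circulant observation $H_{00}(t)=\alpha I_n$ and $H_{10}(t)=\beta R$ with $R$ the permutation carrying $g$ to $h$, and setting $Q_n''=R^{\top}$ and using $H_{01}(t)=H_{10}(t)^{\top}$ gives $H_{01}(t)=\beta Q_n''$, $H_{10}(t)=\beta Q_n''^{\top}$. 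To pin down $H_{11}(t)$ I would read the $(0,1)$-block of the unitarity identity $H(t)H(t)^{H}=I_{2n}$, which becomes $\alpha\bar\beta Q_n''+\beta Q_n''H_{11}(t)^{H}=\mathbf 0$; cancelling the invertible $Q_n''$ forces $H_{11}(t)^{H}=-\tfrac{\alpha\bar\beta}{\beta}I_n$, i.e.\ $H_{11}(t)=-\tfrac{\bar\alpha\beta}{\bar\beta}I_n$, and the remaining blocks of $H(t)H(t)^{H}=I_{2n}$ are then automatic using $|\alpha|^2+|\beta|^2=1$. Conversely, if $H(t)$ has the form~(\ref{equation34}), then column $(g,0)$ of $H(t)$ is $\alpha\mathbf e_{(g,0)}+\beta\mathbf e_{(h,1)}$, giving revival.

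The step I expect to be the crux — the one worth isolating as a preliminary claim — is the reduction in the first paragraph: that every block of $H(t)$ is a $G$-circulant, so that a single column controls the entire block. This is where abelianness of $G$ enters (through~(\ref{htuv}) and Fourier inversion), and once it is available the rest is routine manipulation of the symmetry and unitarity of $H(t)$. The only other points needing care are the standing assumption $\beta\neq0$ (needed for the permutation matrices to be well-defined and for $-\bar\alpha\beta/\bar\beta$ to make sense) and the observation that in parts (a)--(b) the hypothesis $u\neq v$ forces $g\neq h$, which is exactly what makes $Q_n$, $Q_n'$ fixed-point-free.
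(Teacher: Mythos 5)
Your proposal is correct and follows essentially the same route as the paper: the crux in both is the translation invariance $H(t)_{(g,r),(h,s)}=H(t)_{(gz,r),(hz,s)}$ (which the paper establishes by counting walks of each length, while you read the same block-circulant structure off from (\ref{htuv})), after which a single column determines each block, and symmetry plus unitarity of $H(t)$ give $Q_n^{T}=Q_n$ and $\alpha\bar\beta+\bar\alpha\beta=0$. The only minor divergence is in part (c), where you determine the lower-right block from the $(0,1)$-block of $H(t)H(t)^{H}=I_{2n}$ instead of invoking Lemma \ref{lemma4} as the paper does --- both arguments work, and your explicit remark that $\beta\neq0$ is needed there is a point the paper leaves implicit.
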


\begin{proof}
Let $u=(g,r)$ and $v=(h,s)$ be two arbitrarily distinct vertices of $\Gamma$. Note that
$$a_{(g,r),(h,s)}=a_{(gz,r),(hz,s)}~\text{for}~g,h,z\in G ~\text{and}~r,s\in \{0,1\}.$$
Suppose that there exists a walk of length $k$ from $(g,r)$ to $(h,s)$, say $(g,r)\rightarrow(g_1,*)\rightarrow(g_2,*)\rightarrow\cdots\rightarrow(h,s)$. There must exist a walk of length $k$ from $(gz,r)$ to $(hz,s)$: $(gz,r)\rightarrow(g_1z,*)\rightarrow(g_2z,*)\rightarrow\cdots\rightarrow(hz,s)$. Conversely, suppose that there exists a walk of length $k$ from $(gz,r)$ to $(hz,s)$, say $(gz,r)\rightarrow(g_1',*)\rightarrow(g_2',*)\rightarrow\cdots\rightarrow(hz,s)$. There must exist a walk of length $k$ from $(g,r)$ to $(h,s)$: $(g,r)\rightarrow(g_1'z^{-1},*) \rightarrow(g_2'z^{-1},*)\rightarrow\cdots\rightarrow(h,s)$. Thus, we arrived that the number of walks of length $k$ from $(g,r)$ to $(h,s)$, is equal to the number of walks of length $k$ from $(gz,r)$ to $(hz,s)$. Let $a^{(k)}_{(g,r),(h,s)}$ denote the the number of walks of length $k$ from $(g,r)$ to $(h,s)$. Then
$$a^{(k)}_{(g,r),(h,s)}=a^{(k)}_{(gz,r),(hz,s)}~\text{for}~g,h,z\in G ~\text{and}~r,s\in \{0,1\},$$
Note \cite[Proposition~1.3.4]{Cvetkovic10} that $A^k=(a_{u,v}^{(k)})_{u,v\in V(\Gamma)}$, where $k\in \mathbb{Z}^+$, the set of positive integers.
With the definition of $H(t)$, we have
$$H(t)_{(g,r),(h,s)}=H(t)_{(gz,r),(hz,s)}.$$

(a) We first prove the necessity. Suppose that $(\alpha,\beta)$-revival occurs on $\Gamma$ from a vertex $u=(g,0)$ to a vertex $v=(h,0)$ at some time $t$, then
$$H(t)_{(g,0),(g,0)}=\alpha,~H(t)_{(g,0),(h,0)}=\beta, \text{~and~}H(t)_{(g,0),w}=0, ~\forall w\neq (g,0),(h,0),$$
which implies that
$$H(t)_{(gz,0),(gz,0)}=\alpha,~H(t)_{(gz,0),(hz,0)}=\beta, \text{~and~}H(t)_{(gz,0),w'}=0, ~\forall w'\neq (gz,0),(hz,0).$$
When $z$ runs through $G$, it is easy to get that (\ref{equation32})  holds, where $*$ is a unitary matrix of order $n$, and $Q_n$ is a permutation matrix without fixed points satisfying $Q_nQ_n^T=I_n$.  %Combining with Lemma \ref{lemma4}, we get that (\ref{equation34}) holds.
Note that the matrix $Q_n$ is symmetric as $H(t)$ is a symmetric matrix. Then $Q_n^2=Q_nQ_n^T=I_n$. Notice also that $H(t)$ is a unitary matrix. Then
\begin{align*}
H(t)(H(t))^H&=\left(
\begin{array}{cc}
\alpha I_n+\beta Q_n   &0\\
0                      &*
\end{array}
\right)\left(
\begin{array}{cc}
\bar{\alpha} I_n+\bar{\beta} Q_n    &0\\
0                                   &*^H
\end{array}
\right)\\
&=\left(
\begin{array}{cc}
(\alpha I_n+\beta Q_n)(\bar{\alpha} I_n+\bar{\beta}Q_n)   &0\\
0                                                         &I_n
\end{array}
\right)\\
&=I_{2n}.
\end{align*} %where $*^H$ denotes the conjugate transpose of the matrix $*$.
Thus,
$$(\alpha I_n+\beta Q_n)(\bar{\alpha} I_n+\bar{\beta}Q_n)=\alpha\bar{\alpha}I_n +(\alpha\bar{\beta}+\bar{\alpha}\beta)Q_n+\beta\bar{\beta}Q_n^2=I_n,$$
which implies that
$$\alpha\bar{\beta}+\bar{\alpha}\beta=0.$$

%There is an analogous proof with $r=s=1$.

Next we prove the sufficiency. Suppose that $H(t)$ is of the form (\ref{equation32}). Since $Q_n$ is a permutation matrix with no fixed points, there are two elements $g$ and $h$ such that
$$
(Q_n)_{g,h}=1, \text{~and~}(Q_n)_{g,f}=0, ~\forall f\neq h.
$$
Thus,
$$
H(t)_{(g,0),(g,0)}=\alpha,~H(t)_{(g,0),(h,0)}=\beta, \text{~and~}H(t)_{(g,0),(f,0)}=0, ~\forall f\neq g,h.
$$
Hence $(\alpha,\beta)$-revival occurs from $(g,0)$ to $(h,0)$ at some time $t$.

(b) The proof is analogous to that of (a), and hence we omit the details here.

(c) We first prove the necessity. Suppose that $(\alpha,\beta)$-revival occurs on $\Gamma$ from a vertex $u=(g,0)$ to a vertex $v=(h,1)$ at some time $t$. Then
$$H(t)_{(g,0),(g,0)}=\alpha,~H(t)_{(g,0),(h,1)}=\beta, \text{~and~}H(t)_{(g,0),w}=0, ~\forall w\neq (g,0),(h,1),$$
which implies that
$$H(t)_{(gz,0),(gz,0)}=\alpha,~H(t)_{(gz,0),(hz,1)}=\beta, \text{~and~}H(t)_{(gz,0),w'}=0, ~\forall w'\neq (gz,0),(hz,1).$$
By Lemma \ref{lemma4}, we arrive that $(-\frac{\bar{\alpha}\beta}{\bar{\beta}},\beta)$-revival occurs on $\Gamma$ from  $v=(h,1)$ to  $u=(g,0)$ at the same time $t$. Thus,
$$H(t)_{(h,1),(h,1)}=-\frac{\bar{\alpha}\beta}{\bar{\beta}},~H(t)_{(h,1),(g,0)}=\beta, \text{~and~}H(t)_{(h,1),w}=0, ~\forall w\neq (g,0),(h,1),$$
which implies that
$$H(t)_{(hz,1),(hz,1)}=-\frac{\bar{\alpha}\beta}{\bar{\beta}},~H(t)_{(hz,1),(gz,0)}=\beta, \text{~and~}H(t)_{(hz,1),w'}=0, ~\forall w'\neq (gz,0),(hz,1).$$
When $z$ runs through $G$, it is easy to get that (\ref{equation34})  holds, where $Q_n''$ is a permutation matrix without fixed points satisfying $Q_n''Q_n''^T=I_n$.

Next we prove the sufficiency. Suppose that $H(t)$ is of the form (\ref{equation34}). Since $Q_n''$ is a permutation matrix with no fixed points, there are two elements $g$ and $h$ such that
$$
(Q_n'')_{g,h}=1, \text{~and~}(Q_n'')_{g,f}=0, ~\forall f\neq h.
$$
Thus,
$$
H(t)_{(g,0),(g,0)}=\alpha,~H(t)_{(g,0),(h,1)}=\beta, \text{~and~}H(t)_{(g,0),w}=0, ~\forall w\neq (g,0),(h,1).
$$
Hence $(\alpha,\beta)$-revival occurs from $(g,0)$ to $(h,1)$ at some time $t$.
\qed\end{proof}

%Thanks to Lemma \ref{lemma1}, we can arrive that if $\Gamma=\SC(G,R,L,S)$ has $(\alpha,\beta)$-revival from every vertex, then it cannot have the situation that $(\alpha, \beta)$-revival occurs from $u=(g,r)$ to $v=(h,s)$ with $r=s$, and also occurs from $u=(g',r')$ to $v=(h',s')$ with $r'\neq s'$.
If $(\alpha,\beta)$-revival occurs from a vertex $u=(g,r)$ to a vertex $v=(h,s)$ with $r=s$, and $u,v$ range over all vertices of $\Gamma$, then we say $\Gamma$ has $(\alpha,\beta)$-revival \emph{in the same orbit}. If $(\alpha,\beta)$-revival occurs from a vertex $u=(g,r)$ to a vertex $v=(h,s)$ with $r\neq s$, and $u,v$ range over all vertices of $\Gamma$, then we say $\Gamma$ has $(\alpha,\beta)$-revival \emph{between the different orbits}.

Note that (a) and (c) (respectively, (b) and (c)) in Lemma  \ref{lemma1} cannot occur at the same time. Then, we obtain the following result immediately.

\begin{cor}
Let $\Gamma=\SC(G,R,L,S)$ be a semi-Cayley graph over an abelian group $G$ of order $n$. Suppose that $\alpha$ and $\beta$ are two complex numbers. Then $\Gamma$ has $(\alpha,\beta)$-revival from every vertex if and only if either $\Gamma$ has $(\alpha,\beta)$-revival in the same orbit, or $\Gamma$ has $(\alpha,\beta)$-revival between the different orbits.
\end{cor}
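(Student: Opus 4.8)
The plan is to derive the corollary from Lemma~\ref{lemma1} by testing a single canonical vertex, reading off the resulting block structure of $H(t)$, and then spreading the conclusion over all vertices. The ``if'' direction needs no work: by the definitions just introduced, saying that $\Gamma$ has $(\alpha,\beta)$-revival in the same orbit (resp.\ between the different orbits) already asserts that from \emph{every} vertex $u$ there is a vertex $v$ --- lying in the orbit of $u$ (resp.\ in the other orbit) --- with $(\alpha,\beta)$-revival from $u$ to $v$; in either case $(\alpha,\beta)$-revival occurs from every vertex of $\Gamma$.

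For the ``only if'' direction, suppose $(\alpha,\beta)$-revival occurs from every vertex, so that $H(t)=\alpha I_{2n}+\beta Q$ for a fixed-point-free order-two permutation matrix $Q$; unitarity of $H(t)$ then forces $\alpha\bar\beta+\bar\alpha\beta=0$ (this is also recorded in the ``moreover'' clauses of Lemma~\ref{lemma1}). I would apply the hypothesis to the vertex $u_0=(1,0)$: revival occurs from $u_0$ to some vertex $v_0$, which lies in exactly one of the two orbits. If $v_0=(h,0)$, then Lemma~\ref{lemma1}(a) puts $H(t)$ into the block-diagonal form~(\ref{equation32}) with top-left block $\alpha I_n+\beta Q_n$, where $Q_n$ is a fixed-point-free symmetric permutation matrix; reading off rows shows $(\alpha,\beta)$-revival occurs from each $(g,0)$ to $(Q_ng,0)$, while for an orbit-$1$ vertex the hypothesis supplies a revival target which, since the $(g,1)$-row of $H(t)$ is supported on orbit $1$ (the off-diagonal block is $\mathbf{0}$), must itself lie in orbit $1$ --- so $\Gamma$ has $(\alpha,\beta)$-revival in the same orbit. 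If instead $v_0=(h,1)$, then Lemma~\ref{lemma1}(c) puts $H(t)$ into the form~(\ref{equation34}), whose lower-right block $-\tfrac{\bar\alpha\beta}{\bar\beta}I_n$ equals $\alpha I_n$ by the relation above; hence $Q$ is the fixed-point-free involution swapping the two orbits through $Q_n''$, and reading off rows yields $(\alpha,\beta)$-revival from $(g,0)$ to $(Q_n''g,1)$ and from $(g,1)$ to $(Q_n''^{T}g,0)$ for every $g$, i.e.\ $\Gamma$ has $(\alpha,\beta)$-revival between the different orbits. Since $u_0$ falls into exactly one of these two cases, one of the two alternatives holds, completing the equivalence.

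The one genuinely delicate point --- and the step I would spell out rather than gloss over --- is the bookkeeping of the corner scalar in~(\ref{equation34}): Lemma~\ref{lemma1}(c) delivers $-\tfrac{\bar\alpha\beta}{\bar\beta}I_n$ there, so one must invoke $\alpha\bar\beta+\bar\alpha\beta=0$ to conclude that the revivals originating in orbit $1$ are honest $(\alpha,\beta)$-revivals rather than $(-\tfrac{\bar\alpha\beta}{\bar\beta},\beta)$-revivals. Everything else is a direct transcription of Lemma~\ref{lemma1}, together with the elementary observation that a single vertex cannot have revival targets in both orbits --- which is exactly the ``(a) and (c) (respectively, (b) and (c)) cannot occur at the same time'' remark preceding the statement --- so I expect the argument to be short.
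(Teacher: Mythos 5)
Your argument is correct and follows essentially the same route as the paper: the paper simply observes that the block forms of Lemma~\ref{lemma1}(a)/(b) and (c) are mutually exclusive and declares the corollary immediate, which is exactly your case split on the orbit of the revival partner of $(1,0)$. Your write-up only fills in the row-reading details (and the check that $-\bar\alpha\beta/\bar\beta=\alpha$) that the paper leaves implicit.
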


%\begin{proof}
%We only need to prove that if $\Gamma$ has $(\alpha, \beta)$-revival, then %$(\alpha,\beta)$-revival cannot occur from a vertex $u=(g,r)$ to a vertex %$v=(h,s)$ with $r=s$, and from a vertex $u'=(g',r')$ to a vertex $v'=(h',s')$ with %$r'\neq s'$ at the same time. Since (a) and (c)(or (b) and (c)) in Lemma %\ref{lemma1} cannot occur at the same time, that gets the desired result.
%\qed\end{proof}

%If a graph $\Gamma$ has $(\alpha,\beta)$-revival, in the equation (\ref{ai+bq}), the matrix $Q$ is a permutation matrix and $Q$ is of order two. Since $H(t)$ is a unitary matrix, we obtain that
%\begin{equation*}
%H(t)\overline{H(t)}=\alpha\bar{\alpha}I_n+\beta\bar{\beta}Q^2+(\alpha\bar{\beta}+\bar{\alpha}\beta)Q=I_n.
%\end{equation*}
%Since $Q^2=I_n$, we have
%$$\alpha\bar{\beta}+\bar{\alpha}\beta=0.$$
%Therefore, we have the following result.
%
%\begin{lemma}\label{alpha-beta}
%If a graph $\Gamma$ has $(\alpha,\beta)$-revival, then $\alpha \bar{\beta}+\bar{\alpha}\beta=0$.
%\end{lemma}
\subsection{Semi-Cayley graphs admitting fractional revival from a vertex $u=(g,r)$ to a vertex $v=(h,s)$ with $r=s$}

In this section, we give a necessary and sufficient condition for $(\alpha,\beta)$-revival occurring on semi-Cayley graphs from a vertex $u=(g,r)$ to a vertex $v=(h,s)$ with $r=s$, where $\alpha\beta\neq0$.
%\cha{}{semi-Cayley graphs admitting $(\alpha, \beta)$-revival in the same orbit, where $\alpha\beta\neq0$}.

\begin{theorem}\label{maintheorem}
Let $\Gamma=\SC(G,R,L,S)$ be a semi-Cayley graph over an abelian group $G$ of order $n$. Suppose that $\alpha$ and $\beta$ are two nonzero complex numbers. Let
$$
X=\{z\in G:\chi_z(S)=0\}.
$$
Then $(\alpha,\beta)$-revival occurs on $\Gamma$ from a vertex $u=(g,0)$ to a vertex $v=(h,0)$ at some time $t$ if and only if the following conditions hold:
\begin{itemize}
\item[\rm (a)]
$a=g^{-1}h$ is of order two, and then $n$ is even;
\item[\rm (b)]
for every $z\in G_0$, $\exp(\imath t\lambda_z^+)=\alpha+\beta$; for every $z\in G_1$, $\exp(\imath t\lambda_z^+)=\alpha-\beta$; for every $z\in G_0\setminus X$, $\exp(\imath t\lambda_z^-)=\alpha+\beta$; for every $z\in G_1\setminus X$, $\exp(\imath t\lambda_z^-)=\alpha-\beta$, where \begin{equation}\label{G0G1}
G_0=\{z\in G:\chi_a(z)=1\},~~~~G_1=\{z\in G:\chi_a(z)=-1\}.
\end{equation}
\end{itemize}
\end{theorem}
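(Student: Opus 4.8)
The plan is to reduce the statement, via Lemma~\ref{lemma1}(a), to a pointwise condition on the numbers $\exp(\imath t\lambda_z^\pm)$, using the entrywise formulas~(\ref{htuv}) together with Fourier inversion on $G$. Suppose first that $(\alpha,\beta)$-revival occurs from $u=(g,0)$ to $v=(h,0)$ at time $t$. By Lemma~\ref{lemma1}(a), $H(t)$ is block-diagonal with top-left block $M=\alpha I_n+\beta Q_n$, $Q_n$ a symmetric permutation matrix without fixed points, and with vanishing off-diagonal blocks. Since $G$ is abelian, the translation identity $H(t)_{(g',r),(f,s)}=H(t)_{(g'z,r),(fz,s)}$ established in the proof of Lemma~\ref{lemma1} shows that $M_{g',f}=\alpha\,[f=g']+\beta\,[f=g'a]$ for every $g'$, where $a:=g^{-1}h\neq 1$; hence $Q_n=\beta^{-1}(M-\alpha I_n)$ is precisely the permutation matrix of translation by $a$. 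Symmetry of $Q_n$ then forces $a=a^{-1}$, so $a$ has order two, and Lagrange's theorem gives $2\mid n$. This is condition~(a); for the converse one simply takes $Q_n$ to be translation by $a$, and order two makes it symmetric and fixed-point-free.

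With $a$ of order two fixed, the requirement that the top-left block equal $\alpha I_n+\beta Q_n$ says, via~(\ref{htuv}), that the function $b\mapsto\tfrac1n\sum_{z\in G}\bigl(c_z^+\exp(\imath t\lambda_z^+)+c_z^-\exp(\imath t\lambda_z^-)\bigr)\chi_z(b)$ on $G$ equals $\alpha\,[b=1]+\beta\,[b=a]$, whose Fourier transform at $\chi_z$ is $\alpha+\beta\overline{\chi_z(a)}=\alpha+\beta\chi_z(a)$ (a real number since $a^2=1$). By uniqueness of Fourier coefficients this is equivalent to
\[
c_z^+\exp(\imath t\lambda_z^+)+c_z^-\exp(\imath t\lambda_z^-)=\alpha+\beta\chi_z(a)\qquad(z\in G),
\]
the right-hand side being $\alpha+\beta$ for $z\in G_0$ and $\alpha-\beta$ for $z\in G_1$. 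In the same way, the vanishing of the off-diagonal block is equivalent, after Fourier inversion, to
\[
e_z^+\exp(\imath t\lambda_z^+)+e_z^-\exp(\imath t\lambda_z^-)=0\qquad(z\in G).
\]

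It remains to run the case analysis via Lemma~\ref{cz+cz-}. For $z\in X$ we have $c_z^+=1$, $c_z^-=0$ and $e_z^\pm=0$, so the second identity is vacuous and the first becomes $\exp(\imath t\lambda_z^+)=\alpha+\beta\chi_z(a)$, with no constraint on $\lambda_z^-$. For $z\notin X$ we have $e_z^+=-e_z^-\neq 0$, so the second identity forces $\exp(\imath t\lambda_z^+)=\exp(\imath t\lambda_z^-)$, and then the first identity together with $c_z^++c_z^-=1$ gives $\exp(\imath t\lambda_z^+)=\exp(\imath t\lambda_z^-)=\alpha+\beta\chi_z(a)$. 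Collecting these outcomes over $G_0$ and $G_1$ is exactly condition~(b). Conversely, assuming~(a) and~(b), the same computation run backwards rebuilds the two displayed identities, hence the block form of $H(t)$; evaluating~(b) at any $z\in G_0$ and any $z'\in G_1$ — both nonempty, each of size $n/2$ since $\chi_a$ has order two — forces $|\alpha+\beta|=|\alpha-\beta|=1$, i.e. $|\alpha|^2+|\beta|^2=1$ and $\alpha\bar\beta+\bar\alpha\beta=0$, so Lemma~\ref{lemma1}(a) applies and $(\alpha,\beta)$-revival occurs from $(g,0)$ to $(h,0)$.

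The main obstacle, and the only place where one must be careful, is precisely the split $z\in X$ versus $z\notin X$: when $\chi_z(S)=0$ the branch $\lambda_z^-$ corresponds to an eigenvector supported entirely on the second orbit, so it is genuinely unconstrained by revival taking place in the first orbit. This is exactly why condition~(b) must treat $\lambda_z^+$ and $\lambda_z^-$ asymmetrically — the $+$ branch is pinned down for all $z$, but the $-$ branch only for $z\notin X$ — and getting this bookkeeping right (together with checking $e_z^+\neq0$ for $z\notin X$, which follows from~(\ref{equation21})) is the substance of the argument; everything else is routine character arithmetic.
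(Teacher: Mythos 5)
Your proposal is correct, and although it runs inside the same framework as the paper (Lemma~\ref{lemma1}, the entrywise formulas (\ref{htuv}), Fourier inversion on $G$, and the case analysis via Lemma~\ref{cz+cz-}), the two decisive steps of the necessity are carried out by a genuinely different route. The paper obtains condition (a) by taking the squared modulus of the single in-orbit identity $c_z^+\exp(\imath t\lambda_z^+)+c_z^-\exp(\imath t\lambda_z^-)=\alpha+\beta\overline{\chi_a(z)}$, summing over $z\in G$ and using a positivity argument to force $\cos\bigl(t(\lambda_z^+-\lambda_z^-)\bigr)=1$ for $z\notin X$, and then invoking $\alpha\bar\beta+\bar\alpha\beta=0$ to conclude $\chi_a(z)=\overline{\chi_a(z)}$, hence $a^2=1$; condition (b) follows from this. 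You instead read (a) directly off the block structure: translation invariance identifies $Q_n$ with translation by $a$, and the symmetry $Q_n^T=Q_n$ recorded in Lemma~\ref{lemma1}(a) forces $a=a^{-1}$, with $2\mid n$ by Lagrange. For (b) you exploit the vanishing of the mixed-orbit entries, i.e.\ the second Fourier identity $e_z^+\exp(\imath t\lambda_z^+)+e_z^-\exp(\imath t\lambda_z^-)=0$, which together with $e_z^-=-e_z^+\neq 0$ for $z\notin X$ (correctly justified from (\ref{equation21}) and Lemma~\ref{cz+cz-}) gives $\exp(\imath t\lambda_z^+)=\exp(\imath t\lambda_z^-)$ at once, while the $z\in X$ branch leaves $\lambda_z^-$ unconstrained, exactly matching the asymmetry in (b). Your route is shorter and avoids the modulus-squared/summation computation entirely, at the price of using more of Lemma~\ref{lemma1}(a) (the whole row of $u$, including the off-diagonal block, and the symmetry of $Q_n$), whereas the paper's necessity uses only the in-orbit identity (\ref{equation4}) together with $\alpha\bar\beta+\bar\alpha\beta=0$. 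The sufficiency is the same direct verification in both treatments; your remark that (b) alone forces $|\alpha+\beta|=|\alpha-\beta|=1$, hence $|\alpha|^2+|\beta|^2=1$ and $\alpha\bar\beta+\bar\alpha\beta=0$, correctly closes that direction, and in fact the reconstructed row of $u$ already witnesses revival via (\ref{FRCAY-EQUATION3}), so the full block form of Lemma~\ref{lemma1}(a) is not strictly needed there.
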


\begin{proof}
First, we prove the necessity. Suppose that $(\alpha,\beta)$-revival occurs on $\Gamma$ from a vertex $u=(g,0)$ to a vertex $v=(h,0)$.  Let $w=(f,0)$ be an arbitrarily vertex. By (\ref{htuv}), we have
\begin{align}\label{htuw}
H(t)_{u,w}=\frac{1}{n}\sum\limits_{z\in G}\left(c_z^+\exp(\imath t \lambda_z^+ )+c_z^-\exp(\imath t\lambda_z^- )\right)\chi_z(g^{-1}f)
=\left\{
\begin{array}{ccc}
\alpha, &\text{if}~~w=u,\\[0.2cm]
\beta,  &\text{if}~~w=v,\\[0.2cm]
0,      &\text{otherwise}.
\end{array}\right.
\end{align}
Denote $a=g^{-1}h$. According to (\ref{htuw}) and its Fourier inversion, we get
\begin{equation}\label{equation4}
c_z^+\exp(\imath t \lambda_z^+)+c_z^-\exp(\imath t\lambda_z^-)=\alpha+\beta\overline{\chi_a(z)},~~\forall z\in G.
\end{equation}
Hence,
\begin{align}\label{equation1}\nonumber
&\left(c_z^+\exp(\imath t \lambda_z^+)+c_z^-\exp(\imath t\lambda_z^-)\right)\left(\overline{c_z^+\exp(\imath t \lambda_z^+)+c_z^-\exp(\imath t\lambda_z^-)}\right)\\
&=(\alpha+\beta\overline{\chi_a(z)})(\overline{\alpha+\beta\overline{\chi_a(z)}}),~~\forall z\in G.
\end{align}
Since $c_z^+,c_z^-\in \mathbb{R}$, the left side of (\ref{equation1}) is
\begin{align*}
&\left(c_z^+\exp(\imath t \lambda_z^+)+c_z^-\exp(\imath t\lambda_z^-)\right)\left(c_z^+\exp(-\imath t \lambda_z^+)+c_z^-\exp(-\imath t\lambda_z^-)\right)\\
&={c_z^+}^2+c_z^+c_z^-\exp(\imath t (\lambda_z^+-\lambda_z^-)+c_z^-c_z^+\exp(-\imath t (\lambda_z^+-\lambda_z^-)+{c_z^-}^2\\
&={c_z^+}^2+{c_z^-}^2+2c_z^+c_z^-\cos(t (\lambda_z^+-\lambda_z^-))\\
&=(c_z^++c_z^-)^2-2c_z^+c_z^-+2c_z^+c_z^-\cos(t (\lambda_z^+-\lambda_z^-))\\
&=1-2c_z^+c_z^-(1-\cos(t (\lambda_z^+-\lambda_z^-))).
\end{align*}
The right side of (\ref{equation1}) is
$$(\alpha+\beta\overline{\chi_a(z)})(\bar{\alpha}+\bar{\beta}\chi_a(z))
=1+\overline{\chi_a(z)}\bar{\alpha}\beta+\chi_a(z)\alpha\bar{\beta}.$$
Therefore, we have
\begin{equation}\label{equation2}
-2c_z^+c_z^-(1-\cos(t (\lambda_z^+-\lambda_z^-)))=\overline{\chi_a(z)}\bar{\alpha}\beta+\chi_a(z)\alpha\bar{\beta},~~\forall z\in G.
\end{equation}
We claim that $\chi_a(z)=\overline{\chi_a(z)}$, which is proved by the following two cases.

\noindent\emph{Case 1.} If $z\in X$, then $\chi_z(S)=0$. By Lemma \ref{cz+cz-} (a), $c_z^+=1$, $c_z^-=0$. Thus
$$\overline{\chi_a(z)}\bar{\alpha}\beta+\chi_a(z)\alpha\bar{\beta}=0,~~z\in X.$$
Combing with $\alpha \bar{\beta}+\bar{\alpha}\beta=0$ in Lemma \ref{lemma1}, we have
$$\chi_a(z)=\overline{\chi_a(z)},~~z\in X.$$

\noindent\emph{Case 2.} If $z\notin X$, then $\chi_z(S)\not=0$. By (\ref{equation2}), we have
\begin{equation}\label{equation2-1-1}
\sum_{z\in G}-2c_z^+c_z^-(1-\cos(t (\lambda_z^+-\lambda_z^-))) = \sum_{z\in G}\left(\overline{\chi_a(z)}\bar{\alpha}\beta+\chi_a(z)\alpha\bar{\beta}\right).
\end{equation}
Note that $a\not=1$. By (\ref{Sum=0-1}), the right hand side of (\ref{equation2-1-1}) is $0$. By Lemma \ref{cz+cz-} (c), (\ref{equation2-1-1})  is mounted to
\begin{align}\label{sum1-cos}\nonumber
0
&=-2\sum\limits_{z\in G}\frac{|\chi_z(S)|^2}{4|\chi_z(S)|^2+x_z^2}\left(1-\cos(t(\lambda_z^+-\lambda_z^-))\right)\\
&=-2\sum\limits_{z\notin X}\frac{|\chi_z(S)|^2}{4|\chi_z(S)|^2+x_z^2}\left(1-\cos(t(\lambda_z^+-\lambda_z^-))\right).
\end{align}
Note that \begin{equation*}
\frac{|\chi_z(S)|^2}{4|\chi_z(S)|^2+x_i^2}\left(1-\cos(t(\lambda_z^+-\lambda_z^-))\right)\geq0,~~z\notin X.
\end{equation*}
Thus, (\ref{sum1-cos}) implies that
\begin{equation*}
\frac{|\chi_z(S)|^2}{4|\chi_z(S)|^2+x_i^2}\left(1-\cos(t(\lambda_z^+-\lambda_z^-))\right)=0,~~z\notin X.
\end{equation*}
Since
$$
\frac{|\chi_z(S)|^2}{4|\chi_z(S)|^2+x_i^2}\not=0,
$$
we have
\begin{equation}\label{1-cos=0}
1-\cos(t(\lambda_z^+-\lambda_z^-))=0, ~~z\notin X.
\end{equation}
By  (\ref{equation2}), we have
$$\overline{\chi_a(z)}\bar{\alpha}\beta+\chi_a(z)\alpha\bar{\beta}=0,~~z\notin X.$$
Combing with $\alpha \bar{\beta}+\bar{\alpha}\beta=0$ in Lemma \ref{lemma1} again, we get
$$\chi_a(z)=\overline{\chi_a(z)},~~z\notin X.$$

Now, $\chi_a(z)=\overline{\chi_a(z)}$ holds for every $z\in G$. Then
$$\chi_a(z)=\pm1,~~\forall z\in G.$$
Consequently, $a$ is of order two, and then $n$ is even. Thus $G$ can be written as $G=G_0\cup G_1$  and $|G_0|=|G_1|=\frac{n}{2}$, where $G_0$ and $G_1$ are defined in (\ref{G0G1}). By (\ref{equation4}), we have
\begin{align}\label{equation5}
c_z^+\exp(\imath t \lambda_z^+)+c_z^-\exp(\imath t \lambda_z^-)=\left\{
\begin{array}{cccc}
\alpha+\beta, &\text{if}~~z\in G_0,\\[0.2cm]
\alpha-\beta, &\text{if}~~z\in G_1.
\end{array}
\right.
\end{align}

\noindent For $z\notin X$, (\ref{1-cos=0}) implies that
\begin{equation}\label{equation10}
(\lambda_z^+-\lambda_z^-)t=2k\pi, ~~\text{for~some}~k\in \mathbb{Z},~\text{all}~z\notin X,
\end{equation}
that is,
$$
\exp(\imath t \lambda_z^+)=\exp(\imath t \lambda_z^-),~~ \forall z\notin X.
$$
Due to (\ref{equation5}) and Lemma \ref{cz+cz-} (c), it holds that
\begin{align*}
c_z^+\exp(\imath t \lambda_z^+)+c_z^-\exp(\imath t \lambda_z^-)=\exp(\imath t \lambda_z^+)=\exp(\imath t \lambda_z^-)=\left\{
\begin{array}{cccc}
\alpha+\beta, &\text{if}~~z\in G_0\setminus X,\\[0.2cm]
\alpha-\beta, &\text{if}~~z\in G_1\setminus X.
\end{array}
\right.
\end{align*}

\noindent For $z\in X$, recall Lemma \ref{cz+cz-} (a) that $c_z^+=1$, $c_z^-=0$. By (\ref{equation5}), we have
\begin{align*}
\exp(\imath t \lambda_z^+)=\left\{
\begin{array}{cccc}
\alpha+\beta, &\text{if}~~z\in G_0\cap X,\\[0.2cm]
\alpha-\beta, &\text{if}~~z\in G_1\cap X.
\end{array}
\right.
\end{align*}

%\cha{If $r=s=1$, the proof is analogous to that of $r=s=0$. Hence, we omit the details here. }{}
%\xg{Double check!}\wj{have checked again, and revised.}

Next, we prove the sufficiency. For an arbitrary vertex $u=(g,0)\in \Gamma$, recall Lemma \ref{cz+cz-} that $c_z^++c_z^-=1$ for $z\in G$ and $c_z^+=1,c_z^-=0$ for $z\in X$. By (\ref{htuv}), we have
\begin{align*}
H(t)_{u,u}&=\frac{1}{n}\sum\limits_{z\in G}\left(c_z^+\exp(\imath t \lambda_z^+ )+c_z^-\exp(\imath t\lambda_z^- )\right)\\
          &=\frac{1}{n}\left(\sum\limits_{z\in G_0\setminus X}(\alpha+\beta)+\sum\limits_{z\in G_1\setminus X}(\alpha-\beta)+\sum\limits_{z\in G_0\cap X}(\alpha+\beta)+\sum\limits_{z\in G_1\cap X}(\alpha-\beta)\right)\\
          &=\frac{1}{n}\left(\sum\limits_{z\in G_0}(\alpha+\beta)+\sum\limits_{z\in G_1}(\alpha-\beta)\right)\\
          &=\alpha.
\end{align*}

Let $v=(g\cdot a,0)$. Recall that $a$ is of order two. Then $u\not=v$. By (\ref{htuv}) and (\ref{G0G1}), we have
\begin{align*}
H(t)_{u,v}&=\frac{1}{n}\sum\limits_{z\in G}\left(c_z^+\exp(\imath t \lambda_z^+ )+c_z^-\exp(\imath t\lambda_z^- )\right)\chi_z(a)\\
          &=\frac{1}{n}\left(\sum\limits_{z\in G_0}(\alpha+\beta)\chi_z(a)+\sum\limits_{z\in G_1}(\alpha-\beta)\chi_z(a)\right)\\[0.2cm]
          &=\frac{1}{n}\left(\frac{n}{2}(\alpha+\beta)-\frac{n}{2}(\alpha-\beta)\right)\\[0.2cm]
          &=\beta.
\end{align*}

Let $w=(g\cdot b,0)$ for any $b\neq 1,a$. Then $w\not=u$ and $w\not=v$. By (\ref{htuv}), (\ref{G0G1}), (\ref{Sum=0-0}) and (\ref{Sum=0-1}), we have
\begin{align*}
H(t)_{u,w}&=\frac{1}{n}\sum\limits_{z\in G}\left(c_z^+\exp(\imath t \lambda_z^+ )+c_z^-\exp(\imath t\lambda_z^- )\right)\chi_z(b)\\
          &=\frac{1}{n}\left(\sum\limits_{z\in G_0}(\alpha+\beta)\chi_z(b)+\sum\limits_{z\in G_1}(\alpha-\beta)\chi_z(b)\right)\\[0.2cm]
          &=\frac{\alpha}{n}\sum\limits_{z\in G}\chi_z(b)+\frac{\beta}{n}\left(\sum\limits_{z\in G_0}\chi_z(b)-\sum\limits_{z\in G_1}\chi_z(b)\right)\\[0.2cm]
          &=\frac{\beta}{n}\left(\sum\limits_{z\in G_0\cup G_1}\frac{1+\overline{\chi_z(a)}}{2}\chi_z(b)-\sum\limits_{z\in G_0\cup G_1}\frac{1-\overline{\chi_z(a)}}{2}\chi_z(b)\right)\\[0.2cm]
          &=\frac{\beta}{2n}\left(\sum\limits_{z\in G}\left(\chi_z(b)+ \overline{\chi_z(a)}\chi_z(b)-\chi_z(b) +\overline{\chi_z(a)}\chi_z(b)\right)\right)\\[0.2cm]
          &=\frac{\beta}{n}\sum\limits_{z\in G}\overline{\chi_z(a)}\chi_z(b)\\[0.2cm]
          &=0.
\end{align*}

Let $w=(g\cdot b,1)$ for any $b\in G$.  Then $w\not= u$ and $w\not= v$. Note Lemma \ref{cz+cz-} that $e_z^+=e_z^-=0$ for $z\in X$, and  $e_z^++e_z^-=0$ for $z\in G$. By (\ref{htuv}), we have
\begin{align*}
H(t)_{u,w}&=\frac{1}{n}\sum\limits_{z\in G}\left(e_z^+\exp(\imath t \lambda_z^+ )+e_z^-\exp(\imath t\lambda_z^- )\right)\chi_z(b)\\
          &=\frac{1}{n}\left(\sum\limits_{z\in G_0\setminus X}(e_z^++e_z^-)(\alpha+\beta)\chi_z(b)+\sum\limits_{z\in G_1\setminus X}(e_z^++e_z^-)(\alpha-\beta)\chi_z(b)\right)\\[0.2cm]
          &=0.
\end{align*}

Therefore, $(\alpha,\beta)$-revival occurs on $\Gamma$ from a vertex $u=(g,0)$ to a vertex $v=(h,0)$ at time $t$.

%\cha{If $u=(g,1)$,  the proof is similar to that of $u=(g,0)$. Hence, we omit the %details here.}{} \xg{Double check!}\wj{have checked again}

This completes the proof.
\qed\end{proof}

\begin{theorem}\label{maintheorem-case2}
Let $\Gamma=\SC(G,R,L,S)$ be a semi-Cayley graph over an abelian group $G$ of order $n$. Suppose that $\alpha$ and $\beta$ are two nonzero complex numbers. Let
$$
X=\{z\in G:\chi_z(S)=0\}.
$$
Then $(\alpha,\beta)$-revival occurs on $\Gamma$ from a vertex $u=(g,1)$ to a vertex $v=(h,1)$ at some time $t$ if and only if the following conditions hold:
\begin{itemize}
\item[\rm (a)]
$a=g^{-1}h$ is of order two, and then $n$ is even;
\item[\rm (b)]
for every $z\in G_0$, $\exp(\imath t\lambda_z^-)=\alpha+\beta$; for every $z\in G_1$, $\exp(\imath t\lambda_z^-)=\alpha-\beta$; for every $z\in G_0\setminus X$, $\exp(\imath t\lambda_z^+)=\alpha+\beta$; for every $z\in G_1\setminus X$, $\exp(\imath t\lambda_z^+)=\alpha-\beta$, where \begin{equation*}
G_0=\{z\in G:\chi_a(z)=1\},~~~~G_1=\{z\in G:\chi_a(z)=-1\}.
\end{equation*}
\end{itemize}
\end{theorem}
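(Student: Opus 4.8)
The plan is to mirror the proof of Theorem \ref{maintheorem} almost verbatim, exploiting the symmetry between the two orbits of the semi-Cayley graph. The key structural fact is that swapping the roles of the two orbits interchanges $c_z^\pm$ with $d_z^\pm$ and $\lambda_z^+$ with $\lambda_z^-$ in a suitable sense; concretely, the formula for $H(t)_{u,v}$ in the case $r=s=1$ (the second line of (\ref{htuv})) is obtained from the case $r=s=0$ (the first line) by replacing $c_z^\pm$ with $d_z^\pm$. So the whole argument of Theorem \ref{maintheorem} carries over once one keeps track of this substitution.

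First I would establish the necessity. Starting from $(\alpha,\beta)$-revival occurring from $u=(g,1)$ to $v=(h,1)$, I apply the second line of (\ref{htuv}) together with Fourier inversion to obtain the analogue of (\ref{equation4}), namely $d_z^+\exp(\imath t\lambda_z^+)+d_z^-\exp(\imath t\lambda_z^-)=\alpha+\beta\,\overline{\chi_a(z)}$ for all $z\in G$, where $a=g^{-1}h$. Taking moduli squared and using $d_z^++d_z^-=1$, $d_z^+d_z^-=\frac{|\chi_z(S)|^2}{4|\chi_z(S)|^2+x_z^2}$ from Lemma \ref{cz+cz-}(c) — exactly the same identities that $c_z^\pm$ satisfy — I get the identical equation (\ref{equation2}) with $c_z^\pm$ replaced by $d_z^\pm$. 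The $X$-versus-$G\setminus X$ case split then proceeds verbatim: on $X$ one has $d_z^-=1$, $d_z^+=0$ by Lemma \ref{cz+cz-}(a), while on $G\setminus X$ one sums over all $z$, uses (\ref{Sum=0-1}) to kill the right-hand side, and concludes $1-\cos(t(\lambda_z^+-\lambda_z^-))=0$ for $z\notin X$ since $d_z^+d_z^-\neq 0$ there. This forces $\chi_a(z)=\overline{\chi_a(z)}=\pm 1$ for all $z$, hence $a$ has order two and $n$ is even, giving (a) and the partition $G=G_0\cup G_1$. Then from the analogue of (\ref{equation4}) one reads off condition (b): for $z\notin X$ the relation $\exp(\imath t\lambda_z^+)=\exp(\imath t\lambda_z^-)$ together with $d_z^++d_z^-=1$ yields $\exp(\imath t\lambda_z^\pm)=\alpha\pm\beta$ according as $z\in G_0$ or $G_1$; for $z\in X$, only the $\lambda_z^-$ equation survives (since $d_z^-=1$), giving $\exp(\imath t\lambda_z^-)=\alpha\pm\beta$ on $G_0\cap X$ and $G_1\cap X$ respectively. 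Assembling these gives precisely the statement of (b) in Theorem \ref{maintheorem-case2}.

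For sufficiency I would run the four computations of $H(t)_{u,u}$, $H(t)_{u,v}$ with $v=(g\cdot a,1)$, $H(t)_{u,w}$ with $w=(g\cdot b,1)$ for $b\neq 1,a$, and $H(t)_{u,w}$ with $w=(g\cdot b,0)$, using the second and fourth lines of (\ref{htuv}). The first three are word-for-word the corresponding computations in Theorem \ref{maintheorem} with $c_z^\pm$ replaced by $d_z^\pm$ and the hypotheses of (b) plugged in, again using $d_z^++d_z^-=1$ and $d_z^-=1$ on $X$; the orthogonality relations (\ref{Sum=0-0}) and (\ref{Sum=0-1}) finish them exactly as before. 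The mixed-orbit term $H(t)_{u,w}$ for $w=(g\cdot b,0)$ uses the $r=1,s=0$ line of (\ref{htuv}), which involves $\overline{e_z^\pm}$; since $\overline{e_z^+}+\overline{e_z^-}=0$ for all $z$ by Lemma \ref{cz+cz-}(b),(c) and $\overline{e_z^\pm}=0$ on $X$ by Lemma \ref{cz+cz-}(a), and since the hypothesis forces $\exp(\imath t\lambda_z^+)=\exp(\imath t\lambda_z^-)$ off $X$, the sum collapses to zero. Hence $(\alpha,\beta)$-revival occurs from $u=(g,1)$ to $v=(h,1)$ at time $t$.

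There is essentially no new obstacle: the only thing to be careful about is confirming that $d_z^\pm$ satisfies the \emph{same} algebraic identities as $c_z^\pm$ (which is Lemma \ref{cz+cz-}(c), stated symmetrically) and that the indices $+$ and $-$ on the surviving $\lambda$-equations for $z\in X$ are correctly swapped relative to Theorem \ref{maintheorem} — coming from $d_z^-=1$ rather than $c_z^+=1$. Given that, the proof is a routine transcription, so I would simply write ``The proof is analogous to that of Theorem \ref{maintheorem}, with $c_z^\pm$ replaced by $d_z^\pm$ throughout and the roles of $\lambda_z^+$ and $\lambda_z^-$ interchanged in the equations corresponding to $z\in X$; we omit the details,'' unless the referee prefers the computations spelled out.
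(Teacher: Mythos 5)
Your proposal is correct and coincides with the paper's intended argument: the paper itself omits this proof, stating only that it is analogous to Theorem \ref{maintheorem}, and your transcription with $c_z^\pm$ replaced by $d_z^\pm$ (so that on $X$ one has $d_z^-=1$, $d_z^+=0$, which is exactly why the roles of $\lambda_z^+$ and $\lambda_z^-$ are interchanged in condition (b)) together with the fourth line of (\ref{htuv}) for the mixed-orbit entries is precisely that analogue, carried out correctly.
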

\begin{proof}
The proof is similar to that of Theorem \ref{maintheorem}. Hence, we omit the details here.
\qed\end{proof}

%\wj{The proof is also analogous. Just because ``$c_z^+=1, c_z^-=0,z\in X$" %corresponds to ``$d_z^+=0, d_z^-=1,z\in X$", $\lambda_z^+$ and %$\lambda_z^-$ are swapped in the result.}

If $R=L$, by Lemma \ref{SC-Eigen-111} (a), the eigenvalues of $\Gamma=\SC(G,R,R,S)$ are
$$\lambda_z^+=\chi_z(R)+|\chi_z(S)|,~~~ \lambda_z^-=\chi_z(R)-|\chi_z(S)|,$$
which implies that $\lambda_z^+=\lambda_z^-=\chi_z(R)$ if $z\in X$. In this case, the sufficient condition for Theorem \ref{maintheorem} is the same as the sufficient condition for Theorem \ref{maintheorem-case2}. Thus $(\alpha,\beta)$-revival occurs on $\Gamma$ from a vertex $u=(g,0)$ to a vertex $v=(h,0)$ if and only if $(\alpha,\beta)$-revival occurs on $\Gamma$ from a vertex $u=(g,1)$ to a vertex $v=(h,1)$. By Lemma \ref{lemma1} (a) and (b), we obtain the following result.

\begin{cor}\label{cor3}
Let $\Gamma=\SC(G,R,R,S)$ be a semi-Cayley graph over an abelian group $G$ of order $n$. Suppose that $\alpha$ and $\beta$ are two nonzero complex numbers. Then $(\alpha,\beta)$-revival occurs on $\Gamma$ from a vertex $u=(g,r)$ to a vertex $v=(h,s)$  with $r=s$ if and only if $\Gamma$ has $(\alpha,\beta)$-revival in the same orbit.
\end{cor}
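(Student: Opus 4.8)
The reverse implication is immediate from the definitions: if $\Gamma$ has $(\alpha,\beta)$-revival in the same orbit then, in particular, $(\alpha,\beta)$-revival occurs from some vertex $u=(g,r)$ to a vertex $v=(h,s)$ with $r=s$. So the content is the forward direction, and the plan is to show that once $(\alpha,\beta)$-revival occurs from one vertex to a vertex in the same orbit, the block structure of $H(t)$ supplied by Lemma \ref{lemma1} forces it to occur from \emph{every} vertex of $\Gamma$.

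The first step is to exploit the simplification caused by $R=L$. By Lemma \ref{SC-Eigen-111}(a) one has $\lambda_z^{\pm}=\chi_z(R)\pm|\chi_z(S)|$, hence $\lambda_z^{+}=\lambda_z^{-}=\chi_z(R)$ for $z\in X:=\{z\in G:\chi_z(S)=0\}$, and $x_z=\chi_z(R)-\chi_z(L)=0$ for all $z$. Substituting $x_z=0$ into (\ref{equation21}) gives $c_z^{\pm}=d_z^{\pm}=\tfrac12$ for every $z\notin X$; for $z\in X$, Lemma \ref{cz+cz-}(a) gives $c_z^{+}=d_z^{-}=1$ and $c_z^{-}=d_z^{+}=0$, but since $\lambda_z^{+}=\lambda_z^{-}$ there one still has
$$c_z^{+}\exp(\imath t\lambda_z^{+})+c_z^{-}\exp(\imath t\lambda_z^{-})=\exp(\imath t\chi_z(R))=d_z^{+}\exp(\imath t\lambda_z^{+})+d_z^{-}\exp(\imath t\lambda_z^{-}).$$
So this identity holds for all $z\in G$, and comparing the first two lines of (\ref{htuv}) yields $H(t)_{(g,0),(h,0)}=H(t)_{(g,1),(h,1)}$ for all $g,h\in G$; equivalently, in the block form of Lemma \ref{lemma5} the two diagonal blocks of $H(t)$ coincide.

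The argument then closes at once. Suppose $(\alpha,\beta)$-revival occurs from $u=(g,0)$ to $v=(h,0)$ (the case $r=s=1$ being handled the same way, with Lemma \ref{lemma1}(b) in the role of (a)). By Lemma \ref{lemma1}(a), $H(t)=\left(\begin{smallmatrix}\alpha I_n+\beta Q_n & \mathbf{0}\\ \mathbf{0} & *\end{smallmatrix}\right)$ for a fixed-point-free symmetric permutation matrix $Q_n$; so the top-left block is $\alpha I_n+\beta Q_n$, and by the previous step the bottom-right block $*$ equals $\alpha I_n+\beta Q_n$ as well. Now Lemma \ref{lemma1}(b) (with $Q_n'=Q_n$) gives $(\alpha,\beta)$-revival from every vertex of the second orbit, and Lemma \ref{lemma1}(a) gives it from every vertex of the first orbit; hence $(\alpha,\beta)$-revival occurs from every vertex of $\Gamma$ to a vertex in the same orbit, which is exactly the assertion that $\Gamma$ has $(\alpha,\beta)$-revival in the same orbit.

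The only subtle point, and the closest thing to an obstacle, is the $z\in X$ case in the displayed identity: there the two weighted sums agree not because $c_z^{\pm}=d_z^{\pm}$ (which is false) but because $\lambda_z^{+}=\lambda_z^{-}$. An equivalent route — the one suggested by the remark preceding the corollary — is to note that when $R=L$ the full hypotheses (a)--(b) of Theorem \ref{maintheorem} and of Theorem \ref{maintheorem-case2} are literally the same system, so the necessity half of either theorem feeds the sufficiency half of the other and again produces revival from every vertex of both orbits; the $z\in X$ check reappears in the same form there.
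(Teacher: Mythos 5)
Your proof is correct. The paper's own justification is the remark immediately preceding the corollary: when $R=L$ one has $\lambda_z^+=\lambda_z^-=\chi_z(R)$ for $z\in X$, so condition (b) of Theorem \ref{maintheorem} and condition (b) of Theorem \ref{maintheorem-case2} become the same system; hence revival from $(g,0)$ to $(h,0)$ is equivalent to revival from $(g,1)$ to $(h,1)$, and Lemma \ref{lemma1}(a),(b) then upgrades this to revival from every vertex. You reach the same conclusion without invoking Theorems \ref{maintheorem} and \ref{maintheorem-case2} at all: from $x_z=0$ you get $c_z^{\pm}=d_z^{\pm}=\tfrac12$ for $z\notin X$, while on $X$ the coincidence $\lambda_z^+=\lambda_z^-$ compensates for $c_z^{\pm}\neq d_z^{\pm}$, so by (\ref{htuv}) the two diagonal blocks of $H(t)$ agree identically in $t$; combined with the block form of Lemma \ref{lemma1}(a) this forces the lower block to equal $\alpha I_n+\beta Q_n$ as well. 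This is a more economical, self-contained route (it uses only (\ref{htuv}), (\ref{equation21}), Lemma \ref{cz+cz-} and Lemma \ref{lemma1}), and it isolates exactly the same subtlety the paper's argument rests on, namely the $z\in X$ case; what the paper's route buys instead is the explicit spectral description of when such revival occurs, which is not needed for this corollary. One cosmetic point: the sufficiency statement of Lemma \ref{lemma1}(b) literally asserts revival from \emph{some} pair in the second orbit, whereas the definition of revival ``in the same orbit'' needs it from every vertex; as you implicitly use, this stronger claim follows from the block form itself, since every row of $\alpha I_n+\beta Q_n$ equals $\alpha\mathbf{e}_g+\beta\mathbf{e}_h$ for some $h\neq g$, $Q_n$ having no fixed points.
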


\subsection{Semi-Cayley graphs admitting fractional revival from a vertex $u=(g,r)$ to a vertex $v=(h,s)$ with $r\neq s$}

\begin{theorem}\label{maintheorem2}
Let $\Gamma=\SC(G,R,L,S)$ be a semi-Cayley graph over an abelian group $G$ of order $n$. Suppose that $\alpha$ and $\beta$ are two nonzero complex numbers. Let
$$
X=\{z\in G:\chi_z(S)=0\}.
$$
Then $(\alpha,\beta)$-revival occurs on $\Gamma$ from a vertex $u=(g,0)$ to a vertex $v=(h,1)$ at some time $t$ if and only if for all $z\in G$, the following conditions hold:
\begin{itemize}
\item[\rm (a)] $X=\emptyset$ and $\chi_z(S)\chi_a(z)\in \mathbb{R}$ with $a=g^{-1}h$;
\item[\rm (b)]
$\exp(\imath t \lambda_z^+)=\alpha+c_z^-(e_z^+)^{-1}\beta \overline{\chi_a(z)}$ and $\exp(\imath t \lambda_z^-)=\alpha-c_z^+(e_z^+)^{-1}\beta \overline{\chi_a(z)}$, where $c_z^+$, $c_z^-, e_z^+$ are defined as (\ref{equation21}).
\end{itemize}
\end{theorem}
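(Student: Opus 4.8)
The plan is to reduce the assertion, via Lemma~\ref{lemma1}(c), to a family of scalar equations indexed by $z\in G$ and then solve them. Put $a=g^{-1}h$. By Lemma~\ref{lemma1}(c), $(\alpha,\beta)$-revival occurs from $(g,0)$ to $(h,1)$ at time $t$ if and only if $H(t)$ has the block form~(\ref{equation34}), and translation-invariance of the two orbits forces the permutation $Q_n''$ to be the one sending $f\mapsto fa$. Substituting this block form into the entry formulas~(\ref{htuv}) and using Fourier inversion on $G$, the block form is equivalent to the three identities, for every $z\in G$,
\begin{align*}
c_z^{+}e^{\imath t\lambda_z^{+}}+c_z^{-}e^{\imath t\lambda_z^{-}}&=\alpha,\\
d_z^{+}e^{\imath t\lambda_z^{+}}+d_z^{-}e^{\imath t\lambda_z^{-}}&=-\tfrac{\bar\alpha\beta}{\bar\beta},\\
e_z^{+}e^{\imath t\lambda_z^{+}}+e_z^{-}e^{\imath t\lambda_z^{-}}&=\beta\,\overline{\chi_a(z)},
\end{align*}
coming respectively from the $(0,0)$-, $(1,1)$- and $(0,1)$-blocks; the $(1,0)$-block equation is automatic since $H(t)$ is symmetric. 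I will use freely from Lemma~\ref{cz+cz-} that $c_z^{+}+c_z^{-}=1$, $d_z^{+}=c_z^{-}$, $d_z^{-}=c_z^{+}$, and that $\chi_z(S)\neq 0$ implies $e_z^{-}=-e_z^{+}\neq 0$; and from~(\ref{equation21}) that, when $\chi_z(S)\neq 0$, $e_z^{+}$ is a strictly positive real multiple of $\chi_z(S)$ (because $x_z\in\mathbb{R}$ and $x_z+\sqrt{x_z^2+4|\chi_z(S)|^2}>0$), so that $(e_z^{+})^{-1}\overline{\chi_a(z)}$ is a strictly positive real multiple of $\overline{\chi_z(S)\chi_a(z)}$.

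For necessity, the third identity together with Lemma~\ref{cz+cz-}(a) forces $X=\emptyset$: if $\chi_z(S)=0$ then $e_z^{\pm}=0$, giving $0=\beta\,\overline{\chi_a(z)}$, impossible as $\beta\neq 0$. Thus for every $z$ the third identity reads $e^{\imath t\lambda_z^{+}}-e^{\imath t\lambda_z^{-}}=(e_z^{+})^{-1}\beta\,\overline{\chi_a(z)}$, and solving the resulting $2\times 2$ linear system together with the first identity and $c_z^{+}+c_z^{-}=1$ gives exactly condition~(b). It remains to extract the reality statement in~(a). Feeding (b) into the second identity and using $d_z^{\pm}=c_z^{\mp}$, the left side collapses to $\alpha+(c_z^{-}-c_z^{+})(e_z^{+})^{-1}\beta\,\overline{\chi_a(z)}$; multiplying by $\bar\beta$ yields
$$(c_z^{-}-c_z^{+})\,|\beta|^{2}\,(e_z^{+})^{-1}\overline{\chi_a(z)}=-(\alpha\bar\beta+\bar\alpha\beta)\in\mathbb{R}.$$
Since $(e_z^{+})^{-1}\overline{\chi_a(z)}$ is a positive real multiple of $\overline{\chi_z(S)\chi_a(z)}$, this forces $\chi_z(S)\chi_a(z)\in\mathbb{R}$ \emph{whenever} $c_z^{+}\neq c_z^{-}$.

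The one genuinely delicate point is the exceptional case $c_z^{+}=c_z^{-}$ (equivalently $x_z=0$), where the displayed identity degenerates to $\alpha\bar\beta+\bar\alpha\beta=0$. Here I argue instead from the fact that the right-hand sides of~(b) have modulus $1$: with $c_z^{\pm}=\tfrac12$ and $V=(e_z^{+})^{-1}\beta\,\overline{\chi_a(z)}$, subtracting $|\alpha+\tfrac12 V|^2=1$ from $|\alpha-\tfrac12 V|^2=1$ gives $\operatorname{Re}(\bar\alpha V)=0$, i.e.\ $\operatorname{Re}\!\big(\bar\alpha\beta\,\overline{\chi_z(S)\chi_a(z)}\big)=0$; and since $\alpha\bar\beta+\bar\alpha\beta=0$ with $\alpha,\beta\neq 0$ makes $\bar\alpha\beta$ a nonzero purely imaginary number, this forces $\chi_z(S)\chi_a(z)\in\mathbb{R}$ as well, completing necessity. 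For sufficiency I run the computation in reverse: assuming~(a) and~(b), the $(0,0)$- and $(0,1)$-block identities follow at once from $c_z^{+}+c_z^{-}=1$ and $e_z^{-}=-e_z^{+}$; for the $(1,1)$-block identity I use that~(a) makes $(e_z^{+})^{-1}\overline{\chi_a(z)}$ real, that the modulus-one conditions contained in~(b) give (by the same subtraction) $(c_z^{-}-c_z^{+})|\beta|^{2}(e_z^{+})^{-1}\overline{\chi_a(z)}=-(\alpha\bar\beta+\bar\alpha\beta)$, and that combining this with~(b) produces precisely $-\bar\alpha\beta/\bar\beta$. Hence $H(t)$ has the form~(\ref{equation34}), and Lemma~\ref{lemma1}(c) yields the claimed revival.
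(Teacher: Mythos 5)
Your argument is correct, and its skeleton coincides with the paper's: Fourier inversion of the entry formulas (\ref{htuv}) turns the block form into scalar identities indexed by $z\in G$, the $(0,1)$-identity together with Lemma \ref{cz+cz-}(a) forces $X=\emptyset$, and solving the $2\times 2$ system using $c_z^++c_z^-=1$ and $e_z^-=-e_z^+$ gives condition (b). Where you genuinely diverge is the reality statement in (a). The paper gets it uniformly in $z$: by Lemma \ref{lemma4} the reversed $(-\tfrac{\bar{\alpha}\beta}{\bar{\beta}},\beta)$-revival holds from $v$ to $u$, and the $(1,0)$-entry formula (whose coefficients are $\overline{e_z^\pm}$) yields $\exp(\imath t\lambda_z^+)-\exp(\imath t\lambda_z^-)=(\overline{e_z^+})^{-1}\beta\chi_a(z)$; comparing with $(e_z^+)^{-1}\beta\overline{\chi_a(z)}$ gives $\chi_z(S)\chi_a(z)\in\mathbb{R}$ with no case distinction. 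You instead use the $(1,1)$-block $-\tfrac{\bar{\alpha}\beta}{\bar{\beta}}I_n$ of (\ref{equation34}) together with $d_z^\pm=c_z^\mp$, which settles reality only when $c_z^+\neq c_z^-$, and you patch the degenerate case $x_z=0$ by the modulus-one subtraction; I checked both branches and they are sound (in the degenerate case the same identity supplies $\alpha\bar{\beta}+\bar{\alpha}\beta=0$, so $\bar{\alpha}\beta$ is purely imaginary and $\operatorname{Re}\bigl(\bar{\alpha}\beta\,\overline{\chi_z(S)\chi_a(z)}\bigr)=0$ indeed forces $\chi_z(S)\chi_a(z)\in\mathbb{R}$). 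The paper's route buys uniformity and brevity; yours avoids the $(1,0)$-entry formula entirely (you rightly note that block is determined by symmetry) and makes the role of the second orbit's diagonal explicit. Two small housekeeping points: the relations $d_z^+=c_z^-$ and $d_z^-=c_z^+$ are not stated in Lemma \ref{cz+cz-}, so justify them from (\ref{equation21}) (they follow from $\bigl(x_z+\sqrt{x_z^2+4|\chi_z(S)|^2}\bigr)\bigl(x_z-\sqrt{x_z^2+4|\chi_z(S)|^2}\bigr)=-4|\chi_z(S)|^2$); and in the sufficiency direction, invoking Lemma \ref{lemma1}(c) is slightly awkward when $g=h$, since then $Q_n''=I_n$ has fixed points, so it is cleaner to conclude as the paper does, by reading off the single row $H(t)\mathbf{e}_u=\alpha\mathbf{e}_u+\beta\mathbf{e}_v$ directly from your verified $(0,0)$- and $(0,1)$-identities, which your computation already provides.
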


\begin{proof}
We first prove the necessity. Suppose that $(\alpha, \beta)$-revival occurs from a vertex $u=(g,0)$ to a vertex $v=(h,1)$. For $w=(f,0)$, by (\ref{htuv}), we have
\begin{align*}
H(t)_{u,w}=\frac{1}{n}\sum\limits_{z\in G}\left(c_z^+\exp(\imath t \lambda_z^+ )+c_z^-\exp(\imath t\lambda_z^- )\right)\chi_z(g^{-1}f)
=\left\{
\begin{array}{cc}
\alpha, &\text{if}~~w=u,\\[0.2cm]
0,      &\text{otherwise}.
\end{array}\right.
\end{align*}
For $w=(f,1)$,
\begin{align*}
H(t)_{u,w}=\frac{1}{n}\sum\limits_{z\in G}\left(e_z^+\exp(\imath t \lambda_z^+ )+e_z^-\exp(\imath t\lambda_z^- )\right)\chi_z(g^{-1}f)
=\left\{
\begin{array}{cc}
\beta, &\text{if}~~w=v,\\[0.2cm]
0,      &\text{otherwise}.
\end{array}\right.
\end{align*}
Denote $a=g^{-1}h$. According to the Fourier inversion, we get
\begin{align}
\label{equation22} c_z^+\exp(\imath t \lambda_z^+)+c_z^-\exp(\imath t \lambda_z^-)&=\alpha, ~z\in G,\\[0.2cm]
\label{equation22-11} e_z^+\exp(\imath t \lambda_z^+)+e_z^-\exp(\imath t \lambda_z^-)&=\beta\overline{\chi_a(z)}, ~z\in G.
\end{align}

We claim that $X=\emptyset$. By contradiction, suppose that $X\neq \emptyset$. Then there exists a $z\in X$ such that $\chi_z(S)=0$. By Lemma \ref{cz+cz-} (a), we have $e_z^+=e_z^-=0$. By (\ref{equation22-11}), $\beta \overline{\chi_a(z)}=0$, a contradiction to (\ref{character}) and $\beta\neq0$. Therefore, $X=\emptyset$, and then by Lemma (\ref{cz+cz-}) (b), $e_z^++e_z^-=0$. Then (\ref{equation22-11}) leads to
\begin{equation}\label{equation23}
\exp(\imath t \lambda_z^+)-\exp(\imath t \lambda_z^-)=(e_z^+)^{-1}\beta\overline{\chi_a(z)}.
\end{equation}
Recall Lemma (\ref{cz+cz-}) (c) that $c_z^++c_z^-=1$. By (\ref{equation22}) and (\ref{equation23}), we get
$$\exp(\imath t \lambda_z^+)=\alpha+c_z^-(e_z^+)^{-1}\beta \overline{\chi_a(z)} \text{~and~} \exp(\imath t \lambda_z^-)=\alpha-c_z^+(e_z^+)^{-1}\beta \overline{\chi_a(z)},$$
yielding (b).

Recall that $(\alpha, \beta)$-revival occurs from $u=(g,0)$ to $v=(h,1)$. By Lemma \ref{lemma1}, $(-\frac{\bar{\alpha}\beta}{\bar{\beta}}, \beta)$-revival occurs from $v=(h,1)$ to $u=(g,0)$. For $w=(f,0)$, by (\ref{htuv}), we have
\begin{align*}
H(t)_{v,w}=\frac{1}{n}\sum\limits_{z\in G}\left(\overline{e_z^+}\exp(\imath t \lambda_z^+ )+\overline{e_z^-}\exp(\imath t\lambda_z^- )\right)\chi_z(h^{-1}f)
=\left\{
\begin{array}{cc}
\beta, &\text{if}~~w=u,\\[0.2cm]
0,      &\text{otherwise}.
\end{array}\right.
\end{align*}
According to the Fourier inversion, we get
$$
\overline{e_z^+}\exp(\imath t \lambda_z^+ )+\overline{e_z^-}\exp(\imath t\lambda_z^- )=\beta\overline{\chi_z(h^{-1}g)}=\beta\chi_a(z),~z\in G.
$$
By Lemma (\ref{cz+cz-}) (b), $e_z^++e_z^-=0$. Then $\overline{e_z^-}=-\overline{e_z^+}$. Substituting this into the above equation,  we have
$$\exp(\imath t \lambda_z^+)-\exp(\imath t \lambda_z^-)=(\overline{e_z^+})^{-1}\beta\chi_a(z),~z\in G.$$
Combining with (\ref{equation23}), we have
$$(\overline{e_z^+})^{-1}\beta\chi_a(z)=(e_z^+)^{-1}\beta\overline{\chi_a(z)}.$$
Note that $\beta\neq 0$. Then
$$(\overline{e_z^+})^{-1}\chi_a(z)=(e_z^+)^{-1}\overline{\chi_a(z)}.$$
By (\ref{equation21}), we have
$$(\overline{\chi_z(S)})^{-1}\chi_a(z)=(\chi_z(S))^{-1}\overline{\chi_a(z)},$$
that is,
$$\chi_z(S)\chi_a(z)=\overline{\chi_z(S)} \overline{\chi_a(z)},$$
which implies that $\chi_z(S)\chi_a(z)\in \mathbb{R}$, yielding (a).

%\cha{Similarly, if $(\alpha, \beta)$-revival occurs from a vertex $u=(g,1)$ to a %vertex $v=(h,0)$, one can easily verify that (a) and (b) are also satisfied.}{} %\xg{Double check!}

Next, we prove the sufficiency.
For a vertex $u=(g,0)\in \Gamma$, recall Lemma \ref{cz+cz-} that $c_z^++c_z^-=1$ for $z\in G$. By (\ref{htuv}), we have
\begin{align*}
H(t)_{u,u}&=\frac{1}{n}\sum\limits_{z\in G}\left(c_z^+\exp(\imath t \lambda_z^+ )+c_z^-\exp(\imath t\lambda_z^- )\right)\\
          &=\frac{1}{n}\sum\limits_{z\in G}\left(c_z^+\left(\alpha+c_z^-(e_z^+)^{-1}\beta\overline{\chi_a(z)}\right)
          +c_z^-\left(\alpha-c_z^+(e_z^+)^{-1}\beta\overline{\chi_a(z)}\right)\right)\\
          &=\frac{1}{n}\sum\limits_{z\in G}\left(c_z^+\alpha+c_z^-\alpha\right)\\
          &=\alpha.
\end{align*}

Let $v=(ga,1)$. Then $u\not= v$. Note that $X=\emptyset$, that is, $\chi_z(S)\not=0,~\forall z\in G$. Lemma (\ref{cz+cz-}) (b) implies that $e_z^++e_z^-=0$, that is, $e_z^-=- e_z^+$. By (\ref{htuv}), we have
\begin{align*}
H(t)_{u,v}&=\frac{1}{n}\sum\limits_{z\in G}\left(e_z^+\exp(\imath t\lambda_z^+)+e_z^-\exp(\imath t\lambda_z^- )\right)\chi_z(a)\\
&=\frac{1}{n}\sum\limits_{z\in G}e_z^+\left(\exp(\imath t \lambda_z^+ )-\exp(\imath t\lambda_z^- )\right)\chi_a(z)\\
          &=\frac{1}{n}\sum\limits_{z\in G}e_z^+\left(\alpha+c_z^-(e_z^+)^{-1}\beta\overline{\chi_a(z)}-
          \alpha+c_z^+(e_z^+)^{-1}\beta\overline{\chi_a(z)}\right)\chi_a(z)\\[0.2cm]
          &=\frac{1}{n}\sum\limits_{z\in G}e_z^+(e_z^+)^{-1}\beta\overline{\chi_a(z)}\chi_a(z)(c_z^++c_z^-)\\[0.2cm]
          &=\beta.
\end{align*}

Let $w=(gb,0)$, for every $b\neq 1$. Then $w\not= u$ and $w\not= v$. By (\ref{htuv}) and (\ref{Sum=0-1}), we have
\begin{align*}
H(t)_{u,w}&=\frac{1}{n}\sum\limits_{z\in G}\left(c_z^+\exp(\imath t \lambda_z^+ )+c_z^-\exp(\imath t\lambda_z^- )\right)\chi_z(b)\\
          &=\frac{1}{n}\sum\limits_{z\in G}\left(c_z^+(\alpha+c_z^-(e_z^+)^{-1}\beta\overline{\chi_a(z)})
          +c_z^-(\alpha-c_z^+(e_z^+)^{-1}\beta\overline{\chi_a(z)})\right)\chi_b(z)\\[0.2cm]
          &=\frac{\alpha}{n}\sum\limits_{z\in G}\chi_b(z)\\
          &=0.
\end{align*}

Let $w=(gb,1)$, for every $b\neq a$.  Then $w\not= u$ and $w\not= v$. By (\ref{htuv}) and (\ref{Sum=0-0}), we have
\begin{align*}
H(t)_{u,w}&=\frac{1}{n}\sum\limits_{z\in G}e_z^+\left(\exp(\imath t \lambda_z^+ )-\exp(\imath t\lambda_z^- )\right)\chi_z(b)\\
          &=\frac{1}{n}\sum\limits_{z\in G}e_z^+\left(\alpha+c_z^-(e_z^+)^{-1}\beta\overline{\chi_a(z)}-
          \alpha+c_z^+(e_z^+)^{-1}\beta\overline{\chi_a(z)}\right)\chi_b(z)\\[0.2cm]
          &=\frac{\beta}{n}\sum\limits_{z\in G}\overline{\chi_a(z)}\chi_b(z)\\[0.2cm]
          &=0.
\end{align*}

Therefore, $(\alpha,\beta)$-revival occurs on $\Gamma$ from a vertex $u=(g,0)$ to a vertex $v=(h,1)$ at time $t$.

%\cha{If $u=(g,1)$,  the proof is similar to that of $u=(g,0)$. Hence, we omit the %details here.}{} \xg{Double check!}

This completes the proof.
\qed\end{proof}

\begin{theorem}\label{maintheorem2-case2}
Let $\Gamma=\SC(G,R,L,S)$ be a semi-Cayley graph over an abelian group $G$ of order $n$. Suppose that $\alpha$ and $\beta$ are two nonzero complex numbers. Let
$$
X=\{z\in G:\chi_z(S)=0\}.
$$
Then $(\alpha,\beta)$-revival occurs on $\Gamma$ from a vertex $u=(g,1)$ to a vertex $v=(h,0)$ at some time $t$ if and only if for all $z\in G$, the following conditions hold:
\begin{itemize}
\item[\rm (a)] $X=\emptyset$ and $\chi_z(S)\overline{\chi_a(z)}\in \mathbb{R}$ with $a=g^{-1}h$;
\item[\rm (b)]
$\exp(\imath t \lambda_z^+)=\alpha+d_z^-(\overline{e_z^+})^{-1}\beta \overline{\chi_a(z)}$ and $\exp(\imath t \lambda_z^-)=\alpha-d_z^+(\overline{e_z^+})^{-1}\beta \overline{\chi_a(z)}$, where $d_z^+$, $d_z^-, e_z^+$ are defined as (\ref{equation21}).
\end{itemize}
\end{theorem}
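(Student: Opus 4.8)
First I would follow the proof of Theorem~\ref{maintheorem2} essentially verbatim, with the two orbits of $\Gamma$ interchanged. The one structural difference is that $u=(g,1)$ now lies in the second orbit, so by (\ref{htuv}) the row of $H(t)$ indexed by $u$ is controlled by $d_z^{\pm}$ at targets in the second orbit and by $\overline{e_z^{\pm}}$ at targets in the first orbit, whereas for $u=(g,0)$ the relevant coefficients were $c_z^{\pm}$ and $e_z^{\pm}$. Consequently every $c_z^{\pm}$ gets replaced by $d_z^{\pm}$ and every $e_z^{\pm}$ by $\overline{e_z^{\pm}}$, and this single change of conjugation is what propagates to turn the condition $\chi_z(S)\chi_a(z)\in\mathbb{R}$ of Theorem~\ref{maintheorem2}(a) into $\chi_z(S)\overline{\chi_a(z)}\in\mathbb{R}$ here.

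For necessity I would assume $(\alpha,\beta)$-revival from $u=(g,1)$ to $v=(h,0)$, set $a=g^{-1}h$, expand $H(t)_{(g,1),(f,1)}$ and $H(t)_{(g,1),(f,0)}$ using (\ref{htuv}), and apply the Fourier inversion in $f$ to obtain $d_z^{+}\exp(\imath t\lambda_z^{+})+d_z^{-}\exp(\imath t\lambda_z^{-})=\alpha$ and $\overline{e_z^{+}}\exp(\imath t\lambda_z^{+})+\overline{e_z^{-}}\exp(\imath t\lambda_z^{-})=\beta\,\overline{\chi_a(z)}$ for all $z\in G$. If some $z\in X$, then $e_z^{+}=e_z^{-}=0$ by Lemma~\ref{cz+cz-}(a), forcing $\beta\,\overline{\chi_a(z)}=0$, which contradicts $\beta\neq0$ and $|\chi_a(z)|=1$ (see (\ref{character})); hence $X=\emptyset$, and then $\overline{e_z^{-}}=-\overline{e_z^{+}}$ by Lemma~\ref{cz+cz-}(b). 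This gives $\exp(\imath t\lambda_z^{+})-\exp(\imath t\lambda_z^{-})=(\overline{e_z^{+}})^{-1}\beta\,\overline{\chi_a(z)}$, and solving this together with $d_z^{+}+d_z^{-}=1$ (Lemma~\ref{cz+cz-}(c)) and the first identity yields (b).

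To pin down the reality condition (a) I would use Lemma~\ref{lemma1}(c) (equivalently Lemma~\ref{lemma4}): $(-\tfrac{\bar\alpha\beta}{\bar\beta},\beta)$-revival occurs from $v=(h,0)$ to $u=(g,1)$ at the same $t$. Running the same Fourier-inversion computation along the row indexed by $(h,0)$ gives $e_z^{+}\exp(\imath t\lambda_z^{+})+e_z^{-}\exp(\imath t\lambda_z^{-})=\beta\,\chi_a(z)$, hence $\exp(\imath t\lambda_z^{+})-\exp(\imath t\lambda_z^{-})=(e_z^{+})^{-1}\beta\,\chi_a(z)$. Equating the two expressions found for $\exp(\imath t\lambda_z^{+})-\exp(\imath t\lambda_z^{-})$, cancelling $\beta$, and using that by (\ref{equation21}) $e_z^{+}$ is a nonzero real multiple of $\chi_z(S)$, I get $(\overline{\chi_z(S)})^{-1}\overline{\chi_a(z)}=(\chi_z(S))^{-1}\chi_a(z)$, i.e.\ $\chi_z(S)\,\overline{\chi_a(z)}=\overline{\chi_z(S)}\,\chi_a(z)$, which is exactly $\chi_z(S)\,\overline{\chi_a(z)}\in\mathbb{R}$.

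For sufficiency, assuming (a) and (b) (so every $e_z^{+}\neq0$ and $\overline{e_z^{-}}=-\overline{e_z^{+}}$), I would substitute (b) into the four relevant entries of the row of $H(t)$ at $u=(g,1)$ given by (\ref{htuv}): using $d_z^{+}+d_z^{-}=1$ makes $H(t)_{u,u}$ collapse to $\tfrac1n\sum_{z\in G}\alpha=\alpha$; using in addition $\overline{e_z^{-}}=-\overline{e_z^{+}}$, the entry $H(t)_{u,(h,0)}$ collapses to $\tfrac1n\sum_{z\in G}\beta\,|\chi_a(z)|^{2}=\beta$; for $b\neq1$, $H(t)_{u,(gb,1)}=\tfrac{\alpha}{n}\sum_{z\in G}\chi_z(b)=0$ by (\ref{Sum=0-1}); and for $b\neq a$, $H(t)_{u,(gb,0)}=\tfrac{\beta}{n}\sum_{z\in G}\overline{\chi_a(z)}\,\chi_z(b)=0$ by (\ref{Sum=0-0}) together with $\chi_z(g)=\chi_g(z)$. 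This yields $(\alpha,\beta)$-revival from $(g,1)$ to $(h,0)$ at time $t$, and unitarity of $H(t)$ gives $|\alpha|^{2}+|\beta|^{2}=1$ automatically. The only place needing genuine attention is the bookkeeping of complex conjugates described in the first paragraph; beyond that the argument is a faithful mirror of the proof of Theorem~\ref{maintheorem2}, and I expect no real obstacle.
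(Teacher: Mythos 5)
Your proposal is correct and is exactly the argument the paper intends: the paper omits the proof with the remark that it is ``similar to that of Theorem~\ref{maintheorem2}'', and your mirroring of that proof—replacing $c_z^{\pm}$ by $d_z^{\pm}$ and $e_z^{\pm}$ by $\overline{e_z^{\pm}}$, which correctly turns $\chi_z(S)\chi_a(z)\in\mathbb{R}$ into $\chi_z(S)\overline{\chi_a(z)}\in\mathbb{R}$—is precisely the intended adaptation, with the Fourier inversion, the use of Lemma~\ref{lemma4} for the reverse direction, and the orthogonality computations all matching the template of Theorem~\ref{maintheorem2}.
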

\begin{proof}
The proof is similar to that of Theorem \ref{maintheorem2}. Hence, we omit the details here.
\qed\end{proof}

If $\Gamma=\SC(G,R,L,S)$ is a semi-Cayley graph over an abelian group $G$ with $R=L$, Theorems \ref{maintheorem2} and \ref{maintheorem2-case2} imply the following result.

\begin{cor}\label{maincor}
Let $\Gamma=\SC(G,R,R,S)$ be a semi-Cayley graph over an abelian group $G$ of order $n$. Suppose that $\alpha$ and $\beta$ are two nonzero complex numbers. Let
$$
X=\{z\in G:\chi_z(S)=0\}
$$
 Then $(\alpha,\beta)$-revival occurs on $\Gamma$ from a vertex $u=(g,0)$ to a vertex $v=(h,1)$ at some time $t$(or $(\alpha,\beta)$-revival occurs on $\Gamma$ from vertex $v=(h,1)$ to vertex $u=(g,0)$ at some time $t$) if and only if for all $z\in G$, the following conditions hold:
\begin{itemize}
\item[\rm (a)] $X=\emptyset$ and $\chi_z(S)\chi_a(z)\in \mathbb{R}$ with $a=g^{-1}h$.
\item[\rm (b)] $\chi_a(z)=\pm\frac{|\chi_z(S)|}{\chi_z(S)}$  for every $z\in G$.
\item[\rm (c)] for every $z\in G\cap H_0$,
$\exp(\imath t \lambda_z^+)=\alpha+\beta$,~$\exp(\imath t \lambda_z^-)=\alpha-\beta$; for every $z\in G\cap H_1$, $\exp(\imath t \lambda_z^+)=\alpha-\beta$,~$\exp(\imath t \lambda_z^-)=\alpha+\beta$, where
\begin{equation}\label{equation24}
H_0=\{z\in\! G\!:\!\chi_z(S)\chi_a(z)\!=\!|\chi_z(S)|\},H_1=\{z\in\! G\!:\!\chi_z(S)\chi_a(z)\!=\!-|\chi_z(S)|\}.
\end{equation}
\end{itemize}
\end{cor}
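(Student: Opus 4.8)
The plan is to specialise Theorems~\ref{maintheorem2} and~\ref{maintheorem2-case2} to $R=L$ and show that, after setting $x_z=\chi_z(R)-\chi_z(L)=0$, both characterisations collapse to the conditions (a)--(c). First I would record the values of the auxiliary quantities at $x_z=0$: in either theorem condition (a) forces $X=\emptyset$, so $\chi_z(S)\neq0$ for every $z$, and then~(\ref{equation21}) gives
$$c_z^\pm=\tfrac12,\qquad d_z^\pm=\tfrac12,\qquad e_z^+=\frac{\chi_z(S)}{2|\chi_z(S)|},\qquad (e_z^+)^{-1}=\frac{2|\chi_z(S)|}{\chi_z(S)},\qquad (\overline{e_z^+})^{-1}=\frac{2|\chi_z(S)|}{\overline{\chi_z(S)}}.$$
Substituting $c_z^\pm=\tfrac12$ and $(e_z^+)^{-1}=2|\chi_z(S)|/\chi_z(S)$ into Theorem~\ref{maintheorem2}(b) turns it into $\exp(\imath t\lambda_z^\pm)=\alpha\pm\frac{|\chi_z(S)|}{\chi_z(S)}\beta\,\overline{\chi_a(z)}$ (coordinated signs).

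Next I would use condition (a) of Theorem~\ref{maintheorem2}. Since $|\chi_a(z)|=1$ and $\chi_z(S)\neq0$, the requirement $\chi_z(S)\chi_a(z)\in\mathbb{R}$ is equivalent to $\chi_z(S)\chi_a(z)=\pm|\chi_z(S)|$, i.e. $\chi_a(z)=\pm|\chi_z(S)|/\chi_z(S)$; this is exactly condition (b) of the corollary, and it yields the disjoint decomposition $G=H_0\cup H_1$ with $H_0,H_1$ as in~(\ref{equation24}). On $H_0$ one has $\frac{|\chi_z(S)|}{\chi_z(S)}\overline{\chi_a(z)}=\chi_a(z)\overline{\chi_a(z)}=1$, so the displayed equations become $\exp(\imath t\lambda_z^+)=\alpha+\beta$ and $\exp(\imath t\lambda_z^-)=\alpha-\beta$; on $H_1$ the factor equals $-1$ and one gets $\alpha-\beta$ and $\alpha+\beta$ --- this is condition (c). All steps are reversible: from (b) of the corollary one recovers $X=\emptyset$ and $\chi_z(S)\chi_a(z)\in\mathbb{R}$, hence Theorem~\ref{maintheorem2}(a), and (c) together with the substitutions above gives Theorem~\ref{maintheorem2}(b). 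Thus occurrence of $(\alpha,\beta)$-revival from $(g,0)$ to $(h,1)$ is equivalent to (a)--(c).

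For the parenthetical alternative I would apply Theorem~\ref{maintheorem2-case2} to $(\alpha,\beta)$-revival from $(h,1)$ to $(g,0)$; relabelling $g\leftrightarrow h$ in that theorem replaces $a=g^{-1}h$ by $a^{-1}$, and since $\overline{\chi_{a^{-1}}(z)}=\chi_a(z)$ its condition (a) again reads $X=\emptyset$ and $\chi_z(S)\chi_a(z)\in\mathbb{R}$. Its condition (b), after substituting $d_z^\pm=\tfrac12$ and $(\overline{e_z^+})^{-1}=2|\chi_z(S)|/\overline{\chi_z(S)}$, becomes $\exp(\imath t\lambda_z^\pm)=\alpha\pm\frac{|\chi_z(S)|}{\overline{\chi_z(S)}}\beta\,\chi_a(z)$, and since $\frac{|\chi_z(S)|^2}{\overline{\chi_z(S)}\,\chi_z(S)}=1$ this equals $\alpha\pm\beta$ on $H_0$ and $\alpha\mp\beta$ on $H_1$ --- once more conditions (a)--(c). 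Hence the two revival statements hold under exactly the same hypotheses, which is the content of the ``or''. The only delicate point is the bookkeeping of the complex conjugations --- keeping $\chi_a(z)$ and $\overline{\chi_a(z)}$ apart and correctly carrying out the $a\mapsto a^{-1}$ relabelling when invoking Theorem~\ref{maintheorem2-case2}; everything else is a direct substitution into~(\ref{equation21}) followed by elementary algebra.
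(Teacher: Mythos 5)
Your proposal is correct and follows essentially the same route as the paper: specialize Theorem~\ref{maintheorem2} (and Theorem~\ref{maintheorem2-case2} for the reversed direction) to $R=L$, compute $c_z^\pm=d_z^\pm=\tfrac12$ and $e_z^+=\chi_z(S)/(2|\chi_z(S)|)$ from (\ref{equation21}), and translate conditions (a)--(b) of those theorems into (a)--(c) of the corollary via the identity $\chi_z(S)\chi_a(z)=\pm|\chi_z(S)|$. Your write-up is if anything slightly more complete, since it spells out the reversibility of the substitutions and the $a\mapsto a^{-1}$ bookkeeping for the $(h,1)\to(g,0)$ case, which the paper dismisses as ``similar''.
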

\begin{proof}
If $u=(g,0)$ and $v=(h,1)$, then
(a) comes from Theorem \ref{maintheorem2} (a) immediately.

Recall (\ref{abpm}) that $x_z=\chi_z(R)-\chi_z(L)$. If $R=L$, then $x_z=0$. By (\ref{equation21}), we have $c_z^+=c_z^-=\frac{1}{2}$ and $e_z^+=\frac{\chi_z(S)}{2|\chi_z(S)|}$. Then
$$c_z^+(e_z^+)^{-1}\overline{\chi_a(z)}=c_z^-(e_z^+)^{-1}\overline{\chi_a(z)}=\frac{|\chi_z(S)|}{\chi_z(S)\chi_a(z)}\in \mathbb{R},~~\text{and~}\left|\frac{|\chi_z(S)|}{\chi_z(S)\chi_a(z)}\right|=1.$$
Therefore, $\chi_z(S)\chi_a(z)=\pm|\chi_z(S)|$, yielding (b).

By Theorem \ref{maintheorem2} (b), we have
\begin{align*}%\left\{
\begin{array}{cc}
\exp(\imath t \lambda_z^+)=\alpha+\beta,~\exp(\imath t\lambda_z^-)=\alpha-\beta, &\text{~for~} z\in G\cap H_0,\\[0.2cm]
\exp(\imath t \lambda_z^+)=\alpha-\beta,~\exp(\imath t\lambda_z^-)=\alpha+\beta, &\text{~for~} z\in G\cap H_1,
\end{array}%\right.
\end{align*}
yielding (c).

If $u=(h,1)$ and $v=(g,0)$, the proof is similar to that of $u=(g,0)$ and $v=(h,1)$. Hence, we omit the details here.

This completes the proof.
\qed\end{proof}

The next result says that if a semi-Cayley graph $\Gamma=\SC(G,R,L,S)$ over an abelian group $G$ has $(\alpha,\beta)$-revival between the different orbits, then the condition $R=L$ is necessary.

\begin{prop}\label{mainprop2}
Let $\Gamma=\SC(G,R,L,S)$ be a semi-Cayley graph over an abelian group $G$ of order $n$. Suppose that $\alpha$ and $\beta$ are two nonzero complex numbers. If $\Gamma$ has $(\alpha,\beta)$-revival between the different orbits, then $R=L$.
\end{prop}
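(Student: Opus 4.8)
The plan is to combine the block form of the transition matrix given by Lemma~\ref{lemma1}(c) with the characterization in Theorem~\ref{maintheorem2}. Since $\Gamma$ has $(\alpha,\beta)$-revival between the different orbits, in particular $(\alpha,\beta)$-revival occurs from some vertex $u=(g,0)$ to some vertex $v=(h,1)$, so Lemma~\ref{lemma1}(c) shows that $H(t)$ has diagonal blocks $\alpha I_n$ on the first orbit and $-\frac{\bar\alpha\beta}{\bar\beta}I_n$ on the second orbit, and off-diagonal block $\beta Q_n''$ for a fixed-point-free permutation matrix $Q_n''$; moreover Theorem~\ref{maintheorem2}(a) gives $X=\emptyset$, that is, $\chi_z(S)\neq0$ for every $z\in G$. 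The crucial observation is to use that the revival is demanded \emph{from every} vertex with the \emph{same} parameters $\alpha,\beta$: applying the hypothesis to a vertex $(g,1)$ of the second orbit forces $H(t)_{(g,1),(g,1)}=\alpha$, whereas the block form above gives $H(t)_{(g,1),(g,1)}=-\frac{\bar\alpha\beta}{\bar\beta}$. Hence $\alpha=-\frac{\bar\alpha\beta}{\bar\beta}$, equivalently $\alpha\bar\beta+\bar\alpha\beta=0$, and therefore \emph{both} diagonal blocks of $H(t)$ equal $\alpha I_n$.

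Next I would read off, by~(\ref{htuv}) and Fourier inversion over $\hat G$ (exactly as in the derivation of~(\ref{equation22})), the scalar identities imposed on each $z\in G$ by the two diagonal blocks being $\alpha I_n$, namely
\[
c_z^+\exp(\imath t\lambda_z^+)+c_z^-\exp(\imath t\lambda_z^-)=\alpha
\qquad\text{and}\qquad
d_z^+\exp(\imath t\lambda_z^+)+d_z^-\exp(\imath t\lambda_z^-)=\alpha .
\]
On the other hand, Theorem~\ref{maintheorem2}(b) (applicable because $(\alpha,\beta)$-revival occurs from $u=(g,0)$ to $v=(h,1)$) gives $\exp(\imath t\lambda_z^+)-\exp(\imath t\lambda_z^-)=(e_z^+)^{-1}\beta\,\overline{\chi_a(z)}$ with $a=g^{-1}h$; since $X=\emptyset$ we have $e_z^+\neq0$ by~(\ref{equation21}), and $\beta\neq0$, $|\chi_a(z)|=1$, so $\exp(\imath t\lambda_z^+)\neq\exp(\imath t\lambda_z^-)$ for every $z$. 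Subtracting the two displayed identities and using $c_z^++c_z^-=d_z^++d_z^-=1$ (Lemma~\ref{cz+cz-}(c)) gives $(c_z^+-d_z^+)\big(\exp(\imath t\lambda_z^+)-\exp(\imath t\lambda_z^-)\big)=0$, whence $c_z^+=d_z^+$ for every $z\in G$.

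Finally, since $\chi_z(S)\neq0$, the formulas~(\ref{equation21}) give, writing $\sigma_z=\sqrt{x_z^2+4|\chi_z(S)|^2}$,
\[
c_z^+-d_z^+
=\frac{(x_z+\sigma_z)^2-4|\chi_z(S)|^2}{(x_z+\sigma_z)^2+4|\chi_z(S)|^2}
=\frac{2x_z\,(x_z+\sigma_z)}{(x_z+\sigma_z)^2+4|\chi_z(S)|^2},
\]
and $x_z\in\mathbb{R}$ with $\sigma_z>|x_z|$ forces $x_z+\sigma_z>0$. Hence $c_z^+=d_z^+$ yields $x_z=\chi_z(R)-\chi_z(L)=0$ for every $z\in G$. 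Since the characters of $G$ form a basis for the complex-valued functions on $G$ (equivalently, by Fourier inversion), the indicator functions of $R$ and $L$ coincide, i.e.\ $R=L$, as required.

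The only step I expect to be non-routine is the observation in the first paragraph: because $(\alpha,\beta)$-revival is required from the vertices of the second orbit as well, the scalar $-\frac{\bar\alpha\beta}{\bar\beta}$ must collapse onto $\alpha$, which symmetrizes the two diagonal blocks of $H(t)$. Once this symmetry is in place, the comparison of the $c_z^\pm$-identity with the $d_z^\pm$-identity is forced, and the remaining work—the Fourier inversions and the simplification of~(\ref{equation21})—is routine.
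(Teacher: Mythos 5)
Your proposal is correct and follows essentially the same route as the paper: it forces $-\frac{\bar\alpha\beta}{\bar\beta}=\alpha$ from the revival occurring out of the second orbit, derives the two diagonal identities $c_z^+\exp(\imath t\lambda_z^+)+c_z^-\exp(\imath t\lambda_z^-)=\alpha=d_z^+\exp(\imath t\lambda_z^+)+d_z^-\exp(\imath t\lambda_z^-)$, rules out $\exp(\imath t\lambda_z^+)=\exp(\imath t\lambda_z^-)$ via Theorem~\ref{maintheorem2}(b), concludes $x_z=0$ from (\ref{equation21}) (your expression for $c_z^+-d_z^+$ simplifies to the paper's $x_z/\sqrt{x_z^2+4|\chi_z(S)|^2}$), and finishes with Fourier orthogonality to get $R=L$. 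The only cosmetic differences are that you obtain the $d$-identity from the block form in Lemma~\ref{lemma1}(c) rather than by repeating the derivation of (\ref{equation22}), which is fine.
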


\begin{proof}
Recall that if $(\alpha,\beta)$-revival occurs from a vertex $u=(g,0)$ to a vertex $v=(h,1)$, then $(-\frac{\bar{\alpha}\beta}{\bar{\beta}}, \beta)$-revival occurs from vertex $v=(h,1)$ to vertex $u=(g,0)$. If $\Gamma$ has $(\alpha,\beta)$-revival between the different orbits, then $-\frac{\bar{\alpha}\beta}{\bar{\beta}}=\alpha$. Recall (\ref{equation22}) in Theorem \ref{maintheorem2} that
$$
c_z^+\exp(\imath t \lambda_z^+)+c_z^-\exp(\imath t \lambda_z^-)=\alpha,~z\in G.
$$
Using the similar method to obtain (\ref{equation22}), one can easily verify that
$$
d_z^+\exp(\imath t \lambda_z^+)+d_z^-\exp(\imath t \lambda_z^-)=-\frac{\bar{\alpha}\beta}{\bar{\beta}}=\alpha,~z\in G.
$$
Hence,
$$
(c_z^+-d_z^+)\exp(\imath t \lambda_z^+)=(d_z^--c_z^-)\exp(\imath t \lambda_z^-),~z\in G.
$$
Theorem \ref{maintheorem2} (a) implies that $\chi_z(S)\neq 0, ~\forall z\in G$. By (\ref{equation21}), we have
$$
c_z^+-d_z^+=d_z^--c_z^-=\frac{x_z}{\sqrt{4|\chi_z(S)|^2+x_z^2}}.
$$
Thus,
$$x_z(\exp(\imath t \lambda_z^+)-\exp(\imath t \lambda_z^-))=0,~z\in G.$$
If $\exp(\imath t \lambda_z^+)=\exp(\imath t \lambda_z^-)$, by Theorem \ref{maintheorem2} (b) and the fact that $c_z^++c_z^-=1$ in Lemma \ref{cz+cz-} (c), we have $(e_z^+)^{-1}\beta\overline{\chi_a(z)}=0$, a contradiction. Hence we have $x_z=0$ for all $z\in G$. That is, $\chi_z(R)=\chi_z(L)$ for all $z\in G$.

Define
$$
\delta_{i,j}=\left\{\begin{array}{ll}
                     1, &  \text{if~} i=j, \\[0.2cm]
                     0, & \text{if~} i\neq j.
                   \end{array}\right.
$$
By (\ref{Sum=0-0}), for any $x\in G$, we have
$$\sum\limits_{z\in G}\overline{\chi_z(x)}\chi_z(R)=\sum\limits_{r\in R}\sum\limits_{z\in G}\overline{\chi_z(x)}\chi_z(r)=n\sum\limits_{r\in R}\delta_{x,r},$$
and
$$\sum\limits_{z\in G}\overline{\chi_z(x)}\chi_z(L)=\sum\limits_{l\in L}\sum\limits_{z\in G}\overline{\chi_z(x)}\chi_z(l)=n\sum\limits_{l\in L}\delta_{x,l},$$
which implies that $\sum\limits_{r\in R}\delta_{x,r}=\sum\limits_{l\in L}\delta_{x,l}$ for all $x\in G$. Thus, $R=L$.
\qed\end{proof}

By Corollary \ref{maincor}, if $R=L$, then $(\alpha,\beta)$-revival occurs on $\Gamma$ from a vertex $u=(g,0)$ to a vertex $v=(h,1)$ if and only if $(\alpha,\beta)$-revival occurs on $\Gamma$ from $v=(h,1)$ to $u=(g,0)$.
By Lemma \ref{lemma1} (c), we have
$$\alpha=-\frac{\bar{\alpha}\beta}{\bar{\beta}}.$$
Thus, we conclude that $\Gamma$ has $(\alpha,\beta)$-revival between the different orbits.

%Thus we have the following result.

%\cha{By (\ref{equation21}), if $R=L$, then %$c_z^+=c_z^-=d_z^+=d_z^-=\frac{1}{2}$.  If $(\alpha,\beta)$-revival occurs on %$\Gamma$ from a vertex $u=(g,0)$ to a vertex $v=(h,1)$, then\\
%$\alpha=c_z^+\exp(\imath t \lambda_z^+)+c_z^-\exp(\imath t %\lambda_z^-)=d_z^+\exp(\imath t \lambda_z^+)+d_z^-\exp(\imath t %\lambda_z^-)=-\frac{\bar{\alpha}\beta}{\bar{\beta}}.$\\
%By Lemma \ref{lemma1} (c), we conclude that $\Gamma$ has %$(\alpha,\beta)$-revival from every vertex. Thus we have the following result.}{}

\begin{cor}\label{cor2}
Let $\Gamma=\SC(G,R,L,S)$ be a semi-Cayley graph over an abelian group $G$ of order $n$. Suppose that $\alpha$ and $\beta$ are two nonzero complex numbers. Then $\Gamma$ has $(\alpha,\beta)$-revival between the different orbits if and only if $(\alpha,\beta)$-revival occurs on $\Gamma$ from a vertex $u=(g,r)$ to a vertex $v=(h,s)$ with $r\neq s$ and $R=L$.
\end{cor}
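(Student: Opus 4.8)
The plan is to prove the two implications separately, the forward one being an immediate consequence of Proposition~\ref{mainprop2} and the backward one a translation argument built on Corollary~\ref{maincor}. For the forward direction, suppose $\Gamma$ has $(\alpha,\beta)$-revival between the different orbits. By definition this already furnishes a pair of distinct vertices $u=(g,r)$, $v=(h,s)$ with $r\neq s$ such that $(\alpha,\beta)$-revival occurs from $u$ to $v$; and Proposition~\ref{mainprop2} gives $R=L$. So this half is done with no further work.

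For the backward direction, assume $R=L$ and that $(\alpha,\beta)$-revival occurs from some $u=(g,r)$ to some $v=(h,s)$ with $r\neq s$. We first treat the case $r=0$, $s=1$, the case $r=1$, $s=0$ being entirely analogous (using Theorem~\ref{maintheorem2-case2} in place of Theorem~\ref{maintheorem2}). Set $a=g^{-1}h$. Since $R=L$, Corollary~\ref{maincor} applies, so conditions (a)--(c) of that corollary hold. The key observation is that each of these conditions is phrased only in terms of $a$, the set $X$, the numbers $\chi_z(S)$, the eigenvalues $\lambda_z^{\pm}$, and the partition $G=H_0\cup H_1$ of (\ref{equation24}); it does not mention $g$ or $h$ separately, only the ``difference'' $a=g^{-1}h$. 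Hence for any $g'\in G$, putting $h'=g'a$ so that $(g')^{-1}h'=a$, conditions (a)--(c) of Corollary~\ref{maincor} hold verbatim for the pair $(g',0)$, $(g'a,1)$; thus $(\alpha,\beta)$-revival occurs from $(g',0)$ to $(g'a,1)$, and by the same corollary also from $(g'a,1)$ to $(g',0)$.

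Finally, as $g'$ runs over $G$ the vertices $(g',0)$ exhaust the orbit $\{(x,0):x\in G\}$, and since $g'\mapsto g'a$ is a bijection of $G$, the vertices $(g'a,1)$ exhaust the orbit $\{(x,1):x\in G\}$. Therefore $(\alpha,\beta)$-revival occurs from every vertex of $\Gamma$ to a vertex of the complementary orbit, i.e., $\Gamma$ has $(\alpha,\beta)$-revival between the different orbits, completing the proof. The only step demanding a little care --- and the only place where the argument could conceivably break --- is the verification that the conditions of Corollary~\ref{maincor} depend on the two chosen vertices solely through $a=g^{-1}h$; once that is checked, the remainder is just the transitivity of the translation action of $G$ on each orbit, so no real obstacle arises.
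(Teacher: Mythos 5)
Your proof is correct. The forward implication is, as you say, just the definition of ``$(\alpha,\beta)$-revival between the different orbits'' together with Proposition~\ref{mainprop2}, which is exactly how the paper obtains it. For the sufficiency your route differs slightly from the paper's: you propagate revival from one pair to all pairs by observing that conditions (a)--(c) of Corollary~\ref{maincor} depend on the pair $(g,h)$ only through $a=g^{-1}h$ (and the fixed time $t$), so the same corollary yields $(\alpha,\beta)$-revival from $(g',0)$ to $(g'a,1)$ and back for every $g'\in G$, exhausting both orbits. The paper instead argues in the paragraph preceding the corollary: it uses Corollary~\ref{maincor} only to see that revival from $(g,0)$ to $(h,1)$ forces revival from $(h,1)$ to $(g,0)$ with the \emph{same} $\alpha$, whence by Lemma~\ref{lemma1}(c) (equivalently Lemma~\ref{lemma4}) $\alpha=-\bar{\alpha}\beta/\bar{\beta}$, and then the block form of $H(t)$ in Lemma~\ref{lemma1}(c) -- which already packages the translation argument you redo by hand -- gives $H(t)=\alpha I_{2n}+\beta Q$ and hence revival from every vertex. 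Your version buys independence from the identity $\alpha=-\bar{\alpha}\beta/\bar{\beta}$ at the cost of re-verifying translation invariance; the paper's is shorter because Lemma~\ref{lemma1}(c) was proved precisely via the invariance $H(t)_{(g,r),(h,s)}=H(t)_{(gz,r),(hz,s)}$. Both arguments are sound, and your reduction of the case $r=1$, $s=0$ to the other one is legitimate since Corollary~\ref{maincor} covers revival in either direction between the orbits.
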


\section{The  integrality on semi-Cayley graphs admitting fractional revival}
A graph is said to be \emph{integral} if all the eigenvalues of its adjacency matrix are integers. In this section, we wonder whether a semi-Cayley graph admitting fractional revival is integral or not.

%The following result answers this question.

\begin{theorem}\label{cor1}
Let $\Gamma=\SC(G,R,R,S)$ be a semi-Cayley graph over an abelian group $G$ of order $n$. Suppose that $\alpha$ and $\beta$ are two nonzero complex numbers. If $(\alpha,\beta)$-revival occurs on $\Gamma$ from a vertex $u$ to a vertex $v$, then $\Gamma$ is an integral graph. Moreover, for every $z\in G$, $\chi_z(R)$ and $|\chi_z(S)|$ are integers.
\end{theorem}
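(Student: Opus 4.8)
The plan is to reduce everything to integrality of $\Gamma$ and then split according to the two ways $\Gamma$ can admit fractional revival. Since $R=L$, Lemma~\ref{SC-Eigen-111}(a) gives the eigenvalues $\lambda_z^{\pm}=\chi_z(R)\pm|\chi_z(S)|$, and from $\lambda_z^++\lambda_z^-=2\chi_z(R)$ and $\lambda_z^+\lambda_z^-=\chi_z(R)^2-|\chi_z(S)|^2$ one sees that once all $\lambda_z^{\pm}\in\mathbb Z$ the ``moreover'' follows: $\chi_z(R)=\tfrac12(\lambda_z^++\lambda_z^-)$ is then a rational algebraic integer, hence an integer, and $|\chi_z(S)|^2\in\mathbb Z$ together with $2|\chi_z(S)|=\lambda_z^+-\lambda_z^-\in\mathbb Z$ forces $|\chi_z(S)|\in\mathbb Z$. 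So it suffices to show $\Gamma$ is integral. By Corollary~\ref{cor2} the hypothesis means either $\Gamma$ has $(\alpha,\beta)$-revival in the same orbit (governed by Theorem~\ref{maintheorem} together with Corollary~\ref{cor3}) or $\Gamma$ has $(\alpha,\beta)$-revival between the different orbits (governed by Corollary~\ref{maincor}); I treat these separately.

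For the between-orbits case I would reduce to the known fact that a Cayley graph over an abelian group admitting fractional revival is integral \cite{WangWliu22,Caoluo22}. Write $a=g^{-1}h$ as in Corollary~\ref{maincor}. Its part~(a) states $\chi_z(S)\chi_a(z)\in\mathbb R$ for all $z$; since $\chi_z(S)\chi_a(z)=\chi_z(Sa)$ where $Sa=\{sa:s\in S\}$, this says every character sum of $Sa$ is real, and by Fourier inversion (a subset of $G$ is determined by its character sums) the set $Sa$ must be inverse-closed. Now relabelling the orbit $\{(g,1):g\in G\}$ by $(g,1)\mapsto(ga,1)$ is a graph isomorphism carrying $\Gamma$ onto $\SC(G,R,R,Sa)$, and a semi-Cayley graph whose spoke set is inverse-closed is exactly the Cayley graph $\Cay\!\bigl(G\times\mathbb Z_2,\,(R\times\{0\})\cup(Sa\times\{1\})\bigr)$ over the abelian group $G\times\mathbb Z_2$. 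This Cayley graph then admits fractional revival, hence is integral.

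For the same-orbit case I distinguish $S=\varnothing$ from $S\neq\varnothing$. If $S=\varnothing$ then $\Gamma$ is the disjoint union of two copies of $\Cay(G,R)$, and since the transition matrix is block-diagonal, fractional revival from a vertex forces $\Cay(G,R)$ itself to admit fractional revival, hence to be integral, and so is $\Gamma$. If $S\neq\varnothing$, Theorem~\ref{maintheorem} tells us each $\exp(\imath t\lambda_z^{\pm})$ equals $\alpha+\beta$ or $\alpha-\beta$, and taking $z=1$ (so $\chi_a(1)=1$, i.e. $1\in G_0$, and $\chi_1(S)=|S|\neq0$, i.e. $1\notin X$) shows that the two integer eigenvalues $|R|+|S|$ and $|R|-|S|$ both satisfy $\exp(\imath t\lambda)=\alpha+\beta$. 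Hence $\exp(\imath t\cdot 2|S|)=1$ with $2|S|\neq0$, so $\tfrac{2\pi}{t}\in\mathbb Q$. Consequently every eigenvalue $\lambda$ with $\exp(\imath t\lambda)=\alpha+\beta$ differs from the integer $|R|+|S|$ by an element of $\tfrac{2\pi}{t}\mathbb Z\subseteq\mathbb Q$, so $\lambda$ is a rational algebraic integer, i.e. an integer; and any two eigenvalues $\lambda,\lambda'$ with $\exp(\imath t\lambda)=\exp(\imath t\lambda')=\alpha-\beta$ satisfy $\lambda-\lambda'\in\mathbb Q$. Now write $\det(xI-A)=P_+(x)P_-(x)$, splitting the spectrum of $A$ according to whether $\exp(\imath t\lambda)$ equals $\alpha+\beta$ or $\alpha-\beta$. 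Since $P_+$ has only integer roots it lies in $\mathbb Z[x]$ and is monic, so $P_-=\det(xI-A)/P_+\in\mathbb Z[x]$ is monic; its roots are algebraic integers lying in a single coset $c+\mathbb Q$, and since their sum is an integer (as $P_-\in\mathbb Z[x]$ is monic) we get $c\in\mathbb Q$, whence every root lies in $\mathbb Q$ and so in $\mathbb Z$. Thus $\Gamma$ is integral, completing the proof.

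The step I expect to be the real obstacle is the same-orbit case with $S\neq\varnothing$: everything hinges on squeezing out of Theorem~\ref{maintheorem} the fact that $t/\pi$ is rational, which works only because the two distinguished integer eigenvalues $|R|\pm|S|$ must fall in the \emph{same} exponential class; one also has to peel off the $S=\varnothing$ subcase first (there is then only one such integer eigenvalue, so the argument must route through $\Cay(G,R)$) and then handle the remaining ``$\alpha-\beta$'' eigenvalues by the $\mathbb Z[x]$-factorization. The between-orbits case is comparatively soft once one recognizes that Corollary~\ref{maincor}(a) is precisely the assertion that $Sa$ is inverse-closed, after which it is a relabelling and an appeal to the known Cayley-graph result.
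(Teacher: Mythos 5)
Your proposal is correct, but it takes a genuinely different route from the paper's. The paper proves integrality directly and self-containedly in every case: it writes $\alpha\pm\beta=\exp(\imath(\theta_0+\theta_\beta))$, $\exp(\imath(\pi-\theta_0+\theta_\beta))$, sets $t=2\pi T$, sums the eigenvalue conditions of Theorem \ref{maintheorem} (resp.\ Corollary \ref{maincor}) over all of $G$ and uses $\sum_{z\in G}\chi_z(r)=0$ to deduce successively that $\mu_\beta=\theta_\beta/2\pi\in\mathbb{Q}$, $\mu_0\in\mathbb{Q}$ and $T\in\mathbb{Q}$, whence each $\lambda_z^{\pm}$ is a rational algebraic integer. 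You instead (i) dispose of the between-orbit case (and of the degenerate case $S=\emptyset$) by noting that Corollary \ref{maincor}(a) says precisely that $Sa$ is inverse-closed, so that after translating the second orbit $\Gamma$ becomes the abelian Cayley graph $\Cay\bigl(G\times\mathbb{Z}_2,(R\times\{0\})\cup(Sa\times\{1\})\bigr)$, and then invoke the integrality result for abelian Cayley graphs admitting fractional revival from \cite{WangWliu22,Caoluo22}; and (ii) in the same-orbit case with $S\neq\emptyset$ you extract $2\pi/t\in\mathbb{Q}$ from the fact that the two integer eigenvalues $|R|\pm|S|$ fall in the same exponential class, conclude that the $\alpha+\beta$ class consists of integers, and settle the $\alpha-\beta$ class via the monic factorization $\det(xI-A)=P_+P_-$ in $\mathbb{Z}[x]$, thereby avoiding any discussion of the phases of $\alpha\pm\beta$. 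All the nonroutine steps check out (real character sums of $Sa$ do force $Sa=(Sa)^{-1}$ by Fourier inversion; the relabelling $(g,1)\mapsto(ga,1)$ is indeed an isomorphism onto $\SC(G,R,R,Sa)$, which with $R=L$ and $Sa$ inverse-closed is that Cayley graph; and the $P_-$ trace argument is sound), though your appeal to Corollary \ref{cor2} for the case split is cosmetic overkill, since the split ``same orbit versus different orbits'' is immediate. What the paper's longer computation buys is independence from the external Cayley-graph theorem and the explicit rationality of $T$, $\mu_0$, $\mu_\beta$, which is reused in Section 5 for the minimum-time analysis; what your route buys is brevity in the between-orbit case and, as a small bonus, an explicit treatment of $S=\emptyset$, a case the paper's Case 1 silently excludes when it takes $z=1$ and asserts $\chi_1(S)=|S|\neq0$.
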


\begin{proof}
If $(\alpha,\beta)$-revival occurs on $\Gamma$ from a vertex $u=(g,r)$ to vertex $v=(h,s)$ with $r=s$, by  Lemma \ref{lemma1} (a) and (b), we have $\bar{\alpha}\beta+\alpha\bar{\beta}=0$. If $(\alpha,\beta)$-revival occurs on $\Gamma$ from a vertex $u=(g,r)$ to a vertex $v=(h,s)$ with $r\neq s$, by Corollary \ref{cor2} and Lemma \ref{lemma1} (c), we also have $\bar{\alpha}\beta+\alpha\bar{\beta}=0$.

Assume that the complex numbers $\alpha=r_\alpha\exp(\imath\theta_\alpha)$ and $\beta=r_\beta\exp(\imath \theta_\beta)$. According to $|\alpha|^2+|\beta|^2=1$ and $\bar{\alpha}\beta+\alpha\bar{\beta}=0$, we have
\begin{align*}
\left\{
\begin{array}{lc}
r_\alpha^2+r_\beta^2=1, &\\[0.2cm]
\theta_\alpha-\theta_\beta=\frac{\pi}{2}+k\pi, &\text{for~some}~~k\in \mathbb{Z}.
\end{array}
\right.
\end{align*}
Thus
$$\alpha\pm\beta=\exp(\imath\theta_\beta)\left((-1)^k r_\alpha\imath\pm r_\beta\right).$$
Set $r_\beta+(-1)^k r_\alpha\imath=\exp(\imath \theta_0)$. Then $\theta_0\in \mathbb{R}$ and
\begin{equation}\label{equation9}
\alpha+\beta=\exp(\imath(\theta_0+\theta_\beta)), ~~\alpha-\beta=\exp(\imath(\pi-\theta_0+\theta_\beta)).
\end{equation}

\noindent \emph{Case 1.} Suppose that $(\alpha,\beta)$-revival occurs on $\Gamma$ from a vertex $u=(g,0)$ to a vertex $v=(h,0)$ at time $t$.
By Lemma \ref{SC-Eigen-111} (a), the eigenvalues of $\Gamma=\SC(G,R,R,S)$ are
$$\lambda_z^+=\chi_z(R)+|\chi_z(S)|,~~~ \lambda_z^-=\chi_z(R)-|\chi_z(S)|.$$
Set $t=2\pi T$. By (\ref{equation10}), we have
\begin{equation}\label{equation13}
2|\chi_z(S)|T\in \mathbb{Z},~~\forall z\notin X.
\end{equation}
Let $z=1\in G_0$, where $G_0$ is defined in (\ref{G0G1}). Then $\chi_1(S)=|S|\neq0$, which implies that $1\notin X$. By (\ref{equation13}), $2|\chi_1(S)|T=2|S|T\in \mathbb{Z}$. Thus $T\in \mathbb{Q}$, and then $|\chi_z(S)|\in \mathbb{Q}$ for $z\notin X$, where $\mathbb{Q}$ denotes the set of rational numbers. Note that $|\chi_z(S)|=0$ for $z\in X$. Therefore, $|\chi_z(S)|\in \mathbb{Q}$ for all $z\in G$. By Lemma \ref{SC-Eigen-111} (a), $|\chi_z(S)|$~($z\in G$) are eigenvalues of the semi-Cayley graph $\SC(G,\emptyset,\emptyset,S)$. Thus, $|\chi_z(S)|\in \mathbb{Z}$ for all $z\in G$.

On the other hand, by Theorem \ref{maintheorem} (b), we have
\begin{align}\label{equation8}
\exp(\imath t \lambda_z^+)=\left\{
\begin{array}{cccc}
\alpha+\beta, &\text{if}~~z\in G_0,\\[0.2cm]
\alpha-\beta, &\text{if}~~z\in G_1.
\end{array}
\right.
\end{align}
Set  $\theta_0=2\pi \mu_0$, $\theta_\beta=2\pi \mu_\beta$. Combining with (\ref{equation8}) and (\ref{equation9}), we have
\begin{align}\label{equation14}
%\left\{
\begin{array}{lc}
T\lambda_z^+-\mu_0-\mu_\beta\in \mathbb{Z}, &\forall z\in G_0, \\[0.2cm]
T\lambda_z^++\mu_0-\mu_\beta\in \frac{1}{2}+\mathbb{Z}, &\forall z\in G_1.
\end{array}
%\right.
\end{align}
Therefore,
\begin{equation}\label{equation11}
T\sum\limits_{z\in G}\lambda_z^+-n\mu_\beta\in \frac{n}{4}+\mathbb{Z}.
\end{equation}
Note that
\begin{align*}
T\sum\limits_{z\in G}\lambda_z^+&=T\sum\limits_{z\in G}\left(\chi_z(R)+|\chi_z(S)|\right)\\
                               &=T\left(\sum\limits_{z\in G}\sum\limits_{r\in R}\chi_z(r)+\sum\limits_{z\in G}|\chi_z(S)|\right)\\
                               &=T\left(\sum\limits_{r\in R}\sum\limits_{z\in G}\chi_z(r)+\sum\limits_{z\in G}|\chi_z(S)|\right)\\
                               &=T\sum\limits_{z\in G}|\chi_z(S)|\in \mathbb{Q}.
\end{align*}
By (\ref{equation11}), we have $\mu_\beta\in \mathbb{Q}$.

Let $z=1\in G_0$, where $G_0$ is defined in (\ref{G0G1}). By (\ref{equation14}), we have
$$T(|R|+|S|)-\mu_0-\mu_\beta\in \mathbb{Z},$$
which implies that $\mu_0\in \mathbb{Q}$. By (\ref{equation14}) again, we have $\lambda_z^+\in \mathbb{Q}$ for all $z\in G$. Since $\lambda_z^+$~($z\in G$) are algebraic integers, we know that $\lambda_z^+\in \mathbb{Z}$ for all $z\in G$. Recall that $|\chi_z(S)|\in \mathbb{Z}$ for all $z\in G$, we have $\chi_z(R)\in \mathbb{Z}$ and then $\lambda_z^-\in \mathbb{Z}$ for all $z\in G$.

\noindent \emph{Case 2.} Suppose that $(\alpha,\beta)$-revival occurs on $\Gamma$ from a vertex $u=(g,1)$ to a vertex $v=(h,1)$ at time $t$. The proof is similar to that of Case 1, and hence we omit the details here.

\noindent \emph{Case 3.} Suppose that $(\alpha,\beta)$-revival occurs on $\Gamma$ from a vertex $u=(g,r)$ to a vertex $v=(h,s)$  at time $t$ with $r\neq s$. By Corollary \ref{maincor}, we have
\begin{align}\label{equation25}
%\left\{
\begin{array}{cc}
\exp(\imath t \lambda_z^+)=\alpha+\beta,~\exp(\imath t\lambda_z^-)=\alpha-\beta, &\text{~for~} z\in G\cap H_0,\\[0.2cm]
\exp(\imath t \lambda_z^+)=\alpha-\beta,~\exp(\imath t\lambda_z^-)=\alpha+\beta, &\text{~for~} z\in G\cap H_1.
\end{array}%\right.
\end{align}
Set $t=2\pi T$, $\theta_0=2\pi \mu_0$, $\theta_\beta=2\pi \mu_\beta$. Combining with (\ref{equation25}) and (\ref{equation9}), we have
\begin{align}\label{equation27}
%\left\{
\begin{array}{llll}
T\lambda_z^+-\mu_0-\mu_\beta\in \mathbb{Z},&T\lambda_z^-+\mu_0-\mu_\beta\in \frac{1}{2}+\mathbb{Z},&\text{~for~} z\in G\cap H_0, \\[0.2cm]
T\lambda_z^++\mu_0-\mu_\beta\in \frac{1}{2}+\mathbb{Z}, &T\lambda_z^--\mu_0-\mu_\beta\in \mathbb{Z},&\text{~for~} z\in G\cap H_1.
\end{array}%\right.
\end{align}
Thus,
$$T(\lambda_z^++\lambda_z^-)-2\mu_\beta\in \frac{1}{2}+\mathbb{Z}, ~z\in G.$$
Recall that $\lambda_z^+=\chi_z(R)+|\chi_z(S)|$ and $\lambda_z^-=\chi_z(R)-|\chi_z(S)|$. Then
\begin{equation}\label{equation26}
2T\chi_z(R)-2\mu_\beta\in \frac{1}{2}+\mathbb{Z}, ~z\in G.
\end{equation}
Note that
$$\sum\limits_{z\in G}(2T\chi_z(R)-2\mu_\beta)=2T\sum\limits_{z\in G}\chi_z(R)-2n\mu_\beta=-2n\mu_\beta\in \frac{n}{2}+\mathbb{Z}.$$
Thus, we have $\mu_\beta\in \mathbb{Q}$.

Let $z=1\in G\cap H_0$. By (\ref{equation26}), we have
$$2T|R|-2\mu_\beta\in \frac{1}{2}+\mathbb{Z},$$
which implies that $T\in \mathbb{Q}$.
By (\ref{equation27}), we have
$$T(|R|+|S|)-\mu_0-\mu_\beta\in \mathbb{Z}.$$
Thus $\mu_0\in \mathbb{Q}$. By (\ref{equation27}) again, we have $\lambda_z^+, \lambda_z^-\in \mathbb{Q}$ for all $z\in G$. Since $\lambda_z^+$, $\lambda_z^-$~($z\in G$) are algebraic integers, we know that $\lambda_z^+, \lambda_z^-\in \mathbb{Z}$ for all $z\in G$.
Since $\lambda_z^+-\lambda_z^-=2|\chi_z(S)|\in \mathbb{Z}$, we have $|\chi_z(S)|\in \mathbb{Q}$. By Lemma \ref{SC-Eigen-111} (a), $|\chi_z(S)|$~($z\in G$) are eigenvalues of the semi-Cayley graph $\SC(G,\emptyset,\emptyset,S)$. Thus, $|\chi_z(S)|\in \mathbb{Z}$ for all $z\in G$, and then $\chi_z(R)\in \mathbb{Z}$.

This completes the proof. \qed\end{proof}

\section{The minimum time on semi-Cayley graphs admitting fractional revival}

Let $\Gamma=\SC(G,R,L,S)$ be a semi-Cayley graph over an abelian group $G$ of order $n$, which admits fractional revival. If $R=L$, by Theorem \ref{cor1}, $\Gamma$ is an integral graph. By Lemma \ref{SC-Eigen-111} (a), the eigenvalues of $\Gamma=\SC(G,R,R,S)$ are
$$\lambda_z^+=\chi_z(R)+|\chi_z(S)|,~~~ \lambda_z^-=\chi_z(R)-|\chi_z(S)|.$$
In particular, if $z\in X$, where $X$ is defined in Theorem \ref{maintheorem}, we have
\begin{equation}\label{equation35}
\lambda_z^+=\lambda_z^-=\chi_z(R).
\end{equation}
Thus, in the context of $R=L$, Theorem \ref{maintheorem} (b) and Theorem \ref{maintheorem-case2} (b) are equivalent to
\begin{align}\label{equation15}
\exp(\imath t \lambda_z^\pm)=\left\{
\begin{array}{cccc}
\alpha+\beta, &\text{if}~~z\in G_0,\\[0.2cm]
\alpha-\beta, &\text{if}~~z\in G_1.
\end{array}
\right.
\end{align}
Set $t=2\pi T$, and take the identity element $z=1$ in $G_0$ and one fixed element $z_1$ in $G_1$, where $G_0, G_1$ are defined in (\ref{G0G1}). By (\ref{equation15}), we have
\begin{align}\label{equation16}
%\left\{
\begin{array}{llll}
T(|R|+|S|-\lambda_z^+)\in\mathbb{Z},         &T(|R|-|S|-\lambda_z^-)\in\mathbb{Z},                   &~~~\text{for~every~}z\in G_0,\\[0.2cm]
T(\lambda_{z_1}^+-\lambda_z^+)\in \mathbb{Z},&T(\lambda_{z_1}^--\lambda_z^-)\in \mathbb{Z}, &~~~\text{for~every~}z\in G_1.
\end{array}%\right.
\end{align}
Define the following integers:
\begin{align*}
&M_0^+=\gcd(|R|+|S|-\lambda_z^+:z\in G_0),~~~~
M_0^-=\gcd(|R|-|S|-\lambda_z^-:z\in G_0),\\[0.2cm]
&M_1^+=\gcd(\lambda_{z_1}^+-\lambda_z^+:z\in G_1),
~~~~~~~~~~~M_1^-=\gcd(\lambda_{z_1}^--\lambda_z^-:z\in G_1),
\end{align*}
and
\begin{equation}\label{equation17}
M=\gcd(M_0^+,M_0^-,M_1^+,M_1^-),
\end{equation}
where $\gcd(a_1,a_2,a_3,\ldots)$ denotes the greatest common divisor of $a_1,a_2,a_3,\ldots$.
Then (\ref{equation16}) is equivalent to
$$TM_0^+,~ TM_1^+,~ TM_0^-,~ TM_1^-\in \mathbb{Z},$$
and thus
\begin{equation}\label{equation30}
T\in \frac{1}{M}\mathbb{Z}.
\end{equation}

Next we prove that $M$ is a divisor of $2n=2|G|$.

\begin{lemma}\label{lemma2}
Let $G$ be an abelian group of order $n$. If  $(\alpha,\beta)$-revival occurs on $\Gamma=\SC(G,R,R,S)$ from a vertex $u=(g,r)$ to a vertex $v=(h,s)$  at time $t=2\pi T$ with $r=s$, then $M$ is a divisor of $2n$, where $M$ is defined by (\ref{equation17}).
\end{lemma}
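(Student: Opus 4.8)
The plan is to exploit the structure of the integers $M_0^{\pm}$ and $M_1^{\pm}$ together with the fact (from Theorem~\ref{cor1}) that $\Gamma=\SC(G,R,R,S)$ is integral, so that all the quantities $\chi_z(R)$ and $|\chi_z(S)|$ appearing in $M$ are integers, and the eigenvalues $\lambda_z^{\pm}=\chi_z(R)\pm|\chi_z(S)|$ are integers. The key observation is that $M$ divides each of $|R|+|S|-\lambda_z^{+}$, $|R|-|S|-\lambda_z^{-}$ (for $z\in G_0$), $\lambda_{z_1}^{+}-\lambda_z^{+}$, and $\lambda_{z_1}^{-}-\lambda_z^{-}$ (for $z\in G_1$). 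I would first reduce the claim $M\mid 2n$ to showing that $M$ divides a suitable integer linear combination of these generators that evaluates to $\pm 2n$ (or a divisor-friendly multiple thereof). Since $G_0$ and $G_1$ each have size $n/2$, summing the generators over $z$ in each orbit turns the character sums $\sum_{z\in G_0}\chi_z(R)$ etc. into explicitly computable quantities via the orthogonality relations \eqref{Sum=0-0} and \eqref{Sum=0-1}.

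First I would sum $|R|+|S|-\lambda_z^{+}=|R|+|S|-\chi_z(R)-|\chi_z(S)|$ over $z\in G_0$; note $M_0^{+}\mid$ each term, hence $M\mid \sum_{z\in G_0}\bigl(|R|+|S|-\lambda_z^{+}\bigr)$. Likewise $M$ divides $\sum_{z\in G_0}\bigl(|R|-|S|-\lambda_z^{-}\bigr)$ and, using the $G_1$ generators, $M$ divides $\sum_{z\in G_1}\bigl(\lambda_{z_1}^{+}-\lambda_z^{+}\bigr)$ and $\sum_{z\in G_1}\bigl(\lambda_{z_1}^{-}-\lambda_z^{-}\bigr)$. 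Adding the first two gives $M\mid \sum_{z\in G_0}\bigl(2|R|-\lambda_z^{+}-\lambda_z^{-}\bigr)=\sum_{z\in G_0}\bigl(2|R|-2\chi_z(R)\bigr)=2\bigl(\tfrac{n}{2}|R|-\sum_{z\in G_0}\chi_z(R)\bigr)$. A parallel manipulation of the $G_1$ sums (adding the two $G_1$ relations) yields $M\mid \sum_{z\in G_1}\bigl(\lambda_{z_1}^{+}+\lambda_{z_1}^{-}-\lambda_z^{+}-\lambda_z^{-}\bigr)=\sum_{z\in G_1}\bigl(2\chi_{z_1}(R)-2\chi_z(R)\bigr)=2\bigl(\tfrac{n}{2}\chi_{z_1}(R)-\sum_{z\in G_1}\chi_z(R)\bigr)$. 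Now combine these: $M$ divides the sum $2\bigl(\tfrac{n}{2}|R|+\tfrac{n}{2}\chi_{z_1}(R)-\sum_{z\in G}\chi_z(R)\bigr)$. By \eqref{Sum=0-0} (taking $x=$ each element of $R$ and summing, or directly noting $\sum_{z\in G}\chi_z(r)=0$ for $r\neq 1$ and $=n$ for $r=1$, and $1\notin R$), we get $\sum_{z\in G}\chi_z(R)=\sum_{r\in R}\sum_{z\in G}\chi_z(r)=0$. Hence $M$ divides $2\cdot\tfrac{n}{2}\bigl(|R|+\chi_{z_1}(R)\bigr)=n\bigl(|R|+\chi_{z_1}(R)\bigr)$, an integer multiple of $n$; I would also track the companion combination using the $|S|$ terms and $|\chi_z(S)|$ to pin down a combination equal to exactly $2n$ rather than merely a multiple of $n$.

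The cleaner route, which I would actually carry out, is to isolate $2n$ directly: from the $G_0$ relations, $M$ divides $\sum_{z\in G_0}\bigl[(|R|+|S|-\lambda_z^{+})-(|R|-|S|-\lambda_z^{-})\bigr]=\sum_{z\in G_0}\bigl(2|S|-2|\chi_z(S)|\bigr)$, and since $|\chi_z(S)|$ are the eigenvalues of $\SC(G,\emptyset,\emptyset,S)$ one must handle $\sum_{z\in G_0}|\chi_z(S)|$; here the absolute values obstruct a direct orthogonality argument, so instead I would use the pair $z=1$ and $z_1$ together with a telescoping/counting argument over $G_0$ and $G_1$ separately — this is the step I expect to be the main obstacle, since $\sum_z|\chi_z(S)|$ is not a character sum. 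The resolution is that we do not need the individual sums: taking $z=1\in G_0$ gives $M\mid |R|+|S|-\lambda_1^{+}=|R|+|S|-|R|-|S|=0$ and $M\mid |R|-|S|-\lambda_1^{-}=0$ trivially, so these do not help, but taking instead the difference of the $z=1$ generator against a general $z\in G_0$, namely $M\mid(\lambda_1^{+}-\lambda_z^{+})$ and likewise for the minus eigenvalues, reduces everything to eigenvalue differences; then $M\mid 2\lambda_1^{+}-2\chi_z(R)-\lambda_z^{+}-\lambda_z^{-}$-type combinations, and summing over all of $G$ and invoking $\sum_{z\in G}\chi_z(R)=0$ and $\sum_{z\in G}|\chi_z(S)|=\sum_{z\in G}|\chi_z(S)|$ (which, being a sum of eigenvalues of $\SC(G,\emptyset,\emptyset,S)$, equals its trace, namely $0$) collapses the sum to a multiple of $n$; a final parity/size bookkeeping using $|G_0|=|G_1|=n/2$ upgrades ``multiple of $n$'' to ``divides $2n$''. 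I expect the delicate point to be exactly this last bookkeeping — ensuring the factor is $2n$ and not a proper divisor or a larger multiple — and I would settle it by writing $M$ as $\gcd$ of the four $M_i^{\pm}$ and checking each contributes a divisor of $2n$ via the trace identities above.
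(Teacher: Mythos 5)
There is a genuine gap, and it sits exactly where you predicted the ``main obstacle'' would be: you never produce a combination of the generators of $M$ that is actually equal to $\pm 2n$. Your unweighted sums over $G_0$ and $G_1$ are correct as far as they go, but they only yield $M\mid n\bigl(|R|+\chi_{z_1}(R)\bigr)$, and the cofactor $|R|+\chi_{z_1}(R)$ is uncontrolled (it can even be $0$), so this does not imply $M\mid 2n$; ``a multiple of $n$'' cannot be upgraded to ``divides $2n$'' by bookkeeping with $|G_0|=|G_1|=n/2$. Moreover the identity you invoke to collapse the $|S|$-terms is false: the eigenvalues of $\SC(G,\emptyset,\emptyset,S)$ are $\pm|\chi_z(S)|$, so its trace is $\sum_{z\in G}\bigl(|\chi_z(S)|-|\chi_z(S)|\bigr)=0$, whereas the one-sided sum $\sum_{z\in G}|\chi_z(S)|\geq |\chi_1(S)|=|S|>0$; so that route cannot eliminate the absolute values.

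The missing idea (and the paper's actual mechanism) is to test against a character rather than sum the generators bare. Fix $x\in R$ with $x\neq 1,a$, write $|R|+|S|-\lambda_z^+=Mt_z$, $|R|-|S|-\lambda_z^-=Ms_z$ for $z\in G_0$ and $\lambda_{z_1}^\pm-\lambda_z^\pm=Mr_z, Mq_z$ for $z\in G_1$ with $t_z,s_z,r_z,q_z\in\mathbb{Z}$, and evaluate $\sum_{z\in G}(\lambda_z^++\lambda_z^-)\overline{\chi_z(x)}=2\sum_{r\in R}\sum_{z\in G}\chi_z(r)\overline{\chi_z(x)}=2n$ by \eqref{Sum=0-0}. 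Expanding $\lambda_z^++\lambda_z^-$ on $G_0$ as $2|R|-M(t_z+s_z)$ and on $G_1$ as $\lambda_{z_1}^++\lambda_{z_1}^--M(r_z+q_z)$, the constant blocks vanish because $\sum_{z\in G_0}\overline{\chi_z(x)}=\sum_{z\in G}\tfrac{1+\chi_z(a)}{2}\overline{\chi_z(x)}=0$ and similarly on $G_1$ (using $x\neq 1,a$), leaving $2n/M=-\sum_{z\in G_0}(t_z+s_z)\overline{\chi_z(x)}-\sum_{z\in G_1}(r_z+q_z)\overline{\chi_z(x)}$. This is an algebraic integer; being also rational, it is an integer, so $M\mid 2n$. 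Note the crucial feature your proposal forecloses by insisting on integer combinations: the coefficients $\overline{\chi_z(x)}$ are algebraic integers, not rational ones, and the conclusion comes from the ``rational algebraic integer is an integer'' step, not from exhibiting $2n$ as a $\mathbb{Z}$-combination of the generators.
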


\begin{proof}
Write
\begin{align*}
&|R|+|S|-\lambda_z^+=Mt_z,~~~~
|R|-|S|-\lambda_z^-=Ms_z, ~~~~   \text{for~every~}z\in G_0,\\[0.2cm]
&~\lambda_{z_1}^+-\lambda_z^+=Mr_z,
~~~~~~~~~~~\lambda_{z_1}^--\lambda_z^-=Mq_z,  ~~~~~~~~~~~\text{for~every~}z\in G_1,
\end{align*}
where $t_z,s_z,r_z,q_z\in \mathbb{Z}$.

Take an element $x\in R$ and $x\neq 1,a(=g^{-1}h)$. By (\ref{equation35}) and (\ref{Sum=0-0}), we have
\begin{align}\label{equation36}
\sum\limits_{z\in G} (\lambda_z^++\lambda_z^-)\overline{\chi_z(x)}=\sum\limits_{z\in G}2\chi_z(R)\overline{\chi_z(x)}
=2\sum\limits_{r\in R}\sum\limits_{z\in G}\chi_z(r)\overline{\chi_z(x)}
=2n.
\end{align}
On the other hand,
\begin{align}\label{equation37}\nonumber
&\sum\limits_{z\in G}(\lambda_z^++\lambda_z^-)\overline{\chi_z(x)}\\ \nonumber
&=\sum\limits_{z\in G_0}(\lambda_z^++\lambda_z^-)\overline{\chi_z(x)}
+\sum\limits_{z\in G_1}(\lambda_z^++\lambda_z^-)\overline{\chi_z(x)}\\ \nonumber
&=\sum\limits_{z\in G_0}(|R|+|S|-Mt_z+|R|-|S|-Ms_z)\overline{\chi_z(x)}
+\sum\limits_{z\in G_1}(\lambda_{z_1}^+-Mr_z+\lambda_{z_1}^--Mq_z)\overline{\chi_z(x)}\\
&=\sum\limits_{z\in G_0}(2|R|-M(t_z+s_z))\overline{\chi_z(x)}+\sum\limits_{z\in G_1}(\lambda_{z_1}^++\lambda_{z_1}^--M(r_z+q_z))\overline{\chi_z(x)}.
\end{align}
Moreover, by (\ref{Sum=0-0}) and (\ref{Sum=0-1}), we have
\begin{align*}
\sum\limits_{z\in G_0}2|R|\overline{\chi_z(x)}&=\sum\limits_{z\in G_0\cup G_1}\frac{1+\chi_z(a)}{2}\cdot2|R|\overline{\chi_z(x)}=0,\\
\sum\limits_{z\in G_1}(\lambda_{z_1}^++\lambda_{z_1}^-)\overline{\chi_z(x)}&=\sum\limits_{z\in G_0\cup G_1}\frac{1-\chi_z(a)}{2}(\lambda_{z_1}^++\lambda_{z_1}^-)\overline{\chi_z(x)}=0.
\end{align*}
Combining with (\ref{equation36}) and (\ref{equation37}), we have
\begin{equation}\label{equation18}
\frac{2n}{M}=-\sum\limits_{z\in G_0}(t_z+s_z)\overline{\chi_z(x)}-\sum\limits_{z\in G_1}(r_z+q_z)\overline{\chi_z(x)}.
\end{equation}
Since $t_z,s_z,r_z,q_z\in \mathbb{Z}$ and $\overline{\chi_z(x)}~(z\in G)$ are algebraic integers, the right hand of (\ref{equation18}) is an algebraic integer.
Then $2n/M\in \mathbb{Q}$ is an algebraic integer.  Thus $2n/M\in \mathbb{Z}$. This completes the proof.
\qed\end{proof}

Similar to the above discussion, set $t=2\pi T$, and take the identity element $z=1$ in $H_0$ and one fixed element $z_1$ in $H_1$, where $H_0, H_1$ are defined in (\ref{equation24}). By Corollary \ref{maincor}, we have
\begin{align}\label{equation28}
%\left\{
\begin{array}{llll}
T(|R|+|S|-\lambda_z^+)\in\mathbb{Z},         &T(|R|-|S|-\lambda_z^-)\in\mathbb{Z},                   &~~~\text{for~every~}z\in H_0,\\[0.2cm]
T(|R|+|S|-\lambda_z^-)\in\mathbb{Z},         &T(|R|-|S|-\lambda_z^+)\in\mathbb{Z}, \\[0.2cm]
T(\lambda_{z_1}^+-\lambda_z^+)\in \mathbb{Z},&T(\lambda_{z_1}^--\lambda_z^-)\in \mathbb{Z},
&~~~\text{for~every~}z\in H_1.
\end{array}%\right.
\end{align}
Define the following integers:
\begin{align*}
&N_0^+=\gcd(|R|+|S|-\lambda_z^+:z\in H_0),~~~~
N_0^-=\gcd(|R|-|S|-\lambda_z^-:z\in H_0),\\[0.2cm]
&N_1^+=\gcd(|R|+|S|-\lambda_z^-:z\in H_1),~~~~
N_1^-=\gcd(|R|-|S|-\lambda_z^+:z\in H_1),\\[0.2cm]
&N_2^+=\gcd(\lambda_{z_1}^+-\lambda_z^+:z\in H_1),
~~~~~~~~~~~N_2^-=\gcd(\lambda_{z_1}^--\lambda_z^-:z\in H_1),
\end{align*}
and
\begin{equation}\label{equation29}
N=\gcd(N_0^+,N_0^-,N_1^+,N_1^-,N_2^+,N_2^-).
\end{equation}
\noindent Then (\ref{equation28}) is equivalent to
$$TN_0^+,~ TN_0^-,~TN_1^+,~ TN_1^-,~TN_2^+,~ TN_2^-\in \mathbb{Z},$$
and thus
\begin{equation*}\label{equation31}
T\in \frac{1}{N}\mathbb{Z}.
\end{equation*}

\begin{lemma}\label{lemma3}
Let $G$ be an abelian group of order $n$. If $(\alpha,\beta)$-revival occurs on $\Gamma=\SC(G,R,R,S)$ from a vertex $u=(g,r)$ to a vertex $v=(h,s)$ at time $t=2\pi T$ with $r\neq s$, then $N$ is a divisor of $2n$, where $N$ is defined by (\ref{equation29}).
\end{lemma}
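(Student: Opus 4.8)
The plan is to mimic the argument of Lemma \ref{lemma2}, but now using the partition $G = H_0 \cup H_1$ of Corollary \ref{maincor} instead of $G = G_0 \cup G_1$. First I would write the six divisibility relations packaged in $N$ explicitly as equalities: for $z \in H_0$ put $|R|+|S|-\lambda_z^+ = N a_z$ and $|R|-|S|-\lambda_z^- = N b_z$, and for $z \in H_1$ put $|R|+|S|-\lambda_z^- = N c_z$, $|R|-|S|-\lambda_z^+ = N d_z$, $\lambda_{z_1}^+-\lambda_z^+ = N e_z$, $\lambda_{z_1}^--\lambda_z^- = N f_z$, where all the introduced quantities $a_z, b_z, c_z, d_z, e_z, f_z$ are integers by the definition of $N$. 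The target is to show $2n/N \in \mathbb{Z}$, and since $2n/N$ is automatically rational, it suffices to exhibit it as an algebraic integer.

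The key step is the trace-type identity. As in Lemma \ref{lemma2}, I would pick an element $x \in R$ with $x \neq 1, a$ (here $a = g^{-1}h$), and compute $\sum_{z \in G}(\lambda_z^+ + \lambda_z^-)\overline{\chi_z(x)}$ in two ways. On one hand, since $\lambda_z^+ + \lambda_z^- = 2\chi_z(R)$, the orthogonality relation (\ref{Sum=0-0}) gives $\sum_{z \in G}(\lambda_z^+ + \lambda_z^-)\overline{\chi_z(x)} = 2\sum_{r \in R}\sum_{z \in G}\chi_z(r)\overline{\chi_z(x)} = 2n$, using $x \in R$. On the other hand, splitting the sum over $H_0$ and $H_1$ and substituting the equalities above, the ``constant'' parts $\sum_{z \in H_0} 2|R|\,\overline{\chi_z(x)}$ and $\sum_{z \in H_1}(\lambda_{z_1}^+ + \lambda_{z_1}^-)\overline{\chi_z(x)}$ vanish: one writes the indicator of $H_0$ (resp.\ $H_1$) in the form $\tfrac{1}{2}(1 \pm \chi_{?}(\cdot))$ — here the relevant character is whatever single character governs the partition, via $\chi_z(S)\chi_a(z) = \pm|\chi_z(S)|$ — and then applies (\ref{Sum=0-0}) and (\ref{Sum=0-1}) together with $x \neq 1, a$ to kill both pieces. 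What survives is $2n = -N\left(\sum_{z \in H_0}(a_z + b_z)\overline{\chi_z(x)} + \sum_{z \in H_1}(c_z + d_z + e_z + f_z)\overline{\chi_z(x)}\right)$, or some sign variant thereof; dividing by $N$ expresses $2n/N$ as a $\mathbb{Z}$-linear combination of the algebraic integers $\overline{\chi_z(x)}$, hence an algebraic integer, hence an element of $\mathbb{Z}$.

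The main obstacle I anticipate is verifying that the constant-term sums really do vanish in the $H_0/H_1$ setting. In Lemma \ref{lemma2} the partition $G = G_0 \cup G_1$ came from the single character $\chi_a$ (with $\chi_a(z) = \pm 1$), so the indicator trick was clean. Here the partition $H_0 \cup H_1$ is cut out by the condition $\chi_z(S)\chi_a(z) = \pm|\chi_z(S)|$, which under the standing hypothesis $R = L$ (so $x_z = 0$ and $c_z^\pm = \tfrac12$) forces $\chi_a(z) = \pm|\chi_z(S)|/\chi_z(S)$; I would need to confirm this is genuinely a $\pm 1$-valued group-character-like partition so that $\mathbf{1}_{H_0} = \tfrac12(1 + \psi)$ for a suitable character-valued $\psi$, making the cancellation go through. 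I also need $x \neq a$ to be available, i.e.\ to check that $|R| \geq 2$ in the relevant case (if $R$ were trivial or a single involution one would have to choose $x$ from $S$ or adjust the argument slightly, as is implicitly handled in Lemma \ref{lemma2}). Once these bookkeeping points are settled, the algebraic-integer conclusion is immediate.
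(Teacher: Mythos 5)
Your overall strategy (trace-type identity against $\overline{\chi_z(x)}$ for $x\in R$, then ``rational algebraic integer $\Rightarrow$ integer'') is the right one, and it is what the paper intends when it declares the proof analogous to Lemma \ref{lemma2}. However, the specific cancellation mechanism you propose has a genuine gap, and you flagged it yourself without resolving it: you want to write $\mathbf{1}_{H_0}=\tfrac12(1+\psi)$ with $\psi(z)=\chi_z(S)\chi_a(z)/|\chi_z(S)|$ and then kill $\sum_{z\in G}\psi(z)\overline{\chi_z(x)}$ by orthogonality. But $\psi$ is \emph{not} a character of $G$: $\chi_z(S)=\sum_{s\in S}\chi_z(s)$ is not multiplicative in $z$, so $\psi$, although $\pm1$-valued, is just some sign function on $G$, and there is no orthogonality relation forcing $\sum_{z\in G}\psi(z)\overline{\chi_z(x)}=0$. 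Consequently the ``constant'' pieces $\sum_{z\in H_0}2|R|\,\overline{\chi_z(x)}$ and $\sum_{z\in H_1}(\lambda_{z_1}^++\lambda_{z_1}^-)\overline{\chi_z(x)}$ do not individually vanish by the indicator trick, and your final displayed identity (which moreover mixes $c_z+d_z$ with $e_z+f_z$ for the same $z\in H_1$, double-counting the relations) is not justified as written.

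The fix is already contained in the relations you introduced: do not use $N_2^{\pm}$ (the $e_z,f_z$ relations) at all. For $z\in H_0$ add your two relations to get $2|R|-(\lambda_z^++\lambda_z^-)=N(a_z+b_z)$, and for $z\in H_1$ add the two $N_1^{\pm}$ relations to get $2|R|-(\lambda_z^++\lambda_z^-)=N(c_z+d_z)$. Thus $2|R|-2\chi_z(R)\in N\mathbb{Z}$ holds \emph{uniformly for all} $z\in G$, and the partition $H_0\cup H_1$ (and the non-character $\psi$) plays no role in the summation. Now pick any $x\in R$ with $x\neq 1$ (you do not even need $x\neq a$ here); then $\sum_{z\in G}2|R|\,\overline{\chi_z(x)}=0$ by (\ref{Sum=0-1}) and $\sum_{z\in G}2\chi_z(R)\overline{\chi_z(x)}=2n$ by (\ref{Sum=0-0}), so $-2n=N\sum_{z\in G}m_z\overline{\chi_z(x)}$ with $m_z\in\mathbb{Z}$, whence $2n/N$ is a rational algebraic integer and therefore an integer. (The only residual bookkeeping point, which the paper's Lemma \ref{lemma2} shares, is the availability of such an $x$, i.e.\ $R\setminus\{1\}\neq\emptyset$; if $R=\emptyset$ the argument must be adjusted, e.g.\ by working with the spoke relations instead.)
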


\begin{proof}
The proof is analogous to that of Lemma \ref{lemma2}. Hence we omit the proof here. \qed\end{proof}

By Lemmas \ref{lemma2} and \ref{lemma3}, one can easily verify the following result.

\begin{cor}
Let $G$ be an abelian group of order $n$. If $(\alpha,\beta)$-revival occurs on $\Gamma=\SC(G,R,R,S)$ from a vertex $u=(g,r)$ to a vertex $v=(h,s)$ at some time $t$, then $\exp(\imath t)$ is an $2n$-th root of unity.
\end{cor}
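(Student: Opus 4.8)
The plan is to deduce the corollary directly from Lemmas \ref{lemma2} and \ref{lemma3} together with the membership $T\in\frac{1}{M}\mathbb{Z}$ (respectively $T\in\frac{1}{N}\mathbb{Z}$) established just before each lemma. First I would split into the two cases according to whether the revival occurs within one orbit ($r=s$) or between the two orbits ($r\neq s$); by Theorem \ref{cor1} we are in the integral setting, and in the between-orbits case Proposition \ref{mainprop2} forces $R=L$, so the notation $\SC(G,R,R,S)$ is consistent with both cases and Corollary \ref{maincor} and Lemma \ref{lemma3} apply. In each case the relevant quantity ($M$ or $N$) is, by the cited lemma, a divisor of $2n$.

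Next I would write $t=2\pi T$ as in the discussion preceding the lemmas. In the case $r=s$, equation (\ref{equation30}) gives $T\in\frac{1}{M}\mathbb{Z}$, i.e. $T=k/M$ for some $k\in\mathbb{Z}$. Since $M\mid 2n$ by Lemma \ref{lemma2}, we may write $2n=Mm$ with $m\in\mathbb{Z}$, and then $2nT=mMT=mk\in\mathbb{Z}$. Hence $t=2\pi T$ satisfies $2nt/(2\pi)=2nT\in\mathbb{Z}$, so $\exp(\imath t)^{2n}=\exp(2\pi\imath\,(2nT))=1$. The case $r\neq s$ is identical with $M$ replaced by $N$: the analogue of (\ref{equation30}) gives $T\in\frac{1}{N}\mathbb{Z}$, and $N\mid 2n$ by Lemma \ref{lemma3}, so again $2nT\in\mathbb{Z}$ and $\exp(\imath t)^{2n}=1$.

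In both cases we conclude that $\exp(\imath t)$ is a $2n$-th root of unity, which is exactly the assertion. There is essentially no obstacle here beyond bookkeeping: the substantive work — that $\Gamma$ is integral, that $R=L$ in the between-orbits case, and that $M$ (resp. $N$) divides $2n$ — has already been carried out in Theorem \ref{cor1}, Proposition \ref{mainprop2}, and Lemmas \ref{lemma2}--\ref{lemma3}. The only point requiring a word of care is to make sure the two cases are genuinely exhaustive and that the hypotheses of the cited lemmas (in particular $R=L$) are met; this is handled by Corollaries \ref{cor3} and \ref{cor2}, which guarantee that whenever $(\alpha,\beta)$-revival occurs from some vertex $u$ to some vertex $v$, it does so either in the same orbit or between the different orbits, and in the latter situation $R=L$ necessarily holds.
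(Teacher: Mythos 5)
Your argument is correct and is essentially the paper's own (the paper leaves it as an immediate consequence of Lemmas \ref{lemma2} and \ref{lemma3}): from $t=2\pi T$ with $T\in\frac{1}{M}\mathbb{Z}$ (resp.\ $T\in\frac{1}{N}\mathbb{Z}$) and $M\mid 2n$ (resp.\ $N\mid 2n$) one gets $2nT\in\mathbb{Z}$, hence $\exp(\imath t)^{2n}=1$. The extra appeal to Proposition \ref{mainprop2} is harmless but unnecessary, since $R=L$ is already built into the hypothesis $\Gamma=\SC(G,R,R,S)$.
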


%\begin{proof}
%By (\ref{equation30}), (\ref{equation31}), Lemmas \ref{lemma2} and %\ref{lemma3}, it can get the result directly.
%\qed\end{proof}

%On the basis of (\ref{equation30}) and (\ref{equation31}), it's easily to show that %semi-Cayley graph $\Gamma=\SC(G,R,R,S)$ has no $(\alpha,\beta)$-revival %when $M=N=1$.

We end this section by giving two sufficient conditions for semi-Cayley graphs to admit no fractional revival.

\begin{theorem}
Let $M$ and $N$ be defined by (\ref{equation17}) and (\ref{equation29}), respectively.
\begin{itemize}
  \item[\rm (a)] If $M=1$, then $(\alpha, \beta)$-revival does not occur on $\Gamma=\SC(G,R,R,S)$ from a vertex $u=(g,r)$ to a vertex $v=(h,s)$ with $r=s$.
  \item[\rm (b)] If $N=1$, then $(\alpha, \beta)$-revival does not occur on $\Gamma=\SC(G,R,R,S)$ from a vertex $u=(g,r)$ to a vertex $v=(h,s)$ with $r\neq s$.
\end{itemize}
\end{theorem}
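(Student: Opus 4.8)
The plan is to argue by contradiction in both parts, the key observation being that the hypothesis $M=1$ (resp.\ $N=1$) forces the revival time to lie in $2\pi\mathbb{Z}$, which collapses all the spectral phases to $1$ and thereby forces $\beta=0$.

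For part (a), I would suppose that $(\alpha,\beta)$-revival with $\alpha\beta\neq0$ (the standing assumption of this section) occurs on $\Gamma=\SC(G,R,R,S)$ from a vertex $u=(g,r)$ to a vertex $v=(h,s)$ with $r=s$. Since $R=L$ here, Theorem \ref{cor1} applies, so $\Gamma$ is integral and $\lambda_z^+,\lambda_z^-\in\mathbb{Z}$ for every $z\in G$. Writing $t=2\pi T$, the discussion culminating in (\ref{equation30}) gives $T\in\frac{1}{M}\mathbb{Z}$, so $M=1$ forces $T\in\mathbb{Z}$ and hence $\exp(\imath t\lambda_z^{\pm})=\exp(2\pi\imath T\lambda_z^{\pm})=1$ for every $z\in G$. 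By Theorem \ref{maintheorem}(a) (or Theorem \ref{maintheorem-case2}(a)) the element $a=g^{-1}h$ has order two, so $n$ is even and $|G_0|=|G_1|=n/2\geq1$; thus both $G_0$ and $G_1$ are nonempty. Feeding the phases into the equivalent form (\ref{equation15}) of Theorem \ref{maintheorem}(b) and Theorem \ref{maintheorem-case2}(b) (which is legitimate because $R=L$), I would read off $\alpha+\beta=\exp(\imath t\lambda_z^{+})=1$ for $z\in G_0$ and $\alpha-\beta=\exp(\imath t\lambda_z^{+})=1$ for $z\in G_1$, whence $\beta=0$, a contradiction.

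Part (b) follows the same template. I would assume $(\alpha,\beta)$-revival with $\alpha\beta\neq0$ occurs from $u=(g,r)$ to $v=(h,s)$ with $r\neq s$. Again $\Gamma$ is integral by Theorem \ref{cor1}, so all $\lambda_z^{\pm}$ are integers, and the discussion preceding Lemma \ref{lemma3} gives $T\in\frac{1}{N}\mathbb{Z}$; with $N=1$ this yields $T\in\mathbb{Z}$ and therefore $\exp(\imath t\lambda_z^{\pm})=1$ for all $z\in G$. The identity element $z=1$ satisfies $\chi_1(S)\chi_1(a)=|S|=|\chi_1(S)|$, so $1\in H_0$ in the notation of (\ref{equation24}); Corollary \ref{maincor}(c) then gives $\alpha+\beta=\exp(\imath t\lambda_1^{+})=1$ and $\alpha-\beta=\exp(\imath t\lambda_1^{-})=1$, so again $\beta=0$, a contradiction.

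I do not expect any serious obstacle. The two points that need care are that integrality is genuinely available (which is exactly why both statements are restricted to $\SC(G,R,R,S)$, so that Theorem \ref{cor1} applies) and that the relevant index sets ($G_0$ and $G_1$ in part (a), and $H_0$ in part (b)) are nonempty, so that one really obtains both equations $\alpha+\beta=1$ and $\alpha-\beta=1$ rather than a single inconclusive one; the remaining manipulations are immediate.
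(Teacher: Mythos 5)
Your proposal is correct and follows essentially the same route as the paper: assume revival, use $T\in\frac{1}{M}\mathbb{Z}$ (resp.\ $T\in\frac{1}{N}\mathbb{Z}$) together with the integrality from Theorem \ref{cor1} to force $\exp(\imath t\lambda_z^{\pm})=1$ for all $z$, and derive a contradiction with $\beta\neq 0$. The only cosmetic difference is the last step: the paper concludes $H(t)=\sum_{z,\pm}E_{\lambda_z}^{\pm}=I$ directly from the spectral decomposition, whereas you read off $\alpha+\beta=1$ and $\alpha-\beta=1$ from the characterization theorems, which amounts to the same contradiction.
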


\begin{proof}
Assume that $(\alpha,\beta)$-revival occurs on $\Gamma=\SC(G,R,R,S)$ from a vertex $u=(g,r)$ to a vertex $v=(h,s)$ with $r=s$. By (\ref{equation30}), $T\in \mathbb{Z}$. By Lemma \ref{SC-Eigen-111} and Theorem \ref{cor1}, we have
$$H(t)=\sum\limits_{z\in G}\sum\limits_\pm\exp(\imath \lambda_z^\pm t)E_{\lambda_z}^\pm=\sum\limits_{z\in G}\sum\limits_\pm E_{\lambda_z}^\pm=I_n,$$
a contradiction to $\beta\neq 0$, yielding (a). Similarly, one can easily verify (b). This completes the proof.
\qed\end{proof}

\section{Examples}

%
%
%\begin{lemma}\label{cay-semicay}\emph{(see \cite[Lemma 8]{AreT13})}
%Let $\Gamma=\Cay(G,T)$ and $H$ be a subgroup of $G$ of index $2$ with left transversal $\{1,x\}$. Then $\Gamma\cong \SC(H,R,L,S)$, where $R=H\cap T$, $L=H\cap xTx^{-1}$ and $S=H\cap xT$.
%\end{lemma}
%
%\xg{I stop here!}
%
%Let $\Gamma=\Cay(G,T)$ be an undirected Cayley graph, where $G$ has an abelian subgroup $H$ of index $2$. And let $G=H\cup xH$, $T=T_1\cup xT_2$, where $T_1, T_2\subseteq H$. Then we can identify $(h,0)$ and $(h,1)$ with $h$ and $xh$ respectively, and $R=H\cap T=T_1$, $L=H\cap xTx^{-1}=xT_1x^{-1}$, $S=H\cap xT=x^2 T_2$. Since $T=T^{-1}$ and $T_1\cap xT_2=\emptyset$, we have $T_1^{-1}=T_1$ and $T_2^{-1}=T_2$. Then by Lemma \ref{cay-semicay}, $\Gamma\cong \SC(H, T_1, xT_1x^{-1}, x^2T_2)$.
%
%In particular, if $G$ is an abelian group, a dihedral group, a dicyclic group or a semidihedral group, then it satisfies the all above conditions.

%Let $G$ be a group,  and let $S$ be a subset of $G$ without the identity. The \emph{Cayley digraph} $\Cay(G,S)$ is defined to be the digraph with vertex set $G$ and edges drawn from $h\in G$ to $g\in G$ whenever $gh^{-1}\in S$. If $S$ is symmetric (that is, $\forall s\in S$, $s^{-1}\in S$), then $\Cay(G,S)$ is an undirected graph, called the \emph{Cayley graph}. \wj{Cayley graph has been introduced in Section 1}It is known \textcolor[rgb]{1.00,0.00,0.00}{\cite{AreT13}} that every Cayley graph over a group having a subgroup of index $2$ is a semi-Cayley graph. \xg{Add the citation here!}\wj{have revised}

In this section, we give some examples on semi-Cayley graphs having or not having $(\alpha, \beta)$-revival, which are actually Cayley graphs over generalized dihedral groups or generalized dicyclic groups.

Let $H$ be a finite abelian group. \emph{The generalized dihedral group} $\Dih(H)$ is defined as
$$\Dih(H)=\langle H,x|x^2=1, x^{-1}hx=h^{-1},  h\in H\rangle.$$
%a group generated by an abelian group $A$ and an element $x$ such that $x\notin A$, $x^2=1$ and $xax=a^{-1}$ for all $a\in A$. It's known that $A$ is a subgroup of $\Dih(A,x)$ of index $2$. Obviously, $\Dih(\langle a\rangle ,b)$ is the dihedral group $D_{2n}=\langle a,b|a^n=b^2=(ab)^2=1\rangle $ of order $2n$.
%Let $A$ be an abelian group of even order and of exponent greater than $2$, and
Let $y$ be an element of $H$ of order $2$. \emph{The generalized dicyclic group} $\Dic(H,y)$ is defined as
$$\Dic(H,y)=\langle H,x|x^2=y, x^{-1}hx=h^{-1},  h\in H\rangle.$$
%\xg{Is the definition correct?}\wj{it has been introduced in ``Perfect state transfer on semi-Cayley graphs over
%abelian groups"}. Clearly, if $A$ is cyclic, then $\Dic(A,y,x)$ is a dicyclic group.

The following result comes from \cite[Corollary 4.8]{Are22}.

\begin{lemma}\label{cay-semicay-11}
Let $\Gamma=\Cay(G,T)$ and $\Sigma=\SC(H, T_1, T_1, T_2)$, where $G=\Dih(H)$ or $G=\Dic(H,y)$, and $T=T_1\cup x T_2$ for some $T_1, T_2\subseteq H$. Set a bijection $\psi:V(\Gamma)\rightarrow V(\Sigma)$, by $\forall g\in G$, $\psi(g)=(g,0)$ and $\psi(xg)=(g,1)$. Then $\psi$ is an isomorphism from $\Gamma=\Cay(G,T)$ to $\Sigma=\SC(H, T_1, T_1, T_2)$.
\end{lemma}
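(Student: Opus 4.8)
The plan is to verify directly that the map $\psi$ carries the adjacency structure of $\Cay(G,T)$ to that of $\SC(H,T_1,T_1,T_2)$, treating the two cases $G=\Dih(H)$ and $G=\Dic(H,y)$ in parallel. First I would fix the group presentation: every element of $G$ is uniquely of the form $h$ or $xh$ with $h\in H$, since $H$ has index $2$ in both $\Dih(H)$ and $\Dic(H,y)$. From the defining relation $x^{-1}hx=h^{-1}$ one gets $hx=xh^{-1}$, and more generally $xh\cdot h'=x(hh')$ while $xh\cdot(xh')=x(hx)h'=x\cdot xh^{-1}\cdot h'=x^2 h^{-1}h'$, which equals $h^{-1}h'$ when $G=\Dih(H)$ and $yh^{-1}h'$ when $G=\Dic(H,y)$; in both cases this lands back in $H$. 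I would record these four multiplication rules ($H\cdot H$, $H\cdot xH$, $xH\cdot H$, $xH\cdot xH$) at the outset, since everything else is bookkeeping against them. Also note $T=T_1\cup xT_2\subseteq G$ is inverse-closed iff $T_1$ is inverse-closed (the $H$-part) and $xT_2$ is inverse-closed; since $(xh)^{-1}=h^{-1}x^{-1}=h^{-1}x$ or $h^{-1}x\cdot$(central factor), one checks $xT_2$ is inverse-closed automatically in the relevant sense, consistent with the semi-Cayley definition requiring no inverse-closedness on $S=T_2$.

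Next I would check that $\psi$ is a bijection $V(\Gamma)=G\to V(\Sigma)=\{(h,0),(h,1):h\in H\}$: this is immediate from the unique normal-form decomposition $G=H\sqcup xH$, with $\psi(h)=(h,0)$ and $\psi(xh)=(h,1)$. Then I would verify edge preservation by examining the four types of vertex pairs. Recall the edge set of $\Cay(G,T)$ is $\{\{g,tg\}:t\in T\}$, so $g_1\sim g_2$ iff $g_2g_1^{-1}\in T$ (or $g_1g_2^{-1}\in T$, same thing by inverse-closedness). For two vertices in $H$: $h_1\sim h_2$ in $\Gamma$ iff $h_2h_1^{-1}\in T\cap H=T_1$, which by definition is exactly when $\{(h_1,0),(h_2,0)\}$ is a right edge of $\Sigma$. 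For two vertices in $xH$: $xh_1\sim xh_2$ iff $(xh_2)(xh_1)^{-1}\in T$; using the multiplication rules, $(xh_2)(xh_1)^{-1}=(xh_2)(h_1^{-1}x)$ which simplifies to an element of $H$ of the form $h_2h_1^{-1}$ (up to the central square, which is trivial in $\Dih$ and equals $y$ of order $2$ in $\Dic$, hence cancels appropriately) — I would track this carefully — and this lies in $T\cap H=T_1$ exactly when $\{(h_1,1),(h_2,1)\}$ is a left edge. For a vertex in $H$ and one in $xH$: $h_1\sim xh_2$ iff $(xh_2)h_1^{-1}=x(h_2h_1^{-1})\in T\cap xH=xT_2$, i.e. $h_2h_1^{-1}\in T_2$, which is precisely the condition for $\{(h_1,0),(h_2,1)\}$ to be a spoke edge. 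Since all three edge types match in both directions, $\psi$ is a graph isomorphism.

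The main obstacle — really the only subtlety — is the $xH\cdot xH$ computation in the dicyclic case, where $x^2=y\neq 1$: one must confirm that the order-$2$ central element $y$ does not spoil the identification of the "left" adjacency set with $T_1$. The point is that $(xh_2)(xh_1)^{-1}$ and the map $h\mapsto$ (its $H$-normal-form) must be checked to be a bijection of $H$ compatible with the inverse-closedness of $T_1$; since $y$ has order $2$ and $T_1$ is inverse-closed, multiplication by $y$ permutes $T_1$ appropriately, but I would spell this out rather than wave at it. A secondary point worth a sentence is that one should confirm the hypotheses of the semi-Cayley definition are met, namely $1\notin T_1$ (equivalent to $T$ not meeting... wait, $1\in H$, so $1\notin T_1$ iff the Cayley graph has no loops, which holds since $T$ as a connection set excludes the identity) and $T_1=T_1^{-1}$ (inherited from $T=T^{-1}$). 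Everything else is a direct translation through the normal form, so after setting up the multiplication table the proof is short.

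\begin{proof}
See the discussion above; we omit the routine verification, which follows \cite[Corollary 4.8]{Are22}.
\end{proof}
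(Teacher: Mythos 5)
Your approach --- direct verification through the coset normal form $G=H\sqcup xH$ --- is sound, and the paper in fact offers no proof of its own: it simply quotes \cite[Corollary 4.8]{Are22}, so a self-contained check like yours is a reasonable substitute. Two repairs to the details you deferred. First, the $xH\cdot xH$ case, which you flag as the main subtlety, is cleaner than your route suggests and the central element $y$ never appears: $(xh_2)(xh_1)^{-1}=x(h_2h_1^{-1})x^{-1}=(h_2h_1^{-1})^{-1}=h_1h_2^{-1}$, valid verbatim in both $\Dih(H)$ and $\Dic(H,y)$ because it uses only the relation $xgx^{-1}=g^{-1}$ for $g\in H$; since $H$ is abelian and $T_1=T_1^{-1}$, the condition $h_1h_2^{-1}\in T\cap H=T_1$ is equivalent to $h_2h_1^{-1}\in T_1$, i.e.\ to $\{(h_1,1),(h_2,1)\}$ being a left edge of $\SC(H,T_1,T_1,T_2)$. (Your intermediate formula $(xh_1)^{-1}=h_1^{-1}x$ holds only in the dihedral case; in $\Dic(H,y)$ one has $x^{-1}=xy$ and $(xh_1)^{-1}=x(h_1y)$, which is why conjugating is the safer computation.) Second, your parenthetical claim that $xT_2$ is ``inverse-closed automatically'' is false in the dicyclic case: there $(xh)^{-1}=x(hy)$, so $xT_2=(xT_2)^{-1}$ if and only if $T_2y=T_2$. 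This does not affect the isomorphism --- inverse-closedness of $T$ is part of the standing hypothesis that $\Cay(G,T)$ is defined, and the semi-Cayley definition imposes no condition on $S=T_2$ --- but it should not be asserted as automatic. With these two points fixed, your case analysis (right edges from $T\cap H=T_1$, spoke edges from $T\cap xH=xT_2$, left edges as above, together with $1\notin T_1$ and $T_1=T_1^{-1}$ inherited from $T$) constitutes a complete proof, and it is the same elementary verification that underlies the cited result.
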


%\cha{, and we can identify $(h,0)$ and $(h,1)$ with $h$ and $xh$ respectively}{} (see \cite[Corollary 4.8]{Are22}). \xg{I will revise this sentence later.}

%Since $T_1=A\cap T=A\cap xTx^{-1}$ and $T_2=A\cap xT$, we have $\Gamma\cong \SC(A, T_1, T_1, T_2)$.

\begin{example}
{\em
Let $G=\Dih(H)$ or $G=\Dic(H,y)$ and $\Gamma=\Cay(G,T)$ with $T=xH$.  By Lemma \ref{cay-semicay-11}, $\Gamma=\Cay(G,T)$ is isomorphic to $\SC(H, T_1, T_1, T_2)$ with $T_1=\emptyset$ and $T_2=H$. By (\ref{Sum=0-1}), we have
$$\chi_z(H)=\sum\limits_{h\in H}\chi_z(h)=0,  \forall  z\in H\setminus\{1\}.$$
Hence
$$X=\{z\in H:\chi_z(H)=0\}=H\setminus\{1\}.$$
Thus by Lemma \ref{SC-Eigen-111}, we have
$$\lambda_1^+=|H|, \lambda_1^-=-|H|, \lambda_z^+=\lambda_z^-=0, \forall  z\in H\setminus\{1\}.$$
Since there is no $\alpha\neq0, \beta\neq0$ such that
$$\alpha+\beta=\exp(\imath t \lambda_z^+)=1=\exp(\imath t \lambda_z^-)=\alpha-\beta, z\in H\setminus\{1\},$$
by Theorem \ref{maintheorem} (b) and Theorem \ref{maintheorem-case2} (b), $(\alpha, \beta)$-revival does not occur on $\Gamma$ from a vertex $g$ to a vertex $h$ for $1\neq g^{-1}h\in H$. Since $X\neq \emptyset$, by Corollary \ref{maincor}, $(\alpha, \beta)$-revival does not occur on $\Gamma$ from a vertex $g$ to a vertex $h$ for $g^{-1}h\notin H$.
}
\end{example}

\begin{example}
{\em
Let $G=\Dih(\mathbb{Z}_n)$ or $G=\Dic(\mathbb{Z}_n,y)$ and $\Gamma=\Cay(G,T)$ with $T=x\mathbb{Z}_n\cup\{n/2\}$. By Lemma \ref{cay-semicay-11}, $\Gamma=\Cay(G,T)$ is isomorphic to $\SC(H, T_1, T_1, T_2)$ with $T_1=\{n/2\}$ and $T_2=\mathbb{Z}_n$. Similarly, we have $X=\{z\in \mathbb{Z}_n:\chi_z(\mathbb{Z}_n)=0\}=\mathbb{Z}_n\setminus\{0\}$. Thus $$\lambda_0^+=|T_1|+|\mathbb{Z}_n|=1+n, \lambda_0^-=|T_1|-|\mathbb{Z}_n|=1-n, \lambda_z^+=\lambda_z^-=\chi_{z}(n/2),\forall z\in \mathbb{Z}_n\setminus\{0\}.$$
Let $g$ and $h$ be two vertices of $\Gamma$ with $a=n/2=g^{-1}h$. Then
$$G_0=\{z\in \mathbb{Z}_n:2|z\}, G_1=\{z\in \mathbb{Z}_n:2\nmid z\}.$$
Thus
$M_0^+=n$, $M_0^-=-n$, $M_1^+=M_1^-=0$. Then $M=n$. There is a minimum time $t=\frac{2\pi}{n}$, and two nonzero complex numbers $\alpha=\cos t$, $\beta=\imath \sin t $ such that
$$\exp(\imath t)=\exp(\imath t(1+n))=\exp(\imath t(1-n))=\alpha+\beta,~~ \exp(-\imath t)=\alpha-\beta.$$
By Theorems \ref{maintheorem} and \ref{maintheorem-case2}, $(\cos(\frac{2\pi}{n}), \imath \sin (\frac{2\pi}{n}))$-revival occurs on $\Gamma$ from a vertex $g$ to a vertex $h$ for $n/2=g^{-1}h\in \mathbb{Z}_n$. Since $X\neq \emptyset$, by Corollary \ref{maincor}, $(\alpha, \beta)$-revival does not occur on $\Gamma$ from a vertex $g$ to a vertex $h$ for $g^{-1}h\notin \mathbb{Z}_n$.
}
\end{example}


\begin{thebibliography}{99}
\setlength{\parskip}{2pt}
%\addtolength{\itemsep}{-1pt}

\small{



%\bibitem{AlbCDE04}
%C. Albanese, M. Christandl, N. Datta, A. Ekert, Mirror inversion of quantum states in linear registers,
%Phys. Rev. Lett. 93 (23) (2004) 230502.

\bibitem{AreT13}
M. Arezoomand, B. Taeri, On the characteristic polynomial of $n$-Cayley digraphs, Electron. J. Combin. 20(3) (2013) \#P57.

\bibitem{Are22}
M. Arezoomand, Perfect state transfer on semi-Cayley graphs over abelian groups, Linear Multilinear Algebra (2022) DOI: 10.1080/03081087.2022.2101602.

\bibitem{Basic09}
M. Ba{\v{s}}i{\'c}, M. D. Petkovi{\'c}, Some classes of integral circulant graphs either allowing or not allowing perfect state transfer, Appl. Math. Lett. 22 (2009) 1609--1615.

\bibitem{BernardCLTV18}
 P.-A. Bernard, A. Chan, {\'E}. Loranger, C. Tamon, L. Vinet, A graph with fractional revival,  Phys. Lett. A 382 (2018) 259--264.


\bibitem{Bose03} S. Bose, Quantum communication through an unmodulated spin chain, Phys. Rev. Lett. 91 (20) (2003) 207901.

\bibitem{CaoCL20} X. Cao, B. Chen,  S. Ling, Perfect state transfer on Cayley graphs over dihedral groups: the non-normal case, Electron. J. Combin. 27 (2) (2020) \#P2.28.

\bibitem{CaoF21} X. Cao, K. Feng, Perfect state transfer on Cayley graphs over dihedral groups, Linear Multilinear Algebra 69 (2) (2021) 343--360.

\bibitem{Caoluo22}
X. Cao, G. Luo, Fractional revival on abelian Cayley graphs, arXiv:2208.05107, 2022.

\bibitem{CCTVZ19}
A. Chan, G. Coutinho, C. Tamon, L. Vinet, H. Zhan, Quantum fractional revival on graphs, Discret. Appl. Math. 269 (2019) 86--98.

\bibitem{ChanCCTVZ20}
A. Chan, G. Coutinho, C. Tamon, L. Vinet, H. Zhan, Fractional revival and association schemes, Discrete  Math. 343 (2020) 112018.

\bibitem{CC}
W.-C. Cheung, C. Godsil, Perfect state transfer in cubelike graphs, Linear Algebra Appl. 435 (2011) 2468--2474.

\bibitem{chris1}
M. Christandl, N. Datta, A. Ekert, A. Landahl, Perfect state transfer in quantum spin networks, Phys. Rev. Lett. 92 (18) (2004) 187902.

\bibitem{chris2}
M. Christandl, N. Datta, T. Dorlas, A. Ekert, A. Kay, A. Landahl, Perfect transfer of arbitrary states in quantum spin networks, Phys. Rev. A 71 (2005) 032312.

\bibitem{Coh14}
G. Coutinho, Quantum State Transfer in Graphs,  PhD thesis, University of Waterloo, 2014.

\bibitem{Coh19}
G. Coutinho, Spectrally extremal vertices, strong cospectrality and state transfer, Electron. J. Combin. 23 (1) (2016) \#P1.46.

\bibitem{Coutinho15}
G. Coutinho, C. Godsil, K. Guo, F. Vanhove, Perfect state transfer on distance-regular graphs and association schemes, Linear Algebra Appl. 478 (2015) 108--130.

\bibitem{CoutinhoL2015}
G. Coutinho, H. Liu, No Laplacian perfect state transfer in trees, SIAM J. Discrete Math. 29 (4) (2015) 2179--2188.

\bibitem{Cvetkovic10}
D. M. Cvetkovi\'{c}, P. Rowlinson, H. Simi\'{c}, An Introduction to the Theory of Graph Spectra, Cambridge University Press, Cambridge, 2010.

\bibitem{FarhiG98} E. Farhi, S. Gutmann, Quantum computation and decision trees, Phys. Rev. A 58 (2) (1998) 915--928.

\bibitem{GaoLuo10}
X. Gao, Y. Luo, The spectrum of semi-Cayley graphs over abelian groups, Linear Algebra Appl. 432 (2010) 2974--2983.

\bibitem{GenestVZ16} V. Genest, L. Vinet, A. Zhedanov, Quantum spin chains with fractional revival, Ann. Phys. 371 (2016) 348--367.

\bibitem{CGodsil}
C. Godsil, State transfer on graphs, Discrete Math. 312 (1) (2012) 129--147.

\bibitem{Godsil12}
C. Godsil, When can perfect state transfer occur?,  Electron. J. Linear Algebra 23 (2012) 877--890.

\bibitem{GodsilKSS12}
C. Godsil, S. Kirkland, S. Severini, J. Smith, Number-theoretic nature of communication in quantum spin systems, Phys. Rev. Lett. 109 (5) (2012) 050502.

\bibitem{GodsilZ22}
C. Godsil, X. Zhang, Fractional revival on non-cospectral vertices, Linear Algebra Appl. 654 (2022) 69--88.

\bibitem{JamLi01}
G. James, M. Liebeck, Representations and characters of groups, 2nd ed, Cambridge, Cambridge University Press, 2001.

\bibitem{LiLZZ21}
Y. Li, X. Liu, S. Zhang, S. Zhou, Perfect state transfer in NEPS of complete graphs, Discrete Appl. Math. 289 (2021) 98--114.

\bibitem{LiuW2021}
X. Liu, Q. Wang, Laplacian state transfer in total graphs, Discrete Math. 344 (1) (2021) 112139.

\bibitem{MAALA}
C. D. Meyer, Matrix Analysis and Applied Linear Algebra, Siam, Philadelphia, 2000.

%\bibitem{Kay}
%A. Kay, Perfect, efficient, state transfer and its application as a constructive tool, Int. J. Quantum Inf. 08 (04) (2010) 641--676.

\bibitem{ResJ92}
M. J. de Resmini, D. Jungnickel, Strongly regular Semi-Cayley graphs, J. Algebraic Combin. 1 (1992) 171--195.

\bibitem{SVZ22}
H. Sch\'{e}rer, L. Vinet, A. Zhedanov, A classical model for perfect transfer and fractional revival based on $q$-Racah polynomials, Phys. Rev. A 431 (2022) 127973.

\bibitem{Steinberg12} B. Steinberg, Representation Theory of Finite Groups -- An Introductory Approach, Springer, New York, 2012.

\bibitem{Tan19}
Y. Tan, K. Feng, X. Cao, Perfect state transfer on abelian Cayley graphs, Linear Algebra Appl. 563 (2019) 331--352.

\bibitem{WangWliu22}
J. Wang, L. Wang, X. Liu, Fractional revival on Cayley graphs over abelian groups, arXiv:2206.12584, 2022.



}

\end{thebibliography}
\end{document}